\documentclass[reqno, 10pt]{amsart}
\usepackage{upgreek}
\usepackage{mathtools}
\usepackage{amssymb}
\usepackage{mathrsfs}
\usepackage{amsthm}
\numberwithin{equation}{section} 

\usepackage{times}

\usepackage{soul}

\usepackage{textcomp}

\usepackage[scr=boondox]{mathalpha}
\usepackage{xcolor}
\usepackage{srcltx}
\usepackage{thmtools}
\usepackage[left=1in, right=1in, top=1.25in, bottom=1.25in]{geometry}

\usepackage{tikz}
\usetikzlibrary{shapes,arrows,arrows.meta}

\usepackage{multicol}
\usepackage{graphicx}
\usepackage[colorlinks,allcolors=blue!75!black,urlcolor=black]{hyperref}
\usepackage{cancel}

\makeatletter
\newcommand{\leqnomode}{\tagsleft@true\let\veqno\@@leqno}
\makeatother

\allowdisplaybreaks

\newcommand{\bbC}{\mathbb{C}}

\newcommand{\bbL}{\mathbb{L}}

\newcommand{\bbV}{\mathbb{V}}

\def\bA{\mathbf{A}}
\def\bB{\mathbf{B}}
\def\bC{\mathbf{C}}

\def\bH{\mathbf{H}}
\def\bI{\mathbf{I}}
\def\bL{\mathbf{L}}
\def\bM{\mathbf{M}}
\def\bN{\mathbf{N}}
\def\bP{\mathbf{P}}

\def\bV{\mathbf{V}}
\def\bW{\mathbf{W}}

\def\bb{\mathbf{b}}

\def\be{\mathbf{e}}
\def\bg{\mathbf{g}}
\def\bh{\mathbf{h}}
\def\bj{\mathbf{j}}

\def\bu{\mathbf{u}}
\def\bv{\mathbf{v}}

\def\bx{\mathbf{x}}
\def\by{\mathbf{y}}

\renewcommand{\bf}{\mathbf{f}}

\def\bchi{\boldsymbol{\chi}}

\def\bvarphi{\boldsymbol{\varphi}}

\def\bnu{\boldsymbol{\nu}}
\def\bxi{\boldsymbol{\xi}}

\def\bsfA{\boldsymbol{\mathsf{A}}}
\def\bsfb{\boldsymbol{\mathsf{b}}}
\def\bsfc{\boldsymbol{\mathsf{c}}}
\def\sfd{\mathsf{d}}

\newcommand{\wh}{\widehat}
\newcommand{\wt}{\widetilde}

\newcommand{\LL}{\mathrm{L}}
\newcommand{\HH}{\mathrm{H}}
\newcommand{\NN}{\mathrm{N}}
\newcommand{\CC}{\mathrm{C}}
\newcommand{\VV}{\mathrm{V}}
\newcommand{\RR}{\mathrm{R}}
\newcommand{\WW}{\mathrm{W}}

\newcommand{\cJ}{\mathcal{J}}

\newcommand{\cU}{\mathcal{U}}
\newcommand{\cV}{\mathcal{V}}
\newcommand{\cW}{\mathcal{W}}
\newcommand{\cX}{\mathcal{X}}

\newcommand{\sL}{\mathscr{L}}
\newcommand{\sG}{\mathscr{G}}
\newcommand{\sU}{\mathscr{U}}
\newcommand{\sV}{\mathscr{V}}

\newcommand{\dd}[1][y]{\if#1y\,\fi{\mathrm d}}

\renewcommand{\div}{\operatorname{div}}

\newcommand{\tr}{\operatorname*{tr}}

\newcommand{\G}{\Gamma}
\newcommand{\nablaG}{\nabla_\G}
\newcommand{\nablaM}{\nabla_M}
\newcommand{\divG}{\div_\G}
\newcommand{\divM}{\div_M}
\newcommand{\DeltaG}{\Delta_\G}

\newcommand{\uD}{\underline{D}}

\theoremstyle{plain}

\newtheorem{thm}{Theorem}[section]
\newtheorem{proposition}[thm]{Proposition}
\newtheorem{lemma}[thm]{Lemma}
\newtheorem{corollary}[thm]{Corollary}
\newtheorem{definition}[thm]{Definition}
\newtheorem{remark}[thm]{Remark}

\theoremstyle{definition}

\theoremstyle{remark}

\def\itemautorefname~#1\null{%
  (#1)\null
}

\newenvironment{proof-thm}[1]{\vskip1em \noindent\textbf{Proof of \autoref#1.}}{\hfill$\square$\\}

\makeatletter
\renewcommand\subsection{\@startsection{subsection}{2}%
  \z@{.5\linespacing\@plus.7\linespacing}{.1\linespacing}%
  {\normalfont\centering\scshape}}
\makeatother

\makeatletter
\renewcommand\subsubsection{\@startsection{subsubsection}{3}%
  \z@{.5\linespacing\@plus.7\linespacing}{.1\linespacing}%
  {\normalfont\raggedright\scshape}}
\makeatother

\makeatletter
\def\@tocline#1#2#3#4#5#6#7{\relax
  \ifnum #1>\c@tocdepth 
  \else
    \par \addpenalty\@secpenalty\addvspace{#2}%
    \begingroup \hyphenpenalty\@M
    \@ifempty{#4}{%
      \@tempdima\csname r@tocindent\number#1\endcsname\relax
    }{%
      \@tempdima#4\relax
    }%
    \parindent\z@ \leftskip#3\relax \advance\leftskip\@tempdima\relax
    \rightskip\@pnumwidth plus4em \parfillskip-\@pnumwidth
    #5\leavevmode\hskip-\@tempdima
      \ifcase #1
       \or\or \hskip 1em \or \hskip 2em \else \hskip 3em \fi%
      #6\nobreak\relax
    \dotfill\hbox to\@pnumwidth{\@tocpagenum{#7}}\par
    \nobreak
    \endgroup
  \fi}
\makeatother

\begin{document}
\title[$\LL^p$-based Scalar Elliptic Theory on closed minimally regular manifolds]{$\LL^p$-based Sobolev theory on closed manifolds of minimal regularity:\\ Scalar Elliptic Equations}

\author[G.A.~Benavides]{Gonzalo A.~Benavides}

\author[R.H.~Nochetto]{Ricardo H.~Nochetto}

\author[M.~Shakipov]{Mansur Shakipov}

\address{Department of Mathematics, University of Maryland College Park, MD 20742}
\email{gonzalob@umd.edu, rhn@umd.edu, shakipov@umd.edu}

\subjclass[2020]{35A15, 35B65, 35D30, 58J, 35J20, 47B}
\keywords{PDEs on manifolds, low-regularity manifold, Sobolev regularity, well-posedness, duality, Banach--Ne\v{c}as--Babu\v{s}ka, Babu\v{s}ka--Brezzi, inf-sup, Fredholm alternative}

\begin{abstract}
This paper and its follow-up \cite{BenavidesNochettoShakipov2025-b} are concerned with the well-posedness and $\LL^p$-based Sobolev regularity for appropriate weak formulations of a family of prototypical PDEs posed on manifolds of minimal regularity.
In particular, the domains are assumed to be compact, connected $d$-dimensional manifolds without boundary of class $C^k$ and $C^{k-1,1}$ ($k \geq 1$) embedded in $\RR^{d+1}$.
The focus of this program is on the $\LL^p$-based theory that is sharp with respect to the regularity of the source terms and the manifold.
In the present paper, we focus our attention on the case of general scalar elliptic problems.
We first establish $\LL^p$-based well-posedness and higher regularity for the purely diffusive problems with variable coefficients by localizing and rewriting these equations in flat domains to employ the Calder\'{o}n--Zygmund theory, combined with duality arguments. We then invoke the Fredholm alternative to derive analogous results for general scalar elliptic problems, underscoring the subtle differences that the geometric setting entails compared to the theory in flat domains.
\end{abstract}

\maketitle
\tableofcontents

\section{Introduction}\label{sec:intro}

Let $\G$ be a $d$-dimensional ($d \geq 2$) compact Lipschitz manifold without boundary embedded in $\RR^{d+1}$ and denote by $\bnu = (\nu_i)_{i=1}^{d+1}$ its outward unit normal.
Throughout this work, we are concerned with developing $\LL^p$-based well-posedness and regularity theory for the \textit{general scalar elliptic problem} posed on $\G$:
\begin{equation}\label{eq:gsep}
- \divG (\bsfA \nablaG u + u\, \bsfb ) + \bsfc \cdot \nablaG u + \sfd u = f,
\end{equation}
where $f$ is a given scalar field, and $\bsfA = (A_{ij})_{i,j=1}^{(d+1) \times (d+1)}$, $\bsfb = (b_i)_{i=1}^{d+1}$, $\bsfc = (c_i)_{i=1}^{d+1}$ and $\sfd$ are given coefficients.
The operators $\nablaG$ and $\divG$ respectively denote the \emph{covariant gradient} and \emph{covariant divergence}; see \autoref{sec:diff-geo}.
The unknown $u$ in \eqref{eq:gsep} is a scalar function $u:\G \rightarrow \RR$.
For $(\bsfA, \bsfb, \bsfc, \sfd) = (\bI, \mathbf{0}, \mathbf{0},0)$, problem \eqref{eq:gsep} reduces to the \textit{Laplace--Beltrami} problem
\begin{equation} \label{eq:lb}
-\DeltaG u = f,
\end{equation}
where $\DeltaG := \divG \nablaG$ is called the Laplace--Beltrami operator.

Problem \eqref{eq:gsep} is of fundamental importance as a prototypical constituent of the modeling of a great variety of physical phenomena on thin films, and in the analysis of their underlying PDEs.
Relevant applications of the general scalar elliptic problem include surfactant modeling \cite{surfactant, surfactant-2}, modeling of two-phase geometric biomembranes \cite{ElliottStinner2010-2}, diffusion-induced grain boundary motion \cite{grainBoundaryMotion, grainBoundaryMotion-2}, and color image denoising \cite{BeltramiDenoising}, to name a few.
Beyond modeling and applications, \eqref{eq:gsep} is essential in the analysis of evolving-surface parabolic problems \cite{KovacsLi2022}, wherein the PDE posed on a moving surface $\G(t)$ is pulled back to the initial surface $\G(0)$, thereby yielding an elliptic term with a diffusion matrix $\bsfA(t)$ that depends on $\G(t)$.

The purpose of this work is threefold:
\begin{itemize}
\item to provide a careful review of various definitions of Sobolev spaces defined on $\G$ that is less regular than $C^2$, and establishing the conditions for them to coincide.
\item to establish the $\LL^2$- and $\LL^p$-based well-posedness of \eqref{eq:gsep} for $\G$ of class $C^{0,1}$ and $C^1$, respectively, under mild assumptions on the coefficients $(\bsfA, \bsfb, \bsfc,\sfd)$.
\item to establish the $\WW^{m,p}$-regularity of solutions of \eqref{eq:gsep} for $\G$ of class $C^{m-1,1}$ under minimal assumptions on the regularity of the coefficients $(\bsfA, \bsfb, \bsfc,\sfd)$.
\end{itemize}

We stress that, in contrast to the typical approach in differential geometry, which assumes $\G$ to be of class $C^\infty$,
we specify the minimal regularity of $\G$ for each statement to hold.
Moreover, since our exposition does not require strong foundations in Riemannian geometry, it may thus serve as  a bridge for works in applied fields.
The results in the present paper are particularly relevant to the numerical analysis community, for which the assumption that the domain is infinitely smooth is overly restrictive and usually unrealistic.
Moreover, building strongly upon the present work we establish in our companion paper \cite{BenavidesNochettoShakipov2025-b} well-posedness and regularity results for a group of prototypical vector-valued hypersurface PDEs, such as the tangent Stokes and Navier--Stokes equations.

A relatively straightforward sanity check consisting of
localizing and rewriting \eqref{eq:gsep} into parametric domain in order to apply the classical PDE theory in flat domains shows that our main results (see \autoref{subsec:main-results} below) are by no means unexpected;
in fact, a well-acquainted analyst might regard them as part of the ``mathematical folklore''.
This reasoning, however, is at least incomplete because it starts from the presumption that a solution to \eqref{eq:gsep} exists, which is not evident for general coefficients $(\bsfA, \bsfb, \bsfc, \sfd)$ and integrability parameter $p \in (1,\infty)$.
Furthermore, despite a comprehensive review of the literature, we were unable to identify any explicit reference for our main results.
Consequently, to the best of our knowledge, this work is the first to provide a unified, self-contained and elementary theory for problem \eqref{eq:gsep} that remains sharp with respect to the regularity of $\G$.
This is the content of \autoref{sec:scal-ell}.

Finally, we note that there exist numerous definitions of Sobolev spaces on manifolds, either in the sense of ``H'' and ``W'' of \cite{H=W} and relying on parametrizations or written in terms of extrinsic calculus, e.g.~\cite[Def.~2.11]{DziukElliott2013}, \cite[\textsection 3]{JerisonKenig1995}, \cite[p.~2]{TaylorIII_2023}, \cite[\textsection 7.1]{Grigoryan2009}, \cite[\textsection 2]{HebeyRobert2008} and \cite[p.~76]{Morrey2008}.
It can be shown, with rather nontrivial arguments, that under suitable assumptions some of these definitions coincide \cite{MathStack-equivalent-Sobolev-manifold}.
We use many of them in this and the companion paper \cite{BenavidesNochettoShakipov2025-b}, depending on the context and regularity of $\G$.
Therefore, in keeping with our emphasis on clarity and self-containment, we include as part of our program a systematic review of Sobolev spaces on hypersurfaces of limited regularity in \autoref{sec:Sobolev-spaces-manifolds}.
\subsection{Preliminary notation}

For $\Omega \subseteq \RR^d$ a bounded Lipschitz domain, we adopt the standard notation for Lebesgue spaces $\LL^q(\Omega)$ and Sobolev spaces $\WW^{m,q}(\Omega)$ and $\WW^{m,q}_0(\Omega)$ with $m \geq 0$ and $q \in [1,\infty)$, whose corresponding norms are denoted respectively by $\|\cdot\|_{0,q;\Omega}$ and $\|\cdot\|_{m,q;\Omega}$.
We also write $\WW^{0,q}(\Omega) = \LL^q(\Omega)$ and $\HH^m(\Omega) := \WW^{m,2}(\Omega)$ with corresponding norm $\|\cdot\|_{m,\Omega} := \|\cdot\|_{m,2;\Omega}$
For any scalar function space $\VV$ defined on $\Omega$, we denote by $\bV$ and $\bbV$ its $d$-vectorial and $d \times d$ tensorial counterparts, and we use the same notation to denote their norms.

We will use a similar notation for the \emph{manifold Sobolev spaces} $\WW^{m,q}(\G)$ defined on $\G$ (these spaces will be properly introduced in \autoref{sec:Sobolev-spaces-manifolds} below), and for the spaces $C^m(\G)$ of $m$-th times continuously differentiable functions on $\G$.
For any scalar function space $\VV$ defined on $\G$, we denote by $\bV$ and $\bbV$ its $(d+1)$-vectorial and $(d+1)\times(d+1)$ tensorial counterparts, and use the same notation for their respective norms.
Moreover, we denote by $\WW^{m,q}_\#(\G)$ the closed subset of scalar-valued functions in $\WW^{m,q}(\G)$ that satisfy $\int_\G v =0$, whereas $\bW^{m,q}_t(\G)$ denotes the space of vector-valued functions $\bv: \G \rightarrow \RR^{d+1}$ that belong to $\bW^{m,q}(\G)$ and are a.e.~tangential to $\G$ (i.e.~$\bv \cdot \bnu = 0$).
Finally, for any normed space $V$, its dual space is denoted by $V'$, whose induced norm is given by $\|f\|_{V'} := \sup_{0 \neq v \in V} \frac{f(v)}{\|v\|_V}$.
We denote the duality pairing between $V'$ and $V$ by $\langle \cdot, \cdot \rangle_{V' \times V}$;
when there is no ambiguity, we will simply write $\langle \cdot,\cdot \rangle$.
Throughout this article, for any integrability parameter $p \in (1,\infty)$, we denote by $p^* := \frac{p}{p-1}$ its conjugate exponent.
Notice that $p^{**} = p$.
Also, $1<p<2$ if and only if $p^*>2$.

\subsection{Background}

The differential operators in \eqref{eq:gsep} satisfy integration-by-parts formulas that highly resemble their flat-domain counterparts \cite[\textsection 2]{DziukElliott2013} (see also \cite[Lemma 1]{BonitoDemlowNochetto2020}).
Said identities naturally induce a set of $\LL^2$-based weak formulations that we now proceed to review.
As far as we know, most of the literature is concerned with the Laplace--Beltrami equation \eqref{eq:lb} rather than the general linear elliptic PDE \eqref{eq:gsep}.

The weak formulation for \eqref{eq:lb}  reads:
given $f \in (\HH^1_\#(\G))'$, find $u \in \HH^1_\#(\G)$ such that
\begin{equation}\label{eq:weak-Laplace--Beltrami-Poisson}
\int_\G \nablaG u \cdot \nablaG v = \langle f , v \rangle, \qquad \forall v \in \HH^1_\#(\G).
\end{equation}
For $\G$ of class $C^{0,1}$ and of any intrinsic dimension $d \geq 2$, the well-posedness of \eqref{eq:weak-Laplace--Beltrami-Poisson} is a consequence of the \emph{Poincaré--Friedrichs} inequality \cite[Lemma 2]{BonitoDemlowNochetto2020} (see also \cite[Theorem 2.12]{DziukElliott2013} for the result for $\G$ of class $C^3$) and the Lax--Milgram theorem.

In flat bounded domains, a-priori estimates in general $\WW^{m,p}$-norms for the scalar Laplace problem are a consequence of the Calderon--Zygmund theory applied to the integral representation of solutions via the Newtonian potential.
For the case $m \geq 2$ we refer to \cite[Chapter 9]{GilbargTrudinger2001} and for the case $m=1$ we refer to \cite[Section 3]{DolzmannMuller1995} (see also and \cite[Section 4, Theorem 1]{Meyers1963} and \cite[Theorem 1.5]{ByunWang2004}).
We exploit this theory in our paper via local parametrizations.
For the vector-valued case \cite{BenavidesNochettoShakipov2025-b}, instead, we adopt a parametrization-free approach that relies on the estimates derived in \autoref{sec:scal-ell} for problems \eqref{eq:gsep} and \eqref{eq:lb}.

Regarding the higher Sobolev regularity of \eqref{eq:lb}, we refer to \cite[Lemma 3]{BonitoDemlowNochetto2020} and \cite[Theorem 3.3]{DziukElliott2013}, which provide\st{s} $\HH^2$-regularity of the solution under the assumption that the right-hand side function $f$ lies in $\LL^2(\G)$ and $\G$ is of class $C^2$.
The proof hinges on converting \eqref{eq:weak-Laplace--Beltrami-Poisson} into a general elliptic PDE over a flat parametric domain and exploiting classical $\HH^2$-interior regularity results, e.g.~\cite[p.~327, Theorem 1]{Evans2010}.

In the context of differential forms, Hodge theory \cite[\textsection 7.4]{Morrey2008} provides $\WW^{2,p}$-regularity ($1 < p < \infty$) for solutions of the Laplace--Beltrami problem in manifolds of class $C^{1,1}$; for $0$-forms, i.e.~scalar functions, the Hodge Laplacian coincides with the Laplace--Beltrami operator.
As far as we are aware, there seem to be no readily available references for regularity results for more general systems of the form \eqref{eq:gsep}.

Motivated by the lack of a comprehensive regularity theory for the general problem \eqref{eq:gsep}, this paper aims to provide a self-contained reference that is sharp on the regularity of $\G$ and data.

\subsection{Statement of the main results}\label{subsec:main-results}
We present our main results concerning well-posedness and higher regularity of solutions to \eqref{eq:gsep}. 
The first two results, \autoref{thm:div-form-wp} and \autoref{thm:div-form-higreg}, establish the well-posedness and higher Sobolev regularity of elliptic problems in divergence form.
These results are the building blocks needed for tackling the general scalar elliptic problems, and they do not follow from the latter because of the zero-mean condition.
Their proofs resort to parametrizations in order to utilize the powerful Calder\'{o}n--Zygmund theory in flat domains, and functional analytical duality arguments.
At the heart of these developments is the observation that the regularity of the Riemannian metric and the coefficient matrix $\bsfA$ intertwine and give rise to the regularity of the coefficient matrix appearing in parametric domain.

\begin{thm}[well-posedness of the operator $-\divG(\bsfA \nablaG \cdot)$ in $\WW^{1,p}_\#(\G)$]
\label{thm:div-form-wp}
Let $p \in (1,\infty)$.
Assume $\G$ is of class $C^1$ and let $\bsfA \in \bbC(\G)$ and that $\bsfA$ is strictly elliptic in $\G$;
that is, there is $\Lambda > 0$ such that
\begin{equation}\label{A-ellipticity-Gamma}
\bxi \cdot \bsfA(\bx) \bxi \geq \Lambda |\bxi|^2,
\end{equation}
for a.e.~$\bx \in \G$ and for each $\bxi \in \RR^{d+1}$ that is tangential to $\G$ at the point $\bx$.

Then for each $f \in (\WW^{1,p^*}_\#(\G))'$, there is a unique $u \in \WW^{1,p}_\#(\G)$ such that
\begin{equation}\label{eq:div-form-W1p-weakform}
\int_\G \bsfA \nablaG u \cdot \nablaG v = \langle f, v \rangle, \qquad \forall v \in \WW^{1,p^*}_\#(\G).
\end{equation}
Moreover, there exists a positive constant $C$ depending only on $\G$, $\bsfA$ and $p$ such that
\begin{equation}\label{eq:div-form-W1p-apriori}
\|u\|_{1,p;\G} \leq C \|f\|_{(\WW^{1,p^*}_\#(\G))'}.
\end{equation}
If $\G$ and $\bsfA$ are respectively only of class $C^{0,1}$ and $\bbL^\infty(\G)$, there exists $\varepsilon>0$, depending only on $\G$ and $\bsfA$, such that \eqref{eq:div-form-W1p-weakform} and \eqref{eq:div-form-W1p-apriori} still hold as long as $p \in (2-\varepsilon,2+\varepsilon)$. 
\end{thm}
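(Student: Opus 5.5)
The plan is to reduce to a priori estimates in flat parametric domains and close the argument with a duality/inf-sup (Banach--Ne\v{c}as--Babu\v{s}ka) argument. Set $a(u,v) := \int_\G \bsfA \nablaG u \cdot \nablaG v$ on $\WW^{1,p}_\#(\G)\times \WW^{1,p^*}_\#(\G)$. It suffices to prove two inf-sup conditions:
\[
\|u\|_{1,p;\G} \le C \sup_{0\ne v\in \WW^{1,p^*}_\#(\G)} \frac{a(u,v)}{\|v\|_{1,p^*;\G}},
\]
together with the same statement with $p$ replaced by $p^*$ and the roles of $u,v$ swapped (i.e.\ for $\bsfA^T$). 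Indeed, the $p^*$-version gives injectivity of the adjoint, hence existence, while the $p$-version gives uniqueness and \eqref{eq:div-form-W1p-apriori}. Since $\bsfA^T$ satisfies the same hypotheses, both conditions follow from one a priori estimate valid for all $p\in(1,\infty)$. For $p=2$ this is just Lax--Milgram, using \eqref{A-ellipticity-Gamma}, the fact that $\nablaG u$ is tangential, and the Poincar\'e--Friedrichs inequality.

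\emph{A priori estimate for $p>2$.} Let $u\in \WW^{1,p}_\#\subset\HH^1_\#$ with $a(u,\cdot)=f$. Cover $\G$ by finitely many $C^1$ charts $\chi_i:\Omega_i\to\G$ and take a subordinate partition of unity $\{\phi_i\}$; since $\G$ is only $C^1$, the $\phi_i$ should be mollified functions in parametric coordinates or restrictions of smooth ambient cutoffs. Then $w_i := (\phi_i u)\circ\chi_i$ solves a flat divergence-form problem
\[
-\div(\bA_i\nabla w_i) = F_i - \div \bG_i \quad\text{in } \Omega_i,
\]
where $\bA_i = \sqrt{g}\,g^{-1}(D\chi_i)^T \bsfA\, D\chi_i\, g^{-1}$ with $g$ the metric. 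This coefficient is continuous because $\chi_i\in C^1$ and $\bsfA$ is continuous, and it is uniformly elliptic by \eqref{A-ellipticity-Gamma}. The right-hand side consists of $f$ tested against $\phi_i v$, plus lower-order terms involving $u$ and $\nablaG\phi_i$. The $\WW^{1,p}$ Calder\'on--Zygmund theory for continuous coefficients \cite{DolzmannMuller1995,ByunWang2004} then gives
\[
\|w_i\|_{1,p} \le C\bigl(\|f\|_{(\WW^{1,p^*}_\#)'} + \|u\|_{0,p;\G}\bigr),
\]
after handling the mismatch between test functions in $\WW^{1,p^*}_0(\Omega_i)$ and mean-zero test functions on $\G$ by subtracting a constant. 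Summing over $i$ yields $\|u\|_{1,p}\le C(\|f\| + \|u\|_{0,p})$.

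\emph{Removing the lower-order term.} This is where I expect the main obstacle, since a G\aa rding-type inequality alone does not give the estimate. I would first try compactness: suppose $u_n$ with $\|u_n\|_{1,p}=1$ and $\|f_n\|\to0$. Rellich on $\G$, via charts, gives $u_n\to u$ in $\LL^p$, and the bound above makes the sequence Cauchy in $\WW^{1,p}$. The limit satisfies $a(u,\cdot)=0$, so by $p=2$ uniqueness (valid since $p>2$ implies $u\in\HH^1_\#$) $u=0$, a contradiction. The a priori estimate plus density of smooth data, together with existence from $\HH^1$ and bootstrapped regularity via the local estimate, gives the $p>2$ inf-sup condition. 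For $p<2$ the inf-sup condition follows by duality from the $p^*>2$ solvability for $\bsfA^T$: given $u\in\WW^{1,p}_\#$, represent $\|u\|_{1,p}$ via a functional $g\in(\WW^{1,p}_\#)'$, solve the adjoint problem $a(\cdot,v)=g$ with $v\in\WW^{1,p^*}_\#$, and test. A subtlety here is that uniqueness for $p<2$ cannot use $\HH^1$ directly, which is why the duality route is used.

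\emph{Lipschitz case.} The same scheme applies with Meyers' theorem \cite{Meyers1963} in place of Calder\'on--Zygmund. With charts merely $C^{0,1}$, $\bA_i\in \LL^\infty$ is still uniformly elliptic, and Meyers gives $\WW^{1,p}$ estimates for $|p-2|<\varepsilon$, where $\varepsilon$ depends on the ellipticity and bound constants of the finitely many $\bA_i$, hence only on $\G$ and $\bsfA$. The localization, compactness and duality steps are unchanged.
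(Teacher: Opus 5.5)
Your architecture is the paper's: localize with a partition of unity, apply the Dolzmann--M\"uller $\WW^{1,p}$ theory for continuous coefficients in the parametric domains, upgrade the Lax--Milgram solution in $\HH^1_\#(\G)$ when $p>2$, and get $p<2$ by duality with $\bsfA^T$. Your inf-sup phrasing of that duality is equivalent to the paper's adjoint-bijectivity argument, and the Lipschitz case via Meyers matches too. Treating the localized functional $\psi\mapsto\langle f,\phi_i\psi-\overline{\phi_i\psi}\rangle$ directly as an element of $(\WW^{1,p^*}_0(\Omega_i))'$ is a valid substitute for the paper's lift $f=\divG\bu_f$ with $\bu_f\in\bL^p_t(\G)$.

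The gap is the localization estimate you feed into the compactness step. For $\phi_i u$ one has
\[
\int_\G \bsfA\nablaG(\phi_i u)\cdot\nablaG\psi
= \langle f,\phi_i\psi-\overline{\phi_i\psi}\rangle
- \int_\G \psi\,\bsfA\nablaG u\cdot\nablaG\phi_i
+ \int_\G u\,\bsfA\nablaG\phi_i\cdot\nablaG\psi .
\]
The middle commutator term contains $\nablaG u$, not just $u$. Since $\bsfA$ is merely continuous, you cannot integrate it by parts onto $u$. By Sobolev embedding you can bound it in $(\WW^{1,p^*}_0(\Omega_i))'$ by $\|\nablaG u\|_{0,r;\G}$ with $r=\tfrac{dp}{d+p}<p$. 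So the estimate you actually obtain is $\|u\|_{1,p;\G}\le C(\|f\|+\|u\|_{0,p;\G}+\|\nablaG u\|_{0,r;\G})$, not the one you state. Rellich gives strong convergence of $u_n$ in $\LL^p$ but says nothing about $\nablaG u_n$ in $\LL^r$. Hence your claim that the bound makes $(u_n)$ Cauchy in $\WW^{1,p}(\G)$ does not follow.

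One repair keeps the compactness argument: $u\mapsto\bsfA\nablaG u\cdot\nablaG\phi_i$ is compact from $\WW^{1,p}(\G)$ into $(\WW^{1,p^*}_0(\Omega_i))'$, because $\LL^p(\Omega_i)$ embeds compactly there (Schauder plus Rellich).

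The cleaner repair, which is what the paper does, makes the compactness step unnecessary:
\begin{itemize}
\item If $u\in\WW^{1,q}(\G)$, the commutator is an $\LL^q$ source. Dolzmann--M\"uller with an $\LL^q$ source then gives the localized function in $\WW^{1,s}$, with $s=\min\{p,\tfrac{dq}{d-q}\}$ (read $s=p$ if $q\ge d$).
\item Iterate from $q=2$, where $\|u\|_{1,\G}\lesssim\|f\|_{(\HH^1_\#(\G))'}\lesssim\|f\|_{(\WW^{1,p^*}_\#(\G))'}$. After finitely many steps you reach $\|u\|_{1,p;\G}\lesssim\|f\|_{(\WW^{1,p^*}_\#(\G))'}$.
\item This single bootstrap gives existence, the bound, and uniqueness for $p>2$, the last because $\WW^{1,p}_\#(\G)\subset\HH^1_\#(\G)$.
\end{itemize}
The bootstrap needs the uniqueness clause of the flat result: the $\HH^1_0$ solution must coincide with the $\WW^{1,s}_0$ one. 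That is a genuine regularity statement, not just an a priori estimate. This ``bootstrapped regularity'' step, which your proposal only gestures at, is the heart of the proof. Once the $p>2$ case is established this way, your duality step for $p<2$ and the Meyers modification go through as you describe.
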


\begin{thm}[$\WW^{m+2,p}$-regularity for the operator $-\divG(\bsfA \nablaG \cdot)$ on $\G$]\label{thm:div-form-higreg}
Let $p \in (1,\infty)$.
Assume there is some nonnegative integer $m$ such that $\G$ is of class $C^{m+1,1}$ and let $\bsfA \in \bbC^{m,1}(\G)$.
Moreover, let $\bsfA$ satisfy the ellipticity condition \eqref{A-ellipticity-Gamma}.
If $f \in \WW_\#^{m,p}(\G)$ (which induces an element in $(\WW^{1,p^*}_\#(\G))'$), then the solution $u \in \WW^{1,p}_\#(\G)$ of \eqref{eq:div-form-W1p-weakform} belongs to the space $\WW^{m+2,p}_\#(\G)$
and there exists a constant $C>0$ depending only on $\G$, $\bsfA$, $m$ and $p$ such that
\begin{equation}\label{eq:div-form-higher-apriori}
\|u\|_{m+2,p;\G} \leq C \|f\|_{m,p;\G}.
\end{equation}
Moreover, the solution $u \in \WW^{m+2,p}_\#(\G)$ satisfies
\begin{equation}\label{div-form-a.e.}
-\divG(\bsfA\nablaG u) = f, \qquad \text{a.e.~on $\G$}.
\end{equation}
\end{thm}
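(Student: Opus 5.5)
The plan is to localize, pull back to flat parametric domains, and apply interior $\WW^{m+2,p}$ Calder\'on--Zygmund estimates there, inducting on $m$. The starting point is \autoref{thm:div-form-wp}, which applies because $C^{m+1,1}\subset C^1$ and $\bsfA \in \bbC^{m,1}(\G)\subset\bbC(\G)$. It gives a unique $u\in\WW^{1,p}_\#(\G)$ with $\|u\|_{1,p;\G}\le C\|f\|_{(\WW^{1,p^*}_\#)'}\le C\|f\|_{0,p;\G}$. To see that $f\in\WW^{m,p}_\#(\G)$ induces a functional on $\WW^{1,p^*}_\#(\G)$, one uses $v\mapsto\int_\G fv$ and H\"older.

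Next cover $\G$ by finitely many charts $\bchi_i:\Omega_i\subset\RR^d\to\G$ of class $C^{m+1,1}$, with a subordinate partition of unity $\{\varphi_i\}$ that can be taken of class $C^{m+1,1}$ on $\G$. In a chart, the weak formulation \eqref{eq:div-form-W1p-weakform}, tested with $v=\varphi_i w\circ\bchi_i^{-1}$ (corrected by a constant to have zero mean; the constant is harmless because $\int_\G \bsfA\nablaG u\cdot\nablaG 1=0$ and $\int_\G f=0$), becomes
\begin{equation*}
\int_{\Omega_i} \tilde a^{kl}\,\partial_l \tilde u\,\partial_k \tilde w = \int_{\Omega_i}\tilde f\tilde w\sqrt{g}, \qquad \tilde a = \sqrt{g}\,g^{-1}D\bchi_i^T \bsfA\, D\bchi_i\, g^{-1},
\end{equation*}
where $g=D\bchi_i^TD\bchi_i$. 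This pullback identity should follow from the paper's earlier formulas for $\nablaG$ in coordinates.

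The key observation is regularity bookkeeping. Since $D\bchi_i\in C^{m,1}$, we get $g,\sqrt g,g^{-1}\in C^{m,1}$. Because $\bsfA\in C^{m,1}$ on $\G$, the composition $\bsfA\circ\bchi_i$ is $C^{m,1}$ as well. Hence $\tilde a\in C^{m,1}(\Omega_i)$. It is uniformly elliptic, since for $\eta\in\RR^d$ the vector $\bxi=D\bchi_i g^{-1}\eta$ is tangential and \eqref{A-ellipticity-Gamma} applies. Finally $\tilde f\sqrt g\in\WW^{m,p}(\Omega_i)$.

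We then localize with $\tilde\varphi_i=\varphi_i\circ\bchi_i$ and set $z=\tilde\varphi_i\tilde u\in\WW^{1,p}_0$. It satisfies $-\partial_k(\tilde a^{kl}\partial_l z)=F$, where $F$ collects $\tilde f\sqrt g\,\tilde\varphi_i$ together with commutator terms $-\partial_k(\tilde a^{kl}\tilde u\partial_l\tilde\varphi_i)-\tilde a^{kl}\partial_l\tilde u\,\partial_k\tilde\varphi_i$.

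We proceed by induction on the derivative level. For the base case $m=0$: $\tilde a$ is Lipschitz, and $F\in\LL^p$ because $\tilde u\in\WW^{1,p}$ and $\partial_k\tilde a$ is bounded. We can therefore write the equation in nondivergence form $-\tilde a^{kl}\partial_{kl}z-(\partial_k\tilde a^{kl})\partial_l z=F$. One point needs care here: we only know $z\in\WW^{1,p}$, not $\WW^{2,p}$. This is handled by the $\WW^{2,p}$ regularity for divergence-form operators with Lipschitz coefficients and compactly supported $\WW^{1,p}$ weak solutions, e.g.\ via difference quotients in $\LL^p$, or via uniqueness in $\WW^{1,p}_0$ of a ball combined with existence of a $\WW^{2,p}\cap\WW^{1,p}_0$ solution from \cite[Ch.~9]{GilbargTrudinger2001}. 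That yields $\|z\|_{2,p}\le C(\|f\|_{0,p;\G}+\|u\|_{1,p;\G})$.

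For higher $m$, we differentiate the flat equation tangentially. Each derivative $\partial_j z$ solves an equation of the same type whose right-hand side involves $\partial_j\tilde a\in C^{m-1,1}$ and derivatives of $z$ of order at most $m+1$, so it lies in $\WW^{m-1,p}$; the induction hypothesis then gives $\partial_j z\in\WW^{m+1,p}$. Summing over charts and using \autoref{thm:div-form-wp} to absorb $\|u\|_{1,p;\G}$ gives \eqref{eq:div-form-higher-apriori}, once the chart norms are identified with $\|\cdot\|_{m+2,p;\G}$ as in \autoref{sec:Sobolev-spaces-manifolds}. This identification needs $\bchi_i\in C^{m+1,1}$, which is exactly the assumption on $\G$.

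The a.e.\ identity \eqref{div-form-a.e.} then follows by integrating by parts on $\G$ in \eqref{eq:div-form-W1p-weakform}, which is legitimate since $\bsfA\nablaG u\in\bW^{1,p}$. This gives $\int_\G(-\divG(\bsfA\nablaG u)-f)v=0$ for all $v\in\WW^{1,p^*}_\#$. Both terms have zero mean, the first by the divergence theorem on the closed manifold $\G$, so the integrand is a zero-mean function orthogonal to the zero-mean functions. It is therefore constant, and since its mean is zero it vanishes. The main obstacle I expect is the base step: bootstrapping from a $\WW^{1,p}$ weak solution to $\WW^{2,p}$ with merely Lipschitz coefficients for all $p\in(1,\infty)$, especially for $p<2$, where Hilbert-space difference-quotient arguments are unavailable. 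I would handle it by the uniqueness-plus-existence argument on a smooth ball.
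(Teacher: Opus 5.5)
Your proposal is correct and follows essentially the paper's proof. The shared steps are: localization by a partition of unity; pull-back to charts where the coefficient $\sqrt{g}\,g^{-1}D\bchi^{T}\bsfA\,D\bchi\,g^{-1}$ (the paper's $\wt a_{\bchi}\wt\bM$) is uniformly elliptic and of class $C^{m,1}$; the $\WW^{2,p}$ step via \cite[Ch.~9]{GilbargTrudinger2001} combined with uniqueness of $\WW^{1,p}_0$ weak solutions (exactly \autoref{app:prop-hi-reg}); induction on the order over all charts; and integration by parts for the a.e.\ identity. The one difference is cosmetic: you raise the order by differentiating the flat equation, whereas the paper quotes \cite[Theorem 9.19]{GilbargTrudinger2001}.

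One caveat, which affects the paper's own last step equally. The integration by parts \eqref{eq:int-by-parts} and your claim $\int_\G \divG(\bsfA\nablaG u)=0$ both require $\bsfA\nablaG u$ to be tangential; otherwise the term $\tr(\bB)\,(\bsfA\nablaG u)\cdot\bnu$ survives. For example, on the unit sphere take $\bsfA=\bI+\bnu\otimes\nablaG x_3$, which is elliptic on tangent vectors and yields the same weak form as $\bI$; then $-\divG(\bsfA\nablaG u)=f-2\,\nablaG x_3\cdot\nablaG u$. So the a.e.\ identity should be read with $\bP\bsfA$ in place of $\bsfA$, which leaves the weak formulation unchanged.
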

We build on \autoref{thm:div-form-wp} and \autoref{thm:div-form-higreg} to also establish the well-posedness (cf.~\autoref{thm:gsep-wp}) and higher regularity (cf.~\autoref{thm:gsep-reg}) for weak solutions of the general scalar elliptic equation \eqref{eq:gsep} by means of the Fredholm alternative.
\begin{thm}[well-posedness of general scalar elliptic equations] \label{thm:gsep-wp}
Let $p \in (1,\infty)$.
Assume $\G$ is of class $C^1$, and consider coefficients $(\bsfA, \bsfb, \bsfc, \sfd) \in \bbC(\G) \times \bL^\infty_t(\G) \times \bL^\infty_t(\G) \times \LL^\infty(\G)$, with $\bsfA$ satisfying the ellipticity condition \eqref{A-ellipticity-Gamma}.
Moreover, assume that there exists a constant $\lambda > 0$ and a subset $M$ of $\G$ with positive measure such that at least one of the following conditions holds
\begin{subequations}
\begin{align}
\label{cond-coeffs-with-b}\int_\G (\sfd\,w + \bsfb \cdot \nablaG w) & \geq \lambda \int_M w, \qquad \forall w \in \WW^{1,1}(\G), \text{$w \geq 0$ on $\G$},\\
\label{cond-coeffs-with-c}\int_\G (\sfd\,w + \bsfc \cdot \nablaG w) & \geq \lambda \int_M w, \qquad \forall w \in \WW^{1,1}(\G), \text{$w \geq 0$ on $\G$}.
\end{align}
\end{subequations}
Then, for every $f \in (\WW^{1,p^*}(\G))'$, there exists a unique $u \in \WW^{1,p}(\G)$ such that
\begin{equation}\label{eq:gsep-weakform}
\int_\G \bsfA \nablaG u \cdot \nablaG v + u (\bsfb \cdot \nablaG v)  + (\bsfc \cdot \nablaG u) v + \sfd \, u v  = \langle f, v \rangle, \qquad \forall v \in \WW^{1,p^*}(\G).
\end{equation}
Moreover, there exists a constant $C>0$, depending only on $\G$, $\bsfA$, $\bsfb$, $\bsfc$, $\sfd$ and $p$, such that
\begin{equation}\label{eq:gsep-apriori}
\|u\|_{1,p;\G} \leq C \|f\|_{(\WW^{1,p^*}(\G))'}.
\end{equation}
This result is still valid for $\G$ of class $C^{0,1}$ and $\bsfA \in \bbL^\infty(\G)$ provided $p \in (2-\varepsilon, 2+\varepsilon)$ for $\varepsilon > 0$ sufficiently small depending only on $\G$ and $\bsfA$.
\end{thm}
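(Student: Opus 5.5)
We plan to prove \autoref{thm:gsep-wp} with the Fredholm alternative, using \autoref{thm:div-form-wp} for the principal part. The difficulty is that \autoref{thm:div-form-wp} works in the zero-mean spaces $\WW^{1,p}_\#(\G)$, while \eqref{eq:gsep-weakform} is posed in the full space $\WW^{1,p}(\G)$.

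\textbf{Step 1: an invertible principal operator on the full space.} Define $\mathcal{A}_0 : \WW^{1,p}(\G) \to (\WW^{1,p^*}(\G))'$ by
\[
\langle \mathcal{A}_0 u, v\rangle := \int_\G \bsfA \nablaG u \cdot \nablaG v + \Big(\int_\G u\Big)\Big(\int_\G v\Big).
\]
We show it is an isomorphism. Write $u = \bar u + u_0$ with $\bar u$ the mean value and $u_0 \in \WW^{1,p}_\#(\G)$, and split $v$ the same way. Then the equation $\mathcal{A}_0 u = f$ decouples into two pieces:
\begin{itemize}
\item $|\G|^2\,\bar u = \langle f, 1\rangle$, which fixes the constant;
\item the equation \eqref{eq:div-form-W1p-weakform} for $u_0$, with data $f$ restricted to $\WW^{1,p^*}_\#(\G)$. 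Its restriction norm is bounded by $\|f\|$.
\end{itemize}
\autoref{thm:div-form-wp} then gives existence, uniqueness and the bound $\|u\|_{1,p;\G} \le C\|f\|_{(\WW^{1,p^*}(\G))'}$.

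\textbf{Step 2: compact perturbation.} Let $\mathcal{L}$ be the operator of \eqref{eq:gsep-weakform}. Then $\mathcal{L} = \mathcal{A}_0 + \mathcal{K}$, where
\[
\langle \mathcal{K}u, v\rangle = \int_\G u\,(\bsfb\cdot\nablaG v) + (\bsfc\cdot\nablaG u)\,v + \sfd\, u v \;-\; \Big(\int_\G u\Big)\Big(\int_\G v\Big).
\]
The terms $u\,\bsfb$, $\sfd\,u$ and the mean term factor through the embedding $\WW^{1,p}(\G) \hookrightarrow \LL^p(\G)$, which is compact by Rellich on Lipschitz manifolds via charts. For the $\bsfc$ term we use duality: $\WW^{1,p^*}(\G) \hookrightarrow \LL^{p^*}(\G)$ is compact, so its adjoint $\LL^{p}(\G) \to (\WW^{1,p^*}(\G))'$ is compact, and $u \mapsto \bsfc\cdot\nablaG u \in \LL^p(\G)$ is bounded. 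Hence $\mathcal{K}$ is compact. Since $\mathcal{A}_0^{-1}\mathcal{L} = I + \mathcal{A}_0^{-1}\mathcal{K}$, the Fredholm alternative reduces existence, uniqueness and \eqref{eq:gsep-apriori} to injectivity of $\mathcal{L}$. Equivalently, by index zero, it suffices to prove injectivity of the adjoint $\mathcal{L}^*:\WW^{1,p^*}(\G)\to(\WW^{1,p}(\G))'$. The adjoint has the same structure with $\bsfA^T$ and with the roles of $\bsfb$ and $\bsfc$ swapped.

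\textbf{Step 3: injectivity (the main obstacle).} Under \eqref{cond-coeffs-with-c} we prove $\mathcal{L}$ injective; under \eqref{cond-coeffs-with-b} we prove $\mathcal{L}^*$ injective, by the same argument with $\bsfb$ and $\bsfc$ swapped. So suppose $\mathcal{L}u=0$.

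\emph{Regularity.} First we need $u$ regular enough to test with. The lower-order terms map $u \in \WW^{1,p}(\G)$ into the dual of a better space, by Sobolev embedding in charts. Rewriting $\mathcal{A}_0 u = -\mathcal{K}u$ and bootstrapping with \autoref{thm:div-form-wp} for increasing exponents, we get $u \in \WW^{1,q}(\G)$ for all $q<\infty$, in particular $u \in \WW^{1,2}(\G)$. In the Lipschitz case we only bootstrap into the window $(2-\varepsilon,2+\varepsilon)$.

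\emph{Testing.} Next we test with a truncation such as $v = \frac{u^-}{u^- + \delta}$ or $v = \min(u^-,\delta)/\delta$, in the spirit of Gilbarg--Trudinger's weak maximum principle (Thm.~8.1). The goal is to show $u^- = 0$ on $M$ and, via the ellipticity \eqref{A-ellipticity-Gamma} and connectedness of $\G$, that $u^-$ is constant and hence zero; by symmetry the same holds for $u^+$.

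\emph{Expected difficulty.} Matching \eqref{cond-coeffs-with-c}, which is tested against $w \ge 0$ with $\bsfc\cdot\nablaG w$, to the equation requires choosing $w$ as a function of $u$. The natural device is $w = \log$-type transforms, or the Stampacchia-type argument with $w=(u^- - k)^+$. I would first try $w = (u^-)^2/\big((u^-)^2+\delta\big)$-style test functions and pass $\delta\to 0$, using $\lambda\int_M w$ to force $|\{u<0\}\cap M| = 0$. I would then close with a Poincaré-type argument on $\G$ using that $M$ has positive measure.
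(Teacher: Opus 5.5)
\textbf{Steps 1--2 are sound; the gap is Step 3.} Your Steps 1--2 are correct and organized differently from the paper. The paper runs Fredholm only at $p=2$, using $\langle Lu,v\rangle=\int_\G \bsfA\nablaG u\cdot\nablaG v+\int_\G(\sfd\,uv+\bsfb\cdot\nablaG(uv))$, which is coercive by \eqref{cond-coeffs-with-b} with $w=u^2$, plus a compact part. It then bootstraps to $p>2$ and dualizes for $p<2$. Your $\mathcal{A}_0$ with the rank-one mean term instead gives an index-zero Fredholm operator in $\WW^{1,p}(\G)$ for every $p$ at once, which is a clean alternative.

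Step 3, however, is the real content of the theorem, and you only outline it. You also pair the conditions with the operators the opposite way from the one in which the Gilbarg--Trudinger maximum principle you invoke works. Condition \eqref{cond-coeffs-with-b} says weakly that $\sfd-\divG\bsfb\ge\lambda 1_M$, which is exactly the zeroth-order coefficient of $\mathcal{L}$ in nondivergence form. So GT Theorem 8.1 applies to $\mathcal{L}$ under \eqref{cond-coeffs-with-b} and to $\mathcal{L}^*$ under \eqref{cond-coeffs-with-c}. In your direction, to use \eqref{cond-coeffs-with-c} on $\mathcal{L}u=0$ you must write $(\bsfc\cdot\nablaG u)v=\bsfc\cdot\nablaG(uv)-u\,\bsfc\cdot\nablaG v$. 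This leaves the residual drift $\int_\G u\,(\bsfb-\bsfc)\cdot\nablaG v$, with $u$ rather than $v$ in front. For the Stampacchia truncations $(u^--k)^+$ you mention, this term is of size $k\int|\nablaG v_k|$ and cannot be absorbed, so that argument fails. Nothing in your sketch yields that ``$u^-$ is constant'' either.

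\textbf{How to close the gap.} The simplest fix is to swap the pairing.
\begin{itemize}
\item Under \eqref{cond-coeffs-with-b}, prove $\ker\mathcal{L}=\{0\}$ as in \autoref{prop:uniqueness-GSES}. Write the form as $\int_\G\bsfA\nablaG u\cdot\nablaG v+v(\bsfc-\bsfb)\cdot\nablaG u+\int_\G(\sfd\,uv+\bsfb\cdot\nablaG(uv))$ and test with $v_k=(u-k)^+$. The residual is then $\int|\nablaG v_k||v_k|$, and the Sobolev--H\"older argument gives $|\{\nablaG v_k\neq0\}|\ge C$ uniformly in $0<k<\operatorname{ess\,sup}u$. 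This contradicts $\nablaG u=0$ a.e.\ on level sets.
\item Under \eqref{cond-coeffs-with-c}, apply the same argument to $\mathcal{L}^*$, whose coefficients are $(\bsfA^T,\bsfc,\bsfb,\sfd)$, and invoke index zero.
\end{itemize}

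Alternatively, your own direction can be completed, but only with a truncation at level zero. Set $\psi_\delta:=\min(u^-,\delta)/\delta$ and $E_\delta:=\{-\delta<u<0\}$, and apply \eqref{cond-coeffs-with-c} to $w=u^-\psi_\delta\ge0$. Since $|u|<\delta$ wherever $\nablaG\psi_\delta\neq0$, the equation gives
\[
\frac{\Lambda}{\delta}\int_{E_\delta}|\nablaG u|^2+\lambda\int_M u^-\psi_\delta\le\|\bsfb-\bsfc\|_{0,\infty;\G}\int_{E_\delta}|\nablaG u|.
\]
Letting $\delta\to0$ yields $u\ge0$ a.e.\ on $M$. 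Cauchy--Schwarz on the right-hand side yields $\|\nablaG\psi_\delta\|_{0,\G}^2\lesssim|E_\delta|\to0$. Hence the indicator $1_{\{u<0\}}$ (not $u^-$) lies in $\HH^1(\G)$ with zero gradient, so it is constant on the connected $\G$. Since it vanishes on $M$, it vanishes everywhere, i.e.\ $u\ge0$; applying this to $-u$ gives $u=0$. In either route, your regularity sub-step ($u\in\HH^1(\G)$, so $\HH^1$ test functions are admissible) is what makes the testing legitimate.
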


The requirement of positivity of the constant $\lambda$ in \eqref{cond-coeffs-with-b} and \eqref{cond-coeffs-with-c} may seem unusual at first, since in flat bounded domains with zero Dirichlet boundary conditions $\lambda$ can be chosen to be zero \cite[eq.~(8.8)]{GilbargTrudinger2001}.
This is due to the fact that $\G$ does not have a boundary, which implies that $\|\nablaG \cdot\|_{0,\G}$ does not control the full $\HH^1(\G)$-norm, and hence $\lambda>0$ is required for a Poincaré-type inequality to hold;
we refer to the proof of \autoref{prop:uniqueness-GSES} for further details.

In the theorem below, solely for simplicity of notation, we set: $\bC^{-1,1}_t(\G) := \bL^\infty_t(\G)$ and $\CC^{-1,1}(\G) := \LL^\infty(\G)$.

\begin{thm}[higher regularity for general scalar elliptic equations]\label{thm:gsep-reg}
Let $p \in (1,\infty)$.
Assume there is some nonnegative integer $m$ such that $\G$ is class $C^{m+1,1}$ and let $(\bsfA, \bsfb, \bsfc, \sfd) \in \bbC^{m,1}(\G) \times \bC_t^{m,1}(\G) \times \bC^{m-1,1}_t(\G) \times C^{m-1,1}(\G)$, with $\bsfA$ satisfying condition \eqref{A-ellipticity-Gamma}.
Moreover, assume that at least one of the conditions \eqref{cond-coeffs-with-b} and \eqref{cond-coeffs-with-c} is verified.
Then, for each $f \in \WW^{m,p}(\G)$ the solution $u \in \WW^{1,p}(\G)$ of \eqref{eq:gsep-weakform} is in fact in $\WW^{m+2, p}(\G)$
and satisfies
\begin{equation}\label{gsep-a.e.}
- \divG (\bsfA \nablaG u + u\, \bsfb ) + \bsfc \cdot \nablaG u + \sfd u = f, \qquad \text{a.e.~on $\G$}.
\end{equation}
Moreover, there is a constant $C >0$, depending only on $\G$, $\bsfA$, $\bsfb$, $\bsfc$, $\sfd$ and $p$, such that
\begin{equation}\label{eq:gsep-higher-apriori}
\|u\|_{m+2,p;\G} \leq C \|f\|_{m,p;\G}.
\end{equation}
\end{thm}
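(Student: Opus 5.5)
The plan is to move all lower-order terms to the right-hand side, apply \autoref{thm:div-form-higreg} to the purely diffusive operator, and bootstrap one derivative at a time.

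\emph{Setting up.} By \autoref{thm:gsep-wp} (note $\G\in C^{m+1,1}\subset C^1$ and $\bsfA$ is continuous), there is a unique $u\in\WW^{1,p}(\G)$ solving \eqref{eq:gsep-weakform}, with $\|u\|_{1,p;\G}\le C\|f\|_{(\WW^{1,p^*}(\G))'}\le C\|f\|_{0,p;\G}$. Rewrite \eqref{eq:gsep-weakform} as
\[
\int_\G \bsfA\nablaG u\cdot\nablaG v = \langle g, v\rangle,\qquad \langle g,v\rangle := \int_\G \bigl(f-\bsfc\cdot\nablaG u-\sfd u\bigr)v - \int_\G u\,\bsfb\cdot\nablaG v .
\]
Since $\bsfb$ is tangential and $\G$ is at least $C^{1,1}$, integration by parts on $\G$ gives, for $u\in\WW^{1,q}$,
\[
-\int_\G u\,\bsfb\cdot\nablaG v=\int_\G \divG(u\bsfb)\,v .
\]
This requires $\divG\bsfb\in\LL^\infty$, which holds because $\bsfb\in\bC^{m,1}_t$ with $m\ge0$. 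Hence $g=f-\bsfc\cdot\nablaG u-\sfd u+\divG(u\bsfb)$ is a function. Testing \eqref{eq:gsep-weakform} with $v\equiv1$ shows $\int_\G g=0$, so $g$ has zero mean. Subtracting the mean of $u$ shows that $u-\bar u\in\WW^{1,p}_\#(\G)$ solves \eqref{eq:div-form-W1p-weakform} with datum $g$, for all $v\in\WW^{1,p^*}_\#$ and hence all $v\in\WW^{1,p^*}$.

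\emph{Bootstrap.} Induct on $k=0,\dots,m$, proving $u\in\WW^{k+2,p}(\G)$ with $\|u\|_{k+2,p}\le C\|f\|_{k,p}$.
\begin{itemize}
\item For $k=0$: $u\in\WW^{1,p}$, and $\bsfb\in\bC^{0,1}$, $\bsfc,\sfd\in\LL^\infty$ give $g\in\LL^p_\#$ with $\|g\|_{0,p}\le C(\|f\|_{0,p}+\|u\|_{1,p})$. \autoref{thm:div-form-higreg} with $m=0$ (valid since $\G\in C^{1,1}$ and $\bsfA\in\bbC^{0,1}$) yields $u\in\WW^{2,p}$.
\item For the step $k-1\to k$: given $u\in\WW^{k+1,p}$, the products $\bsfc\cdot\nablaG u$ and $\sfd u$ lie in $\WW^{k,p}$ because $\bsfc,\sfd\in C^{k-1,1}$. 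Likewise $\divG(u\bsfb)=\nablaG u\cdot\bsfb+u\divG\bsfb$ lies in $\WW^{k,p}$ because $\bsfb\in C^{k,1}$, so $\divG\bsfb\in C^{k-1,1}$; here one uses that $\G\in C^{k+1,1}$ makes the tangential derivatives of $\bsfb$ well behaved. Then \autoref{thm:div-form-higreg} with index $k$ gives $u\in\WW^{k+2,p}$.
\end{itemize}
Chaining the estimates and using $\|u\|_{1,p}\le C\|f\|_{0,p}$ gives \eqref{eq:gsep-higher-apriori}. The pointwise identity \eqref{gsep-a.e.} follows from \eqref{div-form-a.e.} applied to $u-\bar u$ and datum $g$, after expanding $g$.

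\emph{Main obstacle.} The delicate points are the multiplier and product estimates on $\G$: showing that multiplication by $C^{k-1,1}(\G)$ functions maps $\WW^{k,p}(\G)$ into itself, and that $\divG$ of a $C^{k,1}$ tangential field lies in $C^{k-1,1}$ when $\G$ is only $C^{k+1,1}$. Both require the Leibniz rule and the chain rule for covariant derivatives, with the normal $\bnu\in C^{k,1}$ entering through the tangential projection. I would verify these in local parametrizations, where the metric coefficients are $C^{k,1}$, so that they reduce to the flat-domain statements.
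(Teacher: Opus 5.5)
Your proposal is correct and is essentially the paper's argument. Your $g=f-\bsfc\cdot\nablaG u-\sfd u+\divG(u\bsfb)$ coincides with the paper's $h$, you bootstrap through \autoref{thm:div-form-higreg} exactly as the paper does, and you also make explicit the compatibility step the paper leaves implicit ($\int_\G g=0$ by testing with $v\equiv 1$, then applying the theorem to $u-\bar u\in\WW^{1,p}_\#(\G)$ and adding back the constant).

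There is one caveat, shared with the paper and not a flaw in your reasoning: the a.e.~identity \eqref{gsep-a.e.} tacitly requires $\bsfA\nablaG u\cdot\bnu=0$, whether it is obtained via \eqref{div-form-a.e.} or directly via \eqref{eq:int-by-parts}. Otherwise \eqref{eq:int-by-parts} leaves the extra term $\tr(\bB)\,(\bsfA\nablaG u\cdot\bnu)$, so for a general $\bsfA$ the divergence there should be read as $\divG(\bP\bsfA\nablaG u+u\,\bsfb)$.
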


Even though all of our results are stated for compact manifolds of class $C^m$ (or $C^{m-1,1}$) without boundary, upon appropriate modifications, we expect them to extend to the case of a $d$-dimensional manifold $\G$ of the same regularity class with a sufficiently regular boundary $\partial \G$.
In particular, when expressing the problem in the parametric domain, boundary regularity estimates would need to be resorted to in addition to the interior regularity estimates employed in this paper.

The rest of the paper is structured as follows.
We start in \autoref{sec:diff-geo} with basic concepts from differential geometry, transitioning into \autoref{sec:Sobolev-spaces-manifolds} with a careful examination of Sobolev spaces on manifolds.
\autoref{sec:scal-ell} is devoted to proving the well-posedness and higher regularity of \eqref{eq:gsep}.
In \autoref{subsec:lp-reg-divform} we show \autoref{thm:div-form-wp} and \autoref{thm:div-form-higreg} for PDEs in divergence form whereas in \autoref{subsec:lp-reg-GSES} we prove \autoref{thm:gsep-wp} and \autoref{thm:gsep-reg} for general scalar elliptic PDEs.
We deal with extensions to convection-diffusion equations in \autoref{subsec:extensions} and surface biharmonic equations in \autoref{subsec:biharmonic}.
We conclude with \autoref{app:var-prob} (Variational problems in reflexive Banach spaces) and \autoref{app:regularity-flat} (Regularity estimates in flat domains), which make the paper self-contained.
\section{Preliminaries on differential geometry}\label{sec:diff-geo}

In this section, we collect classical results on calculus on manifolds;
we refer to Dziuk and Elliott \cite[\textsection 2]{DziukElliott2013}, Bonito, Demlow, and Nochetto \cite[\textsection 1.2]{BonitoDemlowNochetto2020} and Shahshahani \cite{Shahshahani2016}.
We primarily focus on the definitions of geometric quantities and differential operators on manifolds that are less regular than $C^2$---given the lack of regularity, such definitions have to involve parameterizations.

Let $\G$ be a $d$-dimensional ($d \geq 2$) compact and connected manifold without boundary embedded in $\RR^{d+1}$ of class $C^{0,1}$ (see \cite[p.~102]{Shahshahani2016} for the definition of a manifold of class $C^m$ for $m \in \NN$, which naturally extends to $C^{m-1,1}$, and \cite[p.~134]{Shahshahani2016} for the definition of an embedded manifold).
We assume connectedness out of convenience, otherwise we could proceed independently for each connected component.
Since $\G$ is compact, there exists a finite atlas $\{(\cV_i,\cU_i,\wt\bchi_i)\}_{i=1}^N$ of $\G$, where each of the charts $\wt\bchi_i: \cV_i \rightarrow \cU_i \cap \G$ are isomorphisms of class $C^{0,1}$ compatible with the orientation of $\G$.
The sets $\cV_i$ are open connected subsets of $\RR^d$ and the sets $\cU_i$ are open subsets of $\RR^{d+1}$.
Without loss of generality we can assume that there exist domains $\cW_i \subseteq \RR^{d+1}$ such that $\overline{\cW_i} \subseteq \cU_i$ and $\{\cW_i\}_{i=1}^N$ is still a covering of $\G$ \cite[Ch.~4, Lemma 10]{Shahshahani2016}.

By using a partition of unity, in general it will be enough to consider a single chart;
hence, we drop the index $i$ for ease of notation whenever no ambiguity arises.
For $\bx \in \cU \cap \G$, we write $\by:= \wt\bchi^{-1}(\bx) \in \cV$.
Moreover, for every (scalar, vector or tensor) function $\bf$ defined on $\cU \cap \G$ we let $\wt\bf := \bf \circ \wt\bchi$, which is defined on $\cV$;
equivalently $\bf(\bx) = \wt\bf(\by)$ for each $\by \in \cV$ and $\bx =\wt\bchi(\by)$.

The \emph{first fundamental form} is given by $\wt\bg(\by) = \nabla^T \wt\bchi(\by) \nabla \wt\bchi(\by) \in \RR^{d \times d}$ for each $\by \in \cV$.
The non-degeneracy of $\wt\bchi$ on $\cV$ ensures that $\wt\bg = (\wt g_{ij})_{i,j=1}^d$ is symmetric uniformly positive definite on $\cV$.
Its inverse matrix is denoted by $\wt\bg^{-1} = (\wt g^{ij})_{i,j=1}^d$.

The \emph{area element} $\wt a_{\bchi}$ is defined by $\wt a_{\bchi}(\by) = \sqrt{\det \wt\bg(\by)}$.
A measurable function $v: \cU \cap \G \rightarrow \RR$ is integrable if and only if the measurable function $\wt v: \cV \rightarrow \RR$ is integrable.
The area element links their integral values via the identity
\begin{equation}\label{change-of-variable}
\int_{\cU \cap \G} v = \int_\cV \wt v \, \wt a_{\bchi}.
\end{equation}
An a.e.~defined unit normal vector field $\bnu = (\nu_i)_{i=1}^{d+1}: \G \rightarrow \RR^{d+1}$ to $\G$ is given by $\bnu := \bN/|\bN|$, where $\wt\bN(\by) := \sum_{j=1}^{d+1} \det(\be_j, \nabla \wt\bchi(\by)) \be_j$ for $\by \in \cV$; here $\{\be_j\}_{j=1}^{d+1}$ is the canonical basis of $\RR^{d+1}$.
We denote by $\bP := \bI - \bnu \otimes \bnu$ the projection operator onto the tangent plane to $\G$, which can be expressed in terms of the parametrization as
\begin{equation}\label{eq:tang-proj}
\bP(\bx) =  \wt\bP(\by) = \nabla\wt\bchi(\by) \wt\bg^{-1}(\by) \nabla^T\wt\bchi(\by).
\end{equation}

The \emph{exterior gradient} $\nablaM$ acts on $C^{0,1}$ scalar fields $v: \G \rightarrow \RR$ and vector fields $\bv = (v_i)_{i=1}^{d+1} : \G \rightarrow \RR^{d+1}$, and is given by
\begin{equation}\label{surface-gradient-param}
\nablaM v(\bx) := \nabla \wt\bchi(\by) \wt\bg(\by)^{-1} \nabla \wt v(\by), \qquad\qquad
\nablaM \bv := \begin{bmatrix} \nablaM^T v_1 \\ \vdots \\ \nablaM^T v_{d+1}\end{bmatrix};
\end{equation}
notice that $\bP \nablaM v = \nablaM v$.
The \textit{exterior divergence} $\divM$ acts on $C^{0,1}$ vector fields $\bv: \G \rightarrow \RR^{d+1}$ and tensor fields $\bA: \G \rightarrow \RR^{(d+1)\times(d+1)}$, and is defined by
\begin{equation}\label{surface-divergence-param}
\divM \bv(\bx) := \tr(\nablaM \bv(\bx)) = \sum_{i,j=1}^d \wt g^{ij}(\by) \partial_i \wt\bchi(\by) \cdot \partial_j \wt\bv(\by), \qquad
\divM \bA = (\divM \bA_{i,\cdot} )_{i=1}^{d+1},
\end{equation}
where $\bA_{i,\cdot}$ denotes the $i$-th row of $\bA$.

On the other hand, the \emph{covariant gradient} $\nablaG$ is defined by
\begin{equation}\label{covariant-surface-gradient}
\nablaG v = \nablaM v, \qquad
\nablaG \bv = \bP \nablaM \bv
= \bP \begin{bmatrix} \nablaG^T v_1 \\ \vdots \\ \nablaG^T v_{d+1}\end{bmatrix}.
\end{equation}
The \textit{covariant divergence} $\divG$ acts on vector fields $\bv: \G \rightarrow \RR^{d+1}$ and tensor fields $\bA: \G \rightarrow \RR^{(d+1)\times(d+1)}$, and is defined by
\begin{equation}\label{covariant-surface-divergence}
\divG \bv = \tr(\nablaG \bv), \qquad
\divG \bA = (\divG \bA_{i,\cdot} )_{i=1}^{d+1}.
\end{equation}
It turns out that the exterior and covariant divergences coincide, i.e.,
\begin{equation*}
\divM \bv = \divG \bv, \qquad
\divM \bA = \divG \bA.
\end{equation*}
If $\G$ and $v:\G \rightarrow \RR^{d+1}$ are of class $C^{1,1}$, then we define the Laplace--Beltrami operator $\DeltaG$ by
\begin{equation}\label{Laplace--Beltrami-param}
\DeltaG v(\bx) := \divG \nablaG v(\bx) = \frac{1}{\wt a_{\bchi}(\by)} \div(\wt a_{\bchi}(\by) \bg(\by)^{-1} \nabla \wt v(\by)).
\end{equation}
It is sometimes convenient to write $\nablaG v = (\uD_1 v, \dotsc, \uD_{d+1} v)$ for the $d+1$ components of $\nablaG v = \nablaM v$;
that is
\begin{equation}\label{surface-tangentialderivative-param}
\uD_i v(\bx) = \sum_{k,l=1}^{d+1} \partial_k \wt\chi_i(\by) \wt g^{kl}(\by) \partial_l \wt v(\by).
\end{equation}
This notation allows us, in particular, to express the action of $\divG$ and $\DeltaG$ in compact form:
\begin{equation*}
\divG \bv = \sum_{k=1}^{d+1} \uD_k v_k, \qquad \qquad \DeltaG v = \sum_{k=1}^{d+1} \uD_k \uD_k v.
\end{equation*}
For $\G$ of class $C^{1,1}$, we write $\bB := \nablaM \bnu \in \bbL^\infty(\G)$ for the so-called \emph{shape operator} of $\G$ (aka \emph{Weingarten map}).
Since $|\bnu|^2 = 1$, it follows that $\bB \bnu = \mathbf{0}$ a.e.~on $\G$.
Notice that for $\G$ of class $C^2$, it can be proved that $\bB$ coincides with the Hessian matrix of the distance function to $\G$, and so it is symmetric.

\begin{remark}[independence of parametrizations]\label{rem:diff-op-ind-param}
It is possible to show that for $\G$ of class $C^{0,1}$, the definitions \eqref{eq:tang-proj} and \eqref{surface-gradient-param} of $\bP(\bx)$ and $\nabla_M v(\bx)$ are independent of the choice of parametrizations: if two parametrizations $\wt\bchi_i: \cV_i \rightarrow \cU_i \cap \G$, $i \in \{1,2\}$ are such that $\bx \in \cU_1 \cap \cU_2 \cap \G$, then, by denoting $\by_i = \wt\bchi_i^{-1}(\bx)$, $\wt\bg_i = \nabla^T \wt\bchi_i \nabla \wt\bchi_i$ and $\wt v_i = v \circ \wt\bchi_i$, there holds that
\begin{align*}
\nabla\wt\bchi_1(\by_1) \wt\bg_1^{-1}(\by_1) \nabla^T\wt\bchi_1(\by_1)
& = \nabla\wt\bchi_2(\by_2) \wt\bg_2^{-1}(\by_2) \nabla^T\wt\bchi_2(\by_2),\\
\nabla\wt\bchi_1(\by_1) \wt\bg_1^{-1}(\by_1) \nabla\wt v_1(\by_1)
& = \nabla\wt\bchi_2(\by_2) \wt\bg_2^{-1}(\by_2) \nabla\wt v_2(\by_2).
\end{align*}
The proof follows from straightforward applications of the chain rule applied to identities $\wt\bchi_2 = \wt\bchi_1 \circ (\wt\bchi_1^{-1} \circ \wt\bchi_2)$ and $\wt v_1 \circ (\wt\bchi_1^{-1} \circ \wt\bchi_2) = \wt v_2$, and simple algebraic manipulations.
We omit further details.

Consequently, all differential operators on $\G$ introduced throughout the present section are also independent of the chosen parametrizations.
\end{remark}

We finish this section by mentioning that for $\G$ is of class $C^2$, we can express $\nablaM$ in a purely extrinsic (that is, independent of parametrizations) manner:
\begin{equation*}
\nablaM v = \bP \left(\nabla v^e\right)\rvert_\G, \qquad \nablaM \bv = \left(\nabla \bv^e\right)\rvert_\G \bP,
\end{equation*}
where $v^e$ (resp.~$\bv^e$) denotes an
extension \cite[eq.~(1.62)]{BonitoDemlowNochetto2020} of $v$ (resp.~$\bv$) to an $(d+1)$-dimensional tubular neighborhood $\Omega_\delta$ of $\G$ for which the distance function to $\G$, $\bx \mapsto \operatorname{dist}(\bx,\G)$ for $\bx \in \Omega_\delta$, remains of class $C^2$ \cite[Lemma~2.8]{DziukElliott2013}.

\section{Sobolev spaces on \texorpdfstring{$\G$}{Γ}}\label{sec:Sobolev-spaces-manifolds}

In this section, we carefully and systematically review several definitions of Sobolev spaces on manifolds, with particular emphasis on the regularity of $\G$ for which these definitions coincide.

\subsection{Definition of Sobolev spaces on \texorpdfstring{$\G$}{Γ}}
We start with Sobolev spaces over manifolds of limited regularity (\autoref{def:Sobolev-spaces-manifold}) in terms of parameterizations, and next show that this definition is independent of the latter (\autoref{prop:sob-ind-param}).
Consequently, we can extend the definitions of differential operators to Sobolev functions (\autoref{def:diff-ops-SSS}), derive crucial density results (\autoref{thm:density-C^m}), and finally introduce an equivalent, but more convenient norm on Sobolev spaces that is parameterization-free (\autoref{prop:param_ind-Sobolev-norm}).

For our purposes we find it convenient to begin by adopting definition \cite[p.~76]{Morrey2008} for a compact manifold $\G$.

\begin{definition}[Sobolev spaces on $\G$]\label{def:Sobolev-spaces-manifold}
Let $\G$ be of class $C^{m-1,1}$, for some $m \in \NN$, and let $\{(\cV_i,\cU_i,\wt\bchi_i)\}_{i=1}^N$ be two finite atlases of $\G$ (cf.~\autoref{sec:diff-geo}).
For $p \in [1,\infty)$, we define the Sobolev space $\WW^{m,p}(\G)$ by
\begin{equation*}
\WW^{m,p}(\G) := \{ u:\Gamma \rightarrow \RR \mid u \circ \wt\bchi_i \in \WW^{m,p}(\cV_i), \quad \forall i=1,\dotsc,N\},
\end{equation*}
endowed with the norm
\begin{equation}\label{scalar-Sobolev-norm}
\|u\|_{m,p;\G} := \left( \sum_{i=1}^N \|u \circ \wt\bchi_i\|_{m,p;\cV_i}^p \right)^{1/p}.
\end{equation}
\end{definition}

The following result shows that the Sobolev space $\WW^{m,p}(\G)$ as defined above is independent of the chosen atlas.
It was mentioned in \cite[p.~76]{Morrey2008} albeit without proof, which we now provide for completeness.

\begin{proposition}[independence of parametrizations]\label{prop:sob-ind-param}
The definition of $\WW^{m,p}(\G)$ is independent of the chosen atlas.
Moreover, given two finite atlases of $\G$, their induced norms \eqref{scalar-Sobolev-norm} are topologically equivalent.
\begin{proof}
Let $\{(\cV_i,\cU_i,\wt\bchi_i)\}_{i=1}^N$ and $\{(\sV_j,\sU_j,\wt\bxi_j)\}_{j=1}^M$ be two finite atlases of $\G$ of class $C^{m-1,1}$.
Let $u:\G \rightarrow \RR$ satisfy $u \circ \wt\bchi_i \in \WW^{m,p}(\cV_i)$ for each $i=1\dotsc,N$.
Without loss of generality we will assume that $\cU_i \cap \sU_j \cap \G \neq \varnothing$.

Let $j \in \{1,...,M\}$ be fixed. Since $u \circ \wt\bchi_i \in \WW^{m,p}(\cV_i)$ and $\wt\bchi_i^{-1} \circ \wt\bxi_j$ and its inverse are of class $C^{m-1,1}$, we observe that equality $u \circ \wt\bxi_j = (u \circ \wt\bchi_i) \circ (\wt\bchi_i^{-1} \circ \wt\bxi_j)$ holds a.e.~in $\wt\bxi_j^{-1}(\cU_i \cap \sU_j \cap \G)$. Thus, by ``changing coordinates'' \cite[Theorem 3.1.7]{Morrey2008}, \cite[Theorem 11.54]{Leoni:2009} (see also \cite[Exercise 4.2.11]{Weber2018-lecturenotes}), it follows that $(u \circ \wt\bxi_j)\rvert_{\wt\bxi_j^{-1}(\cU_i \cap \sU_j \cap \G)} \in \WW^{m,p}(\wt\bxi_j^{-1}(\cU_i \cap \sU_j \cap \G))$ and $\|u \circ \wt\bxi_j\|_{m,p;\wt\bxi_j^{-1}(\cU_i \cap \sU_j \cap \G)} \lesssim \|u \circ \wt\bchi_i\|_{m,p;\wt\bchi_i^{-1}(\cU_i \cap \sU_j \cap \G)}$.
Thus, noticing that $\bigcup_{i=1}^N \wt\bxi_j^{-1}(\cU_i \cap \sU_j \cap \G) = \wt\bxi_j^{-1} \left( \bigcup_{i=1}^N \cU_i \cap \sU_j \cap \G \right) = \wt\bxi_j^{-1}(\sU_j \cap \G) = \sV_j$, we deduce that $u \circ \wt\bxi_j \in \WW^{m,p}(\sV_j)$ and, by the finite overlapping property of the underlying sets, that
\begin{equation*}
\|u \circ \wt\bxi_j\|_{m,p;\sV_j}
\lesssim \sum_{i=1}^N \|u \circ \wt\bxi_j\|_{m,p;\wt\bxi_j^{-1}(\cU_i \cap \sU_j \cap \G)}
\lesssim \sum_{i=1}^N \|u \circ \wt\bchi_i\|_{m,p;\wt\bchi_i^{-1}(\cU_i \cap \sU_j \cap \G)}
\leq \sum_{i=1}^N \|u \circ \wt\bchi_i\|_{m,p;\cV_i}.
\end{equation*}
The proof is concluded after summing over $j=1,\dotsc,M$.
\end{proof}
\end{proposition}

It is easy to see that $C^{m-1,1}(\G) \subseteq \WW^{m,p}(\G)$.
We write $\bW^{m,p}(\G) := [\WW^{m,p}(\G)]^{d+1}$ and endow it with its natural product-space norm.
For $p = 2$, we also write $\bH^{m}(\G) := \bW^{m,2}(\G)$.

We are now in position to define differential operators on $\WW^{m,p}(\G)$.

\begin{definition}[differential operators on Sobolev spaces on $\G$]\label{def:diff-ops-SSS}
For $\G$ of class $C^{0,1}$ and $p \in [1,\infty)$, we naturally extend the definition of the linear operators $\nablaM$, $\nablaG$, $\div_M$ and $\divG$ acting on $C^{1,1}(\G)$ to $\WW^{1,p}(\G)$ (and its vector- and tensor-valued versions) by the a.e.~valid formulas \eqref{surface-gradient-param}, \eqref{surface-divergence-param}, \eqref{covariant-surface-gradient} and \eqref{covariant-surface-divergence}.

Similarly, if $\G$ is of class $C^{m-1,1}(\G)$ for some $m \in \NN$, the appropriate composition of the aforementioned operators allows us to define differential operators of order $m$ acting on elements of $\WW^{m,p}(\G)$.

On account of \autoref{rem:diff-op-ind-param}, all these operators are irrespective of the chosen finite atlas of $\G$.
\end{definition}

Finally, we introduce the following spaces
\begin{align*}
\LL^p_\#(\G) & := \{\textstyle  q \in \LL^p(\G): \int_\G q = 0 \},\\
\WW^{m,p}_\#(\G) & := \LL^p_\#(\G) \cap \WW^{m,p}(\G),\\
\bW^{m,p}_t(\G) & := \{ \bv \in \bW^{m,p}(\G): \bv \cdot \bnu = 0~\text{a.e.~on $\G$} \},
\end{align*}
which are respectively closed subspaces of $\LL^p(\G)$, $\WW^{m,p}(\G)$ and $\bW^{m,p}(\G)$.
Notice that, if $\G$ is of class $C^{m,1}$ for some nonnegative integer $m$, $\WW^{m+1,p}(\G) \subseteq \WW^{m,p}(\G)$ and $\nablaM u \in \bW^{m,p}_t(\G)$ for each $u \in \WW^{m+1,p}(\G)$.

The following result states that Sobolev spaces on $\G$ behave well under multiplication by sufficiently smooth functions.
Its a proof is a straightforward consequence of the classical Leibniz rule in flat domains.
\begin{lemma}[product rule on $\WW^{m,p}(\G)$]\label{lem:product-by-smooth}
Assume $\G$ is of class $C^{m-1,1}$ for some $m \in \NN$, and let $p \in [1,\infty)$.
Then for each $\theta \in C^{m-1,1}(\G)$ and $u \in \WW^{m,p}(\G)$, the product $\theta u$ belongs to $\WW^{m,p}(\G)$,
\begin{equation}\label{eq:product-by-smooth}
\nablaM (\theta u) = \theta \nablaM u + u \nablaM \theta, \quad \text{a.e.~on $\G$}.
\end{equation}
and there exists a constant $C_\theta>0$, depending only on $\theta$, such that
\begin{equation}\label{eq:stab_product-by-smooth}
\|\theta u\|_{m,p;\G} \leq C_\theta \|u\|_{m,p;\G}.
\end{equation}
\end{lemma}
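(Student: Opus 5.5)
The plan is to reduce everything to a single chart, using \autoref{def:Sobolev-spaces-manifold} and the definition of $\nablaM$ in \eqref{surface-gradient-param}. Fix a finite atlas $\{(\cV_i,\cU_i,\wt\bchi_i)\}_{i=1}^N$ of class $C^{m-1,1}$. For $\theta \in C^{m-1,1}(\G)$ I take as understood (this is how the class is defined through charts) that each pullback $\wt\theta_i := \theta\circ\wt\bchi_i$ lies in $C^{m-1,1}(\overline{\cV_i})$. Shrinking the charts slightly if needed, as allowed by the domains $\cW_i$ with $\overline{\cW_i}\subseteq\cU_i$ from \autoref{sec:diff-geo}, these functions are bounded together with all derivatives up to order $m-1$, and their $(m-1)$-th derivatives are Lipschitz. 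Consequently $\wt\theta_i \in \WW^{m,\infty}(\cV_i)$, with derivatives of order $m$ existing a.e. (Rademacher) and bounded by the Lipschitz constant. The set of such bounds, taken over $i$, will be the only dependence of $C_\theta$. Since $\WW^{m,p}(\G)$ is independent of the atlas by \autoref{prop:sob-ind-param}, it suffices to work with this fixed one.

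\textbf{Step 1 (membership and the estimate).} For $u\in \WW^{m,p}(\G)$ we have $\wt u_i\in \WW^{m,p}(\cV_i)$, and $\wt{(\theta u)}_i=\wt\theta_i\,\wt u_i$. The classical flat Leibniz rule for a product of a $\WW^{m,\infty}$ function with a $\WW^{m,p}$ function gives
\[
\partial^\alpha(\wt\theta_i\wt u_i)=\sum_{\beta\le\alpha}\binom{\alpha}{\beta}\partial^\beta\wt\theta_i\,\partial^{\alpha-\beta}\wt u_i, \qquad |\alpha|\le m.
\]
One can prove this by induction on $m$, starting from the case $m=1$: a Lipschitz function times a $\WW^{1,p}$ function, which follows by mollifying $\wt u_i$ on compact subsets. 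Hölder's inequality then yields $\|\wt\theta_i\wt u_i\|_{m,p;\cV_i}\le C\|\wt\theta_i\|_{\WW^{m,\infty}(\cV_i)}\|\wt u_i\|_{m,p;\cV_i}$. Raising to the $p$-th power and summing over $i$ in \eqref{scalar-Sobolev-norm} gives \eqref{eq:stab_product-by-smooth}.

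\textbf{Step 2 (the product formula).} By Step 1 with $|\alpha|=1$, we have $\nabla(\wt\theta\wt u)=\wt\theta\nabla\wt u+\wt u\nabla\wt\theta$ a.e.\ in $\cV$. Multiplying on the left by the matrix $\nabla\wt\bchi\,\wt\bg^{-1}$, which is defined a.e., and invoking \eqref{surface-gradient-param} yields \eqref{eq:product-by-smooth} a.e.\ on $\cU\cap\G$. Independence from the chart follows from \autoref{rem:diff-op-ind-param}, so the identity holds a.e.\ on $\G$.

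\textbf{Main obstacle.} The only delicate point is the Leibniz rule in the top-order term, where $\partial^\alpha\wt\theta_i$ with $|\alpha|=m$ is merely $\LL^\infty$ rather than continuous. I would handle this by noting that $\partial^\beta\wt\theta_i$ for $|\beta|=m-1$ is Lipschitz and hence in $\WW^{1,\infty}$. The inductive step then only ever multiplies a $\WW^{1,\infty}$ function by a $\WW^{1,p}$ function, which is exactly the base case.
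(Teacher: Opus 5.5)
Your proposal is correct and takes the route the paper has in mind. The paper gives no detailed proof, only the remark that the lemma is a straightforward consequence of the classical Leibniz rule in flat domains, and your chart-by-chart argument supplies exactly those details: pulling back to $\cV_i$, applying the Leibniz rule to a $\WW^{m,\infty}$ times $\WW^{m,p}$ product, summing over charts, and multiplying the flat product rule by $\nabla\wt\bchi\,\wt\bg^{-1}$ (including the chart shrinking justified via \autoref{prop:sob-ind-param}).
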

The following remark is a direct consequence of \autoref{def:Sobolev-spaces-manifold} for functions that are supported on one chart, which will be used to prove a density result (cf.~\autoref{thm:density-C^m}).
\begin{remark}[norm equivalence]\label{rem:equivalence-parametric-surface}
Let $p \in [1,\infty)$.
According to the the notation from \autoref{sec:diff-geo},
if $u \in \WW^{m,p}(\G)$ is supported on $\cW \cap \G$, then $\wt u \in \WW^{m,p}_0(\cV)$ and there exist positive constants $C_1$ and $C_2$ depending only on $\G$ and $m$ such that
\begin{equation}\label{eq:norm-equivalence}
C_1 \|u\|_{m,p;\G} \leq \|\wt u\|_{m,p;\cV} \leq C_2 \|u\|_{m,p;\G}.
\end{equation}
\end{remark}
\begin{thm}[density]\label{thm:density-C^m}
Let $p \in [1,\infty)$ and $m \in \NN$.
If $\G$ is of class $C^{m-1,1}$ (resp.~$C^m$), then $C^{m-1,1}(\G)$ (resp.~$C^m(\G)$) is dense in $\WW^{m,p}(\G)$.
\begin{proof}
Recall the notation introduced in \autoref{sec:diff-geo}.
Let $u \in \WW^{m,p}(\G)$.
Let $\{\theta_i\}_{i=1}^N$ be a $C^{m-1,1}$ (resp.~$C^m$) partition of unity associated with the covering $\{\cW_i\}_{i=1}^N$ of $\G$ \cite[Ch.~4, 11.~Theorem]{Shahshahani2016}.
By \autoref{lem:product-by-smooth} and \autoref{rem:equivalence-parametric-surface}, we know that $\wt\mho_i \in \WW^{m,p}_0(\cV_i)$, where $\mho_i := \theta_i u$ is supported on $\cW_i \cap \G$ and belongs to $\WW^{m,p}(\G)$.
Then, given $\varepsilon>0$, there exists $\wt\varphi_i \in C^\infty_c(\cV_i)$ such that $\|\wt\mho_i - \wt\varphi_i\|_{m,p;\cV_i} < \varepsilon/N$.
Extend each $\varphi_i: \Gamma \rightarrow \RR$ by $0$ on $\G$ and define $\varphi := \sum_{i=1}^N \varphi_i \in C^{m-1,1}(\G)$ (resp.~$C^m(\G)$).
Then, using the triangle inequality and recalling \autoref{rem:equivalence-parametric-surface} yields
\begin{equation*}
\|u - \varphi\|_{m,p;\G} \leq \sum_{i=1}^N \|\mho_i - \varphi_i\|_{m,p;\G} \cong \sum_{i=1}^N \|\wt\mho_i - \wt\varphi_i\|_{m,p;\cV_i} < \varepsilon,
\end{equation*}
which finishes the proof.
\end{proof}
\end{thm}
It is helpful to endow $\WW^{m,p}(\G)$ with an equivalent norm that is parametrization-independent and resembles its flat-domain counterpart.
This is the content of the following result.

\begin{proposition}[equivalent norm in $\WW^{m,p}(\G)$]\label{prop:param_ind-Sobolev-norm}
Assume $\G$ is of class $C^{m-1,1}$, for some $m \in \NN$, and let $p \in [1,\infty)$.
Then, the mapping
\begin{equation}\label{param_ind-scalar-Sobolev-norm}
u \mapsto \left(\sum_{k=0}^m \|\nablaM^k u \|_{0,p;\G} ^p\right)^{1/p},
\end{equation}
defines an equivalent norm in $\WW^{m,p}(\G)$.
Here, $\nablaM^k u$ denotes the \emph{$k$-fold weak extrinsic gradient} of $u$, which is the $k$-dimensional tensor defined by $(\nablaM^k u)_\bj = \uD_{j_{k}} \cdots \uD_{j_1} u$ for each $\bj = (j_i)_{i=1}^k \in \{1,\dotsc,d+1\}^k$;
we also write $\nablaM^0 u := u$.
\begin{proof}
Denote \eqref{param_ind-scalar-Sobolev-norm} by $||| \cdot |||_{m,p;\G}$.
We proceed in two steps.

\noindent\textbf{Step 1: Compact support.}
We first prove the result for functions supported on $\cW \cap \G$.
Let $u \in \WW^{m,p}(\G)$ supported on $\cW \cap \G$.
Then, by \autoref{rem:equivalence-parametric-surface}, $\wt u \in \WW^{m,p}_0(\cV)$ and $\|\wt u\|_{m,p;\cV} \cong \|u\|_{m,p;\G}$.
Recall from \eqref{surface-gradient-param} that $\wt{\nablaM u} = \nabla \wt\bchi \wt\bg^{-1} \nabla \wt u$ in $\cV$, or equivalently, $\nabla \wt u = (\nabla^T \wt\bchi)\wt{\nablaM u}$.
Hence, since $\wt\bchi$ is of class $C^{m-1,1}$ and $\bg^{-1}$ is of class $C^{m-2,1}$ (or $\LL^\infty$ if $m = 1$), it follows from the Leibniz rule for Sobolev spaces in flat domains (see \cite[\textsection 5.2.3-(iv)]{Evans2010}) that 
\begin{equation*}
\|\wt{\nablaM u}\|_{m-1,p;\cV} \cong \|\nabla \wt u\|_{m-1,p;\cV}.
\end{equation*}
Hence, by ``a change of variables'' (cf.~\cite[Theorem 3.1.7]{Morrey2008}, \cite[Exercise 4.2.11]{Weber2018-lecturenotes}), we deduce that
\begin{equation*}
\|u\|_{m,p;\G}
\cong \|\wt u\|_{m,p;\cV}
\cong \|\wt u\|_{0,p;\cV} + \|\nabla \wt u\|_{m-1,p;\cV}
\cong \|\wt u\|_{0,p;\cV} + \|\wt{\nablaM u}\|_{m-1,p;\cV}
\cong \|u\|_{0,p;\G} + \|\nablaM u\|_{m-1,p;\G}.
\end{equation*}
Since $\nablaM u \in \WW^{m-1,p}(\G)$ (i.e.,~$\uD_i u \in \WW^{m-1,p}(\G)$ for each $i=1,\dotsc,d+1$), we can proceed recursively to finally obtain
\begin{equation*}
\|u\|_{m,p;\G} \cong \sum_{k=0}^m \|\nablaM^k u \|_{0,p;\G} \cong |||u|||_{m,p;\G},
\end{equation*}
which finishes the proof of Step 1.

\noindent\textbf{Step 2: General case.}
As in the proof of \autoref{thm:density-C^m}, let $\mho_i := \theta_i u \in \WW^{m,p}(\G)$ be the ``localization'' of $u$ to $\cW_i \cap \G$.
Using the triangle inequality and applying Step 1 to each $\mho_i$, it follows that
\begin{equation*}
\|u\|_{m,p;\G}
\leq \sum_{i=1}^N \|\mho_i\|_{m,p;\G}
\cong \sum_{i=1}^N |||\mho_i|||_{m,p;\G}
\lesssim |||u|||_{m,p;\G},
\end{equation*}
where in the last inequality we have utilized identity \eqref{eq:product-by-smooth} of \autoref{lem:product-by-smooth} $m$-times and bound the first $m$ exterior derivatives of $\theta$.
Similarly, we have
\begin{equation*}
|||u|||_{m,p;\G}
\leq \sum_{i=1}^N |||\mho_i|||_{m,p;\G}
\cong \sum_{i=1}^N \|\mho_i\|_{m,p;\G}
\lesssim \|u\|_{m,p;\G},
\end{equation*}
where the last inequality is due to estimate \eqref{eq:stab_product-by-smooth}.
This finishes the proof.
\end{proof}
\end{proposition}
Throughout the rest of the paper, it will be usually convenient to utilize \eqref{param_ind-scalar-Sobolev-norm} as the ``main'' norm on $\WW^{m,p}(\G)$ and consequently we will also denote it by $\|\cdot\|_{m,p;\G}$ when no confusion arises.
Notice that this equivalent norm naturally induces a notion of Sobolev norms on subsets of $\G$:
given $\gamma \subseteq \G$, a measurable set with respect to the Lebesgue measure on $\G$, we write
\begin{equation*}
\|v\|_{m,p;\gamma} := \left(\sum_{k=0}^m \|1_\gamma \, \nablaM^k v \|_{0,p;\G} ^p\right)^{1/p}, \qquad \forall v \in \WW^{m,p}(\G),
\end{equation*}
where $1_\gamma$ denotes the characteristic function of $\G$.

\subsection{Comparison between different definitions}
We now comment on the comparison between our adopted definition \autoref{def:Sobolev-spaces-manifold} of Sobolev spaces and a few others found in the literature.

\begin{remark}[comparison with $H^p_m(\G)$ of Hebey and Robert \cite{HebeyRobert2008}]
\autoref{thm:density-C^m} and \autoref{prop:param_ind-Sobolev-norm} show that $\WW^{m,p}(\G)$ (cf.~\autoref{def:Sobolev-spaces-manifold}) coincides with the space $H^p_m(\G)$ of \cite[Definition 2.1, Proposition 2.2]{HebeyRobert2008}.
We stress that although the space $H^p_m(\G)$ of \cite{HebeyRobert2008} was introduced for $\G$ of class $C^\infty$, it can be easily seen that its definition is still valid for manifolds of appropriate limited regularity.
\end{remark}
From \cite[Lemma B.5]{BouckNochettoYushutin2024} (see also \cite[Proposition 12]{BonitoDemlowNochetto2020}) and a density argument (cf.~\autoref{thm:density-C^m} and \autoref{prop:param_ind-Sobolev-norm}) we know that the integration-by-parts formula:
for each $u \in \WW^{1,1}(\G)$ and $\bvarphi \in \bC^1(\G)$
\begin{equation}\label{eq:int-by-parts}
\int_\Gamma u  \, \divG \bvarphi = - \int_\Gamma \nablaG u \cdot \bvarphi + \int_\Gamma \tr(\bB) u \bvarphi \cdot \bnu,
\end{equation}
holds for $\G$ of class $C^2$.
This shows that for $\G$ of class $C^2$, \autoref{def:Sobolev-spaces-manifold} coincides with Dziuk and Elliott \cite[Definition 2.11]{DziukElliott2013}.
For $\G$ of class $C^{1,1}$ the argument is more delicate;
\cite[Lemma B.5]{BouckNochettoYushutin2024} and \cite[Proposition 12]{BonitoDemlowNochetto2020} are proved by means of extrinsic calculus manipulations that hinge on the fact that the distance function $\sfd$ to $\G$ is of class $C^2$ near $\G$ if $\G$ is of class $C^2$ \cite[Theorem 1]{Foote1984}, whereas if $\G$ is only of class $C^{1,1}$ the authors are unaware of any readily available reference that states that $\sfd$ inherits such regularity near $\G$.
With these considerations in mind, we proceed to prove \eqref{eq:int-by-parts} for $\G$ of class $C^{1,1}$ via a parametric approach, thereby reconciling \autoref{def:Sobolev-spaces-manifold} with that of Dziuk and Elliott \cite[Definition 2.11]{DziukElliott2013} for $\G$ of class $C^{1,1}$.
As an added benefit, we also infer that $\WW^{2,p}(\G)$-solutions of general scalar elliptic problems are also strong solutions even when $\G$ is only $C^{1,1}$;
see \autoref{subsec:main-results}.

\begin{proposition}[integration by parts on $C^{1,1}$-manifolds]
If $\G$ is of class $C^{1,1}$, then integration-by-parts formula \eqref{eq:int-by-parts} holds true.
\begin{proof}
For the rest of the proof we embrace the Einstein summation convention.
We restrict ourselves to proving that for each $u \in C^{1,1}(\G)$ and $\bvarphi \in \bC^{0,1}_t(\G)$ it holds that
\begin{equation}\label{eq:int-by-parts-tangential}
\int_\G u \, \divG \bvarphi = - \int_\G \nablaG u \cdot \bvarphi,
\end{equation}
from which \eqref{eq:int-by-parts} easily follows from $\bvarphi = \bP\bvarphi + (\bvarphi \cdot \bnu)\bnu$ for $\bvarphi \in \bC^1(\G)$ and the product rule for covariant derivatives
\begin{equation*}
\divG( (\bvarphi \cdot \bnu)\bnu )
= (\bvarphi \cdot \bnu) \divG(\bnu ) + \bnu \cdot \nablaG(\bvarphi \cdot \bnu)
= (\bvarphi \cdot \bnu) \tr(\bB),
\end{equation*}
and a density argument for $u \in \WW^{1,1}(\G)$.

Let us first consider an arbitrary $C^{1,1}$ chart $(\cV,\cU,\wt\bchi)$ of $\G$ and $u \in C^{1,1}(\G)$ compactly supported on $\cU \cap \G$.
Then, for each $\bvarphi \in \bC^{0,1}_t(\G)$, recalling identities \eqref{change-of-variable}, \eqref{surface-gradient-param} and \eqref{surface-divergence-param}, and integrating by parts we obtain in first instance that
\begin{multline*}
\int_\G u \, \divG \bvarphi
= \int_\cV \wt a_{\bchi} \wt u \, \wt g^{ij} \partial_i \wt\bchi \cdot \partial_j \wt\bvarphi
= - \int_\cV \partial_j (\wt a_{\bchi} \wt u \, \wt g^{ij} \partial_i \wt\bchi) \cdot  \wt\bvarphi
= - \int_\cV \wt a_{\bchi} (\partial_j \wt u) \wt g^{ij} \partial_i \wt\bchi \cdot  \wt\bvarphi - \int_\cV \wt u \partial_j (\wt a_{\bchi} \, \wt g^{ij} \partial_i \wt\bchi) \cdot  \wt\bvarphi\\
= - \int_\cV \wt a_{\bchi} (\nabla \wt\bchi \wt\bg^{-1} \nabla \wt u) \cdot \wt\bvarphi - \int_\cV \wt a_{\bchi} \wt u \, \wt{\DeltaG\bchi} \cdot  \wt\bvarphi
= - \int_\G \nablaG u \cdot \bvarphi - \int_\cV \wt a_{\bchi} \wt u \, \wt{\DeltaG\bchi} \cdot  \wt\bvarphi,
\end{multline*}
where $\DeltaG \bchi := (\DeltaG \chi_k)_{k=1}^{d+1}$.
Since the columns of $\nabla \bchi$ form a basis of the tangent plane to $\G$, it is enough to show that $\wt{\DeltaG \bchi} \cdot \partial_p \wt\bchi = 0$ a.e.~for each $p=1,\dotsc,d+1$ in order to deduce \eqref{eq:int-by-parts-tangential}.

In what follows, solely to avoid cluttered expressions, we omit the use of $\sim$ to express that computations are carried out in parametric domain.
Recalling that $a := a_{\bchi} = \sqrt{\det \bg}$ and utilizing Jacobi's formula it is easy to see that $\partial_j  a =  \frac{1}{2}  a g^{kl} \partial_j g_{kl}$ a.e.~and so ${\DeltaG \bchi}
= \frac{1}{a} \partial_j  a  g^{ij} \partial_i \bchi + \partial_j ( g^{ij} \partial_i \bchi)
= \frac{1}{2} g^{kl} \partial_j g_{kl} g^{ij} \partial_i \bchi + \partial_j g^{ij} \partial_i \bchi + g^{ij} \partial_j \partial_i \bchi$.
Hence, also using the facts that $g^{ij} g_{ip} = \delta_{jp}$ and $g_{ip} = \partial_i \bchi \cdot \partial_p \bchi$ and, by the product rule, $\partial_j g^{ij} g_{ip} = - g^{ij} \partial_j g_{ip}$ and $\partial_j g_{ip} =  \partial_j \partial_i \bchi \cdot \partial_p \bchi + \partial_i \bchi \cdot \partial_j \partial_p \bchi$, we deduce that a.e.~
\begin{multline*}
\DeltaG \bchi \cdot \partial_p \bchi
= \frac{1}{2} g^{kl} \partial_j g_{kl} g^{ij} g_{ip} + \partial_j g^{ij} g_{ip} + g^{ij} \partial_j \partial_i \bchi \cdot \partial_p \bchi
= \frac{1}{2} g^{kl} \partial_p g_{kl} + \partial_j g^{ij} g_{ip} + g^{ij} \partial_j \partial_i \bchi \cdot \partial_p \bchi\\
= \frac{1}{2} g^{ij} \partial_p g_{ij} - g^{ij} \partial_j g_{ip} + g^{ij} \partial_j g_{ip} - g^{ij} \partial_i \bchi \cdot \partial_j \partial_p \bchi
= \frac{1}{2} g^{ij}  \left( \partial_p \partial_i \bchi \cdot \partial_j \bchi - \partial_p \partial_j \bchi \cdot \partial_i \bchi \right)
= 0,
\end{multline*}
where the last equality is due to the symmetry of $g^{ij}$ and the skew-symmetry in $(i,j)$ of the term in parentheses.
This finishes the proof of \eqref{eq:int-by-parts-tangential} for $u$ compactly supported on $\cU \cap \G$.
For general $u \in C^{1,1}(\G)$, we proceed as follows:
let $\{\theta_i\}_{i=1}^N$ be a $C^{1,1}$ partition of unity associated with the covering $\{\cW_i\}_{i=1}^N$ of $\G$ (cf.~\autoref{sec:diff-geo}) and define $\mho_i := \theta_i u \in C^{1,1}(\G)$, which is supported on $\overline{\cW_i \cap \G} \subseteq \cU_i \cap \G$.
We then obtain
\begin{equation*}
\int_\G u \, \divG \bvarphi
= \sum_{i=1}^N \int_\G \mho_i \, \divG \bvarphi
= - \sum_{i=1}^N \int_\G \nablaG \mho_i \cdot \bvarphi
= - \int_\G \nablaG \left(\sum_{i=1}^N\mho_i\right) \cdot \bvarphi
= - \int_\G \nablaG u \cdot \bvarphi.
\end{equation*}
This finishes the proof.
\end{proof}
\end{proposition}
The following density result, which is a slight improvement over \cite[Lemma~3.7]{Miura-PartI}), will prove useful when dealing with vector-valued hypersurface PDEs in the second part of this project \cite{BenavidesNochettoShakipov2025-b}.
\begin{lemma}[density]\label{lem:density-tangential}
Let $p \in [1,\infty)$, $m \in \NN$ and an integer $0 \leq k \leq m-1$.
If $\G$ is of class $C^m$, then the space of everywhere tangential $C^{m-1}$-vector fields $\bC^{m-1}_t(\G)$ is dense in $\bW^{k,p}_t(\G)$.
Similarly, if $m \geq 2$ and $\G$ is of class $C^{m-1,1}$, then the space of a.e.~tangential $C^{m-2,1}$-vector fields $\bC^{m-2,1}_t(\G)$ is dense in $\bW^{k,p}_t(\G)$.
\begin{proof}
Assume $\G$ is of class $C^m$ (resp.~$C^{m-1,1}$ with $m \geq 2$) and let $\bu \in \bW^{k,p}_t(\G)$.
Then, by \autoref{thm:density-C^m} (density), there is a sequence $\{\bvarphi_n\}_{n \in \NN} \subseteq \bC^m(\G)$ (resp.~$\subseteq \bC^{m-1,1}(\G)$) such that $\lim_{n\to\infty} \|\bvarphi_n - \bu\|_{k,p;\G} = 0$.
Then, since $\bP = \bI - \bnu \otimes \bnu$ is of class $C^{m-1}$ (resp.~$C^{m-2,1}$), so is the tangential field $\bP \bvarphi_n$.
Consequently, noticing that $\bP\bu = \bu$ a.e.~on $\G$, we have by \autoref{lem:product-by-smooth} (product rule) that
\begin{equation*}
\|\bP\bvarphi_n - \bu\|_{k,p;\G}
= \|\bP(\bvarphi_n - \bu)\|_{k,p;\G}
\leq C \|\bvarphi_n - \bu\|_{k,p;\G}
\xrightarrow[]{n \to \infty} 0.
\end{equation*}
This finishes the proof.
\end{proof}
\end{lemma}

The following result will be employed in the proof of \autoref{thm:div-form-wp} (well-posedness of the operator $-\divG(\bsfA \nablaG \cdot)$ in $\WW^{1,p}_\#(\G)$) when the problem is written in parametric domain;
see \autoref{subsec:lp-reg-divform} for the proof.
\begin{remark}[tangential components]\label{rem:tangential-expansion}
If $p \in (1,\infty)$ and $\G$ is of class $C^{m,1}$ for some integer $m \geq 0$, then \autoref{lem:product-by-smooth} (product rule in $W^{m.p}(\G)$) implies: for each $\bv \in \bW^{m,p}_t(\G) := \WW^{m,p}_t(\G,\RR^{d+1})$ supported on $\cW \cap \G$, there exists a unique $\wh\bv = (\wh v_i)_{i=1}^d \in \bW^{m,p}(\cV) := \WW^{m,p}(\cV,\RR^d)$ supported on $\cX := \wt\bchi^{-1}(\cW \cap \G) \subseteq \cV$ such that
\begin{equation}\label{tangent-expansion}
\wt\bv = (\nabla\wt\bchi) \wh\bv \quad \text{on $\cV$},
\end{equation}
where $\wt\bv = \bv \circ \wt\bchi \in \WW^{m,p}(\cV,\RR^{d+1})$;
equivalently, $\wh\bv = \bg^{-1}(\nabla^T \wt\bchi) \wt\bv$ on $\cV$.
Moreover,
\begin{equation}\label{equiv-norms-manifold-param-tangentpart}
\|\wh\bv\|_{m,p;\cX} \cong \|\wt\bv\|_{\WW^{m,p}( \cX ,\RR^{d+1})} \cong \|\bv\|_{m,p;\G}.
\end{equation}
where the implicit equivalence constants depend only on $\G$ and $p$.

\begin{proof}
Identity \eqref{tangent-expansion} follows by noticing that the columns of $\nabla\wt\bchi$ form a basis of the tangent plane to $\G$.
In turn, \eqref{equiv-norms-manifold-param-tangentpart} is a consequence of \autoref{rem:equivalence-parametric-surface} (norm equivalence) and the Leibniz rule for Sobolev spaces in flat domains.
\end{proof}
\end{remark}

\subsection{Sobolev embeddings on \texorpdfstring{$\G$}{Γ}}
We now state various versions of Sobolev embedding results on $\G$.
For the rest of this section $p \in [1,\infty)$ and $m \in \NN$ are arbitrary unless stated otherwise, and whenever we mention the space $\WW^{m,p}(\G)$ we implicitly assume $\G$ is of class $C^{m-1,1}$.

Since $\G$ is compact, it follows that the Sobolev embeddings of the first type (Gagliardo--Nirenberg--Sobolev) on $\G$ behave just like their counterparts for a bounded open subset of $\RR^d$ with $C^1$ boundary.
\begin{proposition}[Gagliardo–Nirenberg–Sobolev inequality {\cite[\textsection 4.3]{HebeyRobert2008}}]\label{prop:GNG-embeddings}
Let $1 \leq p < q < \infty$, and any integers $0 \leq m < k$ be such that
\begin{equation*}
k - \frac{d}{p} \ge m - \frac{d}{q}.
\end{equation*}
If $\G$ is of class $C^{k-1,1}$, then $\WW^{k,p}(\G)$ is continuously embedded in $\WW^{m,q}(\G)$.

As a consequence, if $kp < d$ and $1 \leq q \leq \frac{dp}{d-kp}$, then $\WW^{k,p}(\G)$ is continuously embedded in $\LL^{q}(\G)$.
\end{proposition}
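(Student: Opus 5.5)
The plan is to localize with a partition of unity and reduce to the flat Gagliardo--Nirenberg--Sobolev embedding on the parameter domains $\cV_i$. Fix the finite atlas $\{(\cV_i,\cU_i,\wt\bchi_i)\}_{i=1}^N$ of class $C^{k-1,1}$ and the covering $\{\cW_i\}_{i=1}^N$ from \autoref{sec:diff-geo}. Let $\{\theta_i\}_{i=1}^N$ be a $C^{k-1,1}$ partition of unity subordinate to $\{\cW_i\}$, as in the proof of \autoref{thm:density-C^m}. For $u\in\WW^{k,p}(\G)$ write $u=\sum_i \mho_i$ with $\mho_i:=\theta_i u$. By \autoref{lem:product-by-smooth}, each $\mho_i$ lies in $\WW^{k,p}(\G)$ with $\|\mho_i\|_{k,p;\G}\le C\|u\|_{k,p;\G}$. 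Since $\mho_i$ is supported on $\cW_i\cap\G$, \autoref{rem:equivalence-parametric-surface} gives $\wt\mho_i\in\WW^{k,p}_0(\cV_i)$ with $\|\wt\mho_i\|_{k,p;\cV_i}\lesssim\|u\|_{k,p;\G}$.

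The second step is the flat embedding. Extending $\wt\mho_i$ by zero, we get an element of $\WW^{k,p}(\RR^d)$ with compact support in the bounded set $\cV_i$. Apply the classical embedding $\WW^{k,p}(\RR^d)\hookrightarrow\WW^{m,q}(\RR^d)$, valid for $k-\frac dp\ge m-\frac dq$, $p<q<\infty$. This follows by iterating the Gagliardo--Nirenberg--Sobolev inequality on derivatives of order up to $m$. In the borderline cases $(k-j)p\ge d$ one first passes to a smaller exponent on the bounded support and then uses the subcritical embedding. The result is $\|\wt\mho_i\|_{m,q;\cV_i}\lesssim\|\wt\mho_i\|_{k,p;\cV_i}$.

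The third step returns to $\G$. Since $\wt\mho_i$ is supported in $\wt\bchi_i^{-1}(\overline{\cW_i}\cap\G)$ and $m<k$, the function $\mho_i$ lies in $\WW^{m,q}(\G)$. This uses \autoref{def:Sobolev-spaces-manifold}, the change of coordinates argument of \autoref{prop:sob-ind-param} to handle the other charts, and the chart regularity $C^{k-1,1}\supseteq C^{m-1,1}$. We also get $\|\mho_i\|_{m,q;\G}\lesssim\|\wt\mho_i\|_{m,q;\cV_i}$ by the reverse inequality in \autoref{rem:equivalence-parametric-surface}. Summing over $i$ and using the triangle inequality yields $\|u\|_{m,q;\G}\lesssim\|u\|_{k,p;\G}$.

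The corollary is the case $m=0$. If $kp<d$ and $q=\frac{dp}{d-kp}$, then $k-\frac dp=-\frac dq$. For $q$ smaller than this value, use Hölder's inequality on the finite-measure set $\G$; for $q\le p$ it is immediate.

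The step that needs the most care is the borderline case $k-\frac dp= m-\frac dq$ with some intermediate index $j$ satisfying $(k-j)p=d$. There the flat embedding must be justified through the compact support: Hölder's inequality lowers $p$ slightly before embedding, which is possible because $q<\infty$. A second point to check is that the Sobolev indices never exceed what the $C^{k-1,1}$ regularity of the transition maps allows in the change of coordinates. This holds since only orders $m\le k$ appear.
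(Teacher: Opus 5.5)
Your proof is correct, but it takes a different route from the paper, because the paper gives no argument for this proposition. It simply cites \cite[\textsection 4.3]{HebeyRobert2008}. The only support offered is the remark that on a compact $\G$ these embeddings behave as on a bounded flat domain, together with the identification of $\WW^{m,p}(\G)$ with Hebey--Robert's $H^p_m(\G)$, a space introduced there for $C^\infty$ manifolds.

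Your localization is the standard reduction behind such compact-manifold results, carried out for the paper's chart-based \autoref{def:Sobolev-spaces-manifold}. It consists of a $C^{k-1,1}$ partition of unity, zero extension of $\wt\mho_i\in\WW^{k,p}_0(\cV_i)$ to $\RR^d$, the Euclidean embedding, and the return to $\G$ via the change-of-coordinates argument of \autoref{prop:sob-ind-param} together with \autoref{rem:equivalence-parametric-surface}.

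What your route buys over the citation is an explicit check that $C^{k-1,1}$ really suffices. The cut-offs must be $C^{k-1,1}$ to preserve $\WW^{k,p}$ (\autoref{lem:product-by-smooth}). The transition maps only need to be $C^{m-1,1}$ to transport $\WW^{m,q}$, which holds since $m\le k-1$. You wrote $C^{k-1,1}\supseteq C^{m-1,1}$; as an inclusion of function classes it goes the other way, though your meaning is clear. The citation buys brevity and the full Euclidean exponent bookkeeping.

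On that bookkeeping, your sketch of the borderline case does not work verbatim when $p=1$ and $k-j\ge d$, since the exponent cannot be lowered below $1$. There you first trade a derivative, e.g.\ $\WW^{k-j,1}\hookrightarrow\WW^{k-j-1,d/(d-1)}$, and only then lower the exponent. Alternatively, simply invoke the complete Sobolev embedding theorem on $\RR^d$, which already covers every case you need.
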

The Rellich--Kondrachov compactness theorem also extends from flat bounded domains to compact $\G$.
We refer to \cite[p.~389]{HebeyRobert2008} for the definition of the H\"older space $C^{0,\alpha}(\G)$.

\begin{proposition}[compact Sobolev embedding {\cite[Theorem 8.1]{HebeyRobert2008}}]\label{prop:Rellich-Kondrachov}
Let $1 \leq p < \infty$ and integers $0 \leq m < k$ be such that $kp < d$. If $\G$ is of class $C^{k-1,1}$, then $\WW^{k,p}(\G)$ is compactly embedded in $\WW^{m,q}(\G)$ for each $1 \leq q < \frac{dp}{d-(k-m)p}$.
In particular, $\WW^{1,p}(\G)$ is compactly embedded in $\LL^q(\G)$ for $1 \leq p < d$ and $1 \leq q < \frac{dp}{d-p}$ for $\G$ of class $C^{0,1}$.
\end{proposition}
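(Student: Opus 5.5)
The plan is to reduce the statement to the flat Rellich--Kondrachov theorem on each chart, via the partition of unity already used in \autoref{thm:density-C^m} and \autoref{prop:param_ind-Sobolev-norm}. Let $\{u_n\}$ be bounded in $\WW^{k,p}(\G)$ and let $\{\theta_i\}_{i=1}^N$ be a $C^{k-1,1}$ partition of unity subordinate to $\{\cW_i\}_{i=1}^N$. Put $\mho_{i,n} := \theta_i u_n$. By \autoref{lem:product-by-smooth}, each sequence $\{\mho_{i,n}\}_n$ is bounded in $\WW^{k,p}(\G)$ and supported in $\cW_i \cap \G$. By \autoref{rem:equivalence-parametric-surface}, the pullbacks $\wt\mho_{i,n} := \mho_{i,n}\circ\wt\bchi_i$ are then bounded in $\WW^{k,p}_0(\cV_i)$. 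Each is supported in the fixed compact set $\wt\bchi_i^{-1}(\overline{\cW_i}\cap\G)\subset\cV_i$.

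Next I apply the flat compact embedding. After extending by zero, the functions live in a ball $B_i \supset \cV_i$, a bounded smooth domain, so no regularity of $\partial\cV_i$ is needed. The classical Rellich--Kondrachov theorem for $\WW^{k,p}_0$ gives compactness of $\WW^{k,p}_0(B_i) \hookrightarrow \WW^{m,q}(B_i)$ for $q < dp/(d-(k-m)p)$. For $k-m\ge 2$ I would chain through intermediate orders: first $\WW^{k,p}\hookrightarrow \WW^{m+1,r}$ continuously, with $r$ chosen suitably, and then $\WW^{m+1,r} \hookrightarrow \WW^{m,q}$ compactly. Alternatively I would apply the standard statement for derivatives of order up to $m$ directly. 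A diagonal extraction over $i=1,\dots,N$ then yields a single subsequence along which every $\wt\mho_{i,n}$ converges in $\WW^{m,q}(\cV_i)$, with supports in the fixed compact set.

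Finally I transfer back to $\G$. Since the limits are supported in $\wt\bchi_i^{-1}(\overline{\cW_i}\cap\G)$, \autoref{rem:equivalence-parametric-surface} applied with $(m,q)$ in place of $(k,p)$ shows that $\{\mho_{i,n}\}_n$ is Cauchy in $\WW^{m,q}(\G)$. This uses that $\G$ is $C^{k-1,1}$, hence at least $C^{m-1,1}$, so $\WW^{m,q}(\G)$ is defined. Summing, $u_n = \sum_i \mho_{i,n}$ is Cauchy, hence convergent, in $\WW^{m,q}(\G)$. The special case follows with $k=1$, $m=0$ and $\G\in C^{0,1}$, for which the charts are only bi-Lipschitz, and this suffices because $\LL^q$ and $\WW^{1,p}$ are preserved under bi-Lipschitz changes of variables.

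The main obstacle is bookkeeping rather than a genuine difficulty. I need to ensure that the chart-wise norm equivalence \eqref{eq:norm-equivalence} holds at the lower index $(m,q)$ with constants depending only on $\G$, and that the flat compact embedding is invoked in the correct form on $\WW^{k,p}_0$ of a ball. The zero extension handles the lack of boundary regularity of $\cV_i$.
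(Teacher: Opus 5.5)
Your proposal is correct. The paper gives no argument of its own and simply cites Hebey--Robert \cite[Theorem 8.1]{HebeyRobert2008}, stated there for smooth manifolds, and the standard argument behind that result is this same reduction via a partition of unity and charts to the flat Rellich--Kondrachov theorem. Your version also checks explicitly, through \autoref{lem:product-by-smooth} and \autoref{rem:equivalence-parametric-surface}, that $C^{k-1,1}$ charts suffice; the one point to add is that the differences $\mho_{i,n}-\mho_{i,l}$ must lie in $\WW^{m,q}(\G)$ before you apply \eqref{eq:norm-equivalence} at index $(m,q)$, which follows from \autoref{prop:GNG-embeddings} or a chart-wise change of variables.
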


\begin{proposition}[compact embedding in H\"older spaces {\cite[Theorem 8.2]{HebeyRobert2008}}]\label{prop:Holder-compactly-embedded}
Let $d < p < \infty$.
If $\G$ is of class $C^{0,1}$, then $\WW^{1,p}(\G)$ is compactly embedded in the H\"older space $C^{0,\alpha}(\G)$ for all $\alpha \in (0,1-\frac{d}{p})$.
\end{proposition}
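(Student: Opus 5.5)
The statement to be proved is the compact embedding $\WW^{1,p}(\G) \hookrightarrow C^{0,\alpha}(\G)$ for $d<p<\infty$, $\G$ of class $C^{0,1}$ and $0<\alpha<1-\frac dp$. I would reduce it to the flat Morrey inequality and the compactness of Hölder embeddings with a strictly smaller exponent, via localization with the finite atlas and a Lipschitz partition of unity.

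\textbf{Step 1 (localization).} Take the atlas $\{(\cV_i,\cU_i,\wt\bchi_i)\}_{i=1}^N$ and covering $\{\cW_i\}$ from \autoref{sec:diff-geo}, and a $C^{0,1}$ partition of unity $\{\theta_i\}$ subordinate to $\{\cW_i\}$. For $u\in\WW^{1,p}(\G)$, \autoref{lem:product-by-smooth} gives $\theta_i u\in\WW^{1,p}(\G)$ with $\|\theta_i u\|_{1,p;\G}\lesssim\|u\|_{1,p;\G}$. By \autoref{rem:equivalence-parametric-surface}, $\wt{\theta_i u}\in\WW^{1,p}_0(\cV_i)$ is supported in the compact set $\wt\bchi_i^{-1}(\overline{\cW_i}\cap\G)$, and its norm is equivalent to $\|\theta_i u\|_{1,p;\G}$. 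I extend it by zero to all of $\RR^d$.

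\textbf{Step 2 (flat Morrey).} The flat Morrey inequality gives $\wt{\theta_i u}\in C^{0,\beta}(\RR^d)$ with $\beta:=1-\frac dp$ and $\|\wt{\theta_i u}\|_{C^{0,\beta}}\lesssim\|\wt{\theta_i u}\|_{1,p;\cV_i}$. Since $\wt\bchi_i^{-1}$ is Lipschitz on $\overline{\cW_i}\cap\G$, composing with it preserves Hölder continuity with the same exponent. Hence $\theta_i u\in C^{0,\beta}(\G)$, extended by zero outside $\cW_i$, with $\|\theta_i u\|_{C^{0,\beta}(\G)}\lesssim\|u\|_{1,p;\G}$. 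Summing over $i$ gives the continuous embedding $\WW^{1,p}(\G)\hookrightarrow C^{0,\beta}(\G)$.

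\textbf{Step 3 (compactness).} For $\alpha<\beta$ the inclusion $C^{0,\beta}(\G)\hookrightarrow C^{0,\alpha}(\G)$ is compact. A bounded set in $C^{0,\beta}(\G)$ is bounded and equicontinuous on the compact set $\G$, so Arzelà–Ascoli extracts a uniformly convergent subsequence. The interpolation estimate
\[
[v]_{C^{0,\alpha}}\le 2^{1-\alpha/\beta}\,\|v\|_{\infty}^{1-\alpha/\beta}\,[v]_{C^{0,\beta}}^{\alpha/\beta},
\]
applied to differences of the subsequence, upgrades uniform convergence to convergence in $C^{0,\alpha}(\G)$. Composing the continuous embedding of Step 2 with this compact inclusion gives the claim.

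\textbf{Main obstacle.} The delicate point is in Step 2: making sure the Hölder seminorm on $\G$, taken with respect to the Euclidean distance in $\RR^{d+1}$, is controlled by the sum of the local ones. For points $\bx,\bx'$ in the same $\cW_i$ this follows from the bi-Lipschitz property of the charts. For points that are far apart, I would use a Lebesgue number $\delta>0$ of the covering. If $|\bx-\bx'|<\delta$, both points lie in a common $\cU_i$ and the chart estimate applies. If $|\bx-\bx'|\ge\delta$, the difference quotient is bounded by $2\delta^{-\beta}\|u\|_\infty$.
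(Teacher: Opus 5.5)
Your argument is correct, but it does not follow the paper, because the paper gives no proof. It simply invokes \cite[Theorem 8.2]{HebeyRobert2008}. That result is stated for smooth compact Riemannian manifolds, with H\"older continuity measured in the geodesic distance, and the paper tacitly assumes it survives for $\G$ of class $C^{0,1}$. Your localization argument supplies that extension, using only:
\begin{itemize}
\item the bi-Lipschitz character of the charts;
\item a Lipschitz partition of unity, through \autoref{lem:product-by-smooth} and \autoref{rem:equivalence-parametric-surface};
\item the flat Morrey inequality on $\RR^d$;
\item Arzel\`a--Ascoli combined with the interpolation bound $[v]_{C^{0,\alpha}} \le 2^{1-\alpha/\beta}\|v\|_\infty^{1-\alpha/\beta}[v]_{C^{0,\beta}}^{\alpha/\beta}$, which you state correctly.
\end{itemize}
The citation is shorter. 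Your route shows explicitly that no smoothness of the metric $\wt\bg$ enters, and that $C^{0,1}$ regularity of $\G$ really suffices.

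Two points need tightening.

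First, in the ``main obstacle'' paragraph the case split is not quite right for a single term $\theta_i u$. Its Morrey estimate is available only in chart $i$, and only where $\wt\bchi_i^{-1}$ is Lipschitz, for instance on the compact set $\overline{\cW_i}\cap\G$. There a locally Lipschitz map is globally Lipschitz. A Lebesgue number of the cover only places nearby points in \emph{some} common chart, not necessarily chart $i$. The clean split uses $\eta_i := \operatorname{dist}(\operatorname{supp}\theta_i,\G\setminus\cW_i)>0$:
\begin{itemize}
\item if $|\bx-\bx'|<\eta_i$, then either both points lie in $\cW_i$, where the chart-$i$ estimate applies, or both lie outside $\operatorname{supp}\theta_i$, where the difference vanishes;
\item otherwise the difference quotient is at most $2\eta_i^{-\beta}\sup_\G|\theta_i u|$.
\end{itemize}

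Second, the paper defers the definition of $C^{0,\alpha}(\G)$ to Hebey--Robert, who use the geodesic distance, while you use the Euclidean distance of $\RR^{d+1}$. Add one line noting that on a compact connected Lipschitz hypersurface the intrinsic and Euclidean distances are bi-Lipschitz equivalent. This follows by the same chart-plus-compactness argument, so both choices give the same space with equivalent norms.
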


\begin{remark}[compact Sobolev embedding]\label{rem:W1p-compactly-embedded-in-L^p}
Since $\frac{dp}{d-p} \to \infty$ as $p \to d$, we have from \autoref{prop:Rellich-Kondrachov} and \autoref{prop:Holder-compactly-embedded} that $\WW^{1,p}(\G)$ is compactly embedded in $\LL^p(\G)$ for all $1 \leq p < \infty$.
\end{remark}

\section{The scalar elliptic theory} \label{sec:scal-ell}
%
In this section we study the well-posedness and $\LL^p$-based Sobolev regularity for problem \eqref{eq:gsep}.
We first focus on problems in divergence form in \autoref{subsec:lp-reg-divform}.
In \autoref{subsec:lp-reg-GSES} we combine the results from \autoref{subsec:lp-reg-divform} with the Fredholm Alternative to derive the corresponding results for general scalar elliptic problems \eqref{eq:gsep}.

\subsection{\texorpdfstring{$\LL^p$}{Lᵖ}-based regularity for problems in divergence form} \label{subsec:lp-reg-divform}
%
Recall the notation for Lebesgue and Sobolev spaces $\LL^p_\#(\G)$ and $\WW^{m,p}_\#(\G)$ introduced in \autoref{sec:Sobolev-spaces-manifolds}.
The $\LL^2$-based weak formulation for problem $-\divG(\bsfA\nablaG \bu) = f$ reads as follows:
given $f \in (\HH^1_\#(\G))'$ and $\bsfA \in \bbL^\infty(\G)$ that satisfies the ellipticity condition \eqref{A-ellipticity-Gamma}, find $u \in \HH^1_\#(\G)$ such that
\begin{equation}\label{eq:H1-AdivnablaG}
\int_\G \bsfA \nablaG u \cdot \nablaG v = \langle f , v \rangle, \qquad \forall v \in \HH^1_\#(\G).
\end{equation}
The well-posedness of \eqref{eq:H1-AdivnablaG} for $\G$ of class $C^{0,1}$ follows from a combination of \eqref{A-ellipticity-Gamma} and Poincaré--Friedrichs inequality \cite[Lemma 2]{BonitoDemlowNochetto2020}, and the Lax--Milgram theorem.
In particular, there exists a constant $C>0$ depending only on $\G$ and $\bsfA$ such that
\begin{equation}\label{LaplaceBeltrami-apriori}
\|u\|_{1,\G} \leq C \|f\|_{(\HH^1_\#(\G))'}.
\end{equation}
We now embark on the task of establishing the well-posedness of the $\LL^p$-based analogue \eqref{eq:div-form-W1p-weakform} of problem \eqref{eq:H1-AdivnablaG}.
Our arguments are based on the ``localization" approach of \cite[Lemma 3]{BonitoDemlowNochetto2020} and a duality argument, and build upon the well-posedness in $\HH^1_\#(\G)$ of \eqref{eq:gsep}. 

One difficulty with writing the weak formulation \eqref{eq:div-form-W1p-weakform} in the parametric domain is the fact that $f \in (\WW^{1,p^*}_\#(\G))'$ does not necessarily make sense pointwise, so it is not immediately clear how to write it in the parametric domain as is. This is where the divergence form of the elliptic PDE becomes crucial---instead of mapping $f$ to the parametric domain, we shall construct $\bu_f \in \bL^p_t$ such that $\divG \bu_f = f$ as objects in $(\WW^{1,p^*}_\#)'$, and lift $\bu_f \in \bL^p_t$ instead. 
We thus first need to show that $\divG : \bL^p_t \to (\WW^{1,p^*}_\#)'$, whose definition is inspired by integration-by-parts formula \eqref{eq:int-by-parts-tangential}, is surjective.
This is the content of the following result.

\begin{proposition}[surjectivity of the divergence operator]\label{prop:surj-div}
Let $p \in (1,\infty)$.
If $\G$ is of class $C^{0,1}$, then the linear and continuous operator $\divG : \bL^p_t(\G) \rightarrow (\WW^{1,p^*}_\#(\G))'$ defined by
\begin{equation*}
\langle \divG \bu, v \rangle := -\int_\G \bu \cdot \nablaG v, \qquad \forall \bu \in \bL^p_t(\G), \ \forall v \in \WW^{1,p^*}_\#(\G),
\end{equation*}
is surjective.
\begin{proof}
The duality identification $(\bL^p_t(\G))' \equiv \bL^{p^*}_t(\G)$ and Poincaré inequality in $\WW^{1,p^*}_\#(\G)$ yield
\begin{equation*}
\sup_{\substack{\bu \in \bL^p_t(\G)\\ \bu \neq \mathbf{0}}} \frac{\langle \divG \bu, v \rangle}{\|\bu\|_{0,p;\G}}
= \sup_{\substack{\bu \in \bL^p_t(\G)\\ \bu \neq \mathbf{0}}} \frac{\int_\G \bu \cdot \nablaG v}{\|\bu\|_{0,p;\G}}
\cong \|\nablaG v\|_{0,p^*;\G}
\cong \|v\|_{1,p^*;\G},
\qquad \forall v \in \WW^{1,p^*}_\#(\G).
\end{equation*}
Hence, the desired assertion follows from item i) of \autoref{thm:inj-surj-rel} (characterization of injectivity and surjectivity).
\end{proof}
  
\end{proposition}

We are now in position to prove \autoref{thm:div-form-wp} (invertibility of the operator $-\divG(\bsfA \nablaG \cdot)$ in $\WW^{1,p}_\#(\G)$).

\begin{proof-thm}{{thm:div-form-wp}} 
We proceed in three steps, depending on the value of $p$ and the regularity of $\G$.

\medskip\noindent
\textbf{Step 1.}
We first argue for $p > 2$ and $\G$ of class $C^1$.
Since $f \in (\WW^{1,p^*}_\#(\G))' \subseteq (\HH^1_\#(\G))'$, the well-posedness of \eqref{eq:H1-AdivnablaG} yields the existence of a unique $u \in \HH^1_\#(\G)$ such that
\begin{equation}\label{H1-LB-rhs-better}
\int_\G \bsfA \nablaG u \cdot \nablaG v = \langle f, v \rangle, \qquad \forall v \in \HH^1_\#(\G).
\end{equation}
We aim to prove that $u$ is more regular, namely in $\WW^{1,p}_\#(\G)$, from which \eqref{eq:div-form-W1p-weakform} then easily follows from \eqref{H1-LB-rhs-better} and the density of $\HH^1_\#(\G)$ in $\WW^{1,p^*}_\#(\G)$ for the test space (cf.~\autoref{thm:density-C^m}).

We first notice that by \autoref{prop:surj-div} (surjectivity of the divergence operator) and \autoref{rem:inv-surj} we know there exists $\bu_f \in \bL^p_t(\G)$ such that
\begin{equation}\label{f-form-uf}
\langle f, v \rangle = - \int_\G \bu_f \cdot \nablaG v
\end{equation}
for each $v \in \WW^{1,p^*}_\#(\G)$, and
\begin{equation}\label{inv-div-bound}
\|\bu_f\|_{0,p;\G} \lesssim \|f\|_{(\WW^{1,p^*}_\#(\G))'}.  
\end{equation}
Moreover, identity \eqref{f-form-uf} lets us naturally extend $f$ from $\WW^{1,p^*}_\#(\G)$ to $\WW^{1,p^*}(\G)$.

We proceed as in the proof of \cite[Lemma 3]{BonitoDemlowNochetto2020} via a ``localization'' approach.
Let $\{\theta_i\}_{i=1}^N$ be a $C^1$ partition of unity associated with the covering $\{\cW_i\}_{i=1}^N$ of $\G$ \cite[Ch.~4, 11.~Theorem]{Shahshahani2016}.
We first argue with one chart, and hence we drop the index $i$.
It is easy to see that $\mho := \theta u \in \HH^1(\G)$ is compactly supported on $\cW \cap \G$ and satisfies
\begin{equation}\label{localized:H1-LB-rhs-better}
\int_\G \bsfA \nablaG \mho \cdot \nablaG v = \int_\G g v + \int_\G \bh \cdot \nablaG v, \qquad \forall v \in \HH^1(\G),
\end{equation}
where $g := - \bsfA\nablaG \theta \cdot \nablaG u - \nablaG \theta \cdot \bu_f$ and $\bh := u \bsfA \nablaG \theta - \theta \bu_f$ are also compactly supported on $\cW \cap \G$.
Recalling formulas \eqref{surface-gradient-param} and \eqref{Laplace--Beltrami-param} and \autoref{rem:tangential-expansion}, we express \eqref{localized:H1-LB-rhs-better} as
\begin{equation}\label{localized:H1-LB-rhs-better_parametric}
\int_\cV \wt a_{\bchi} \wt\bM \nabla \wt\mho \cdot \nabla \wt v
= \int_\cV \wt a_{\bchi} \wt g \, \wt v + \int_\cV \wt a_{\bchi} \hat\bh \cdot \nabla \wt v, \qquad \forall \wt v \in \HH^1(\cV),
\end{equation}
where $\wt\bM := \wt\bg^{-1} \nabla^T \wt\bchi \wt\bsfA \nabla \wt\bchi \, \wt\bg^{-1}$ and $\hat\bh$ denotes the ``tangential components'' of $\bh$ in the sense of \autoref{rem:tangential-expansion}.
We now proceed via a bootstrapping argument.
Note that $\wt\bg$ and $\wt a_{\bchi}$ are of class $C^0$ because $\G$ is of class $C^1$.
Then, if $u \in \WW^{1,\frac{2d}{d-2n}}(\G)$ for some integer $0 \leq n < d/2$, Sobolev embeddings (cf.~\cite[pp.~284-285]{Evans2010})
\begin{equation*}
\begin{cases}
\WW^{1,\frac{2d}{d-2n}}(\cV) \hookrightarrow \LL^q(\cV), & \qquad \text{for all $q \in (1,\infty)$, if $2(n+1) \geq d$},\\
\WW^{1,\frac{2d}{d-2n}}(\cV) \hookrightarrow \LL^{\frac{2d}{d-2(n+1)}}(\cV), & \qquad \text{if $2(n+1) < d$},
\end{cases}
\end{equation*}
imply that $\hat\bh \in \bL^{q_{n+1}}(\cV)$, where
\begin{equation*}
q_{n+1} := \begin{cases}
  p, & \text{if $2(n+1) \geq d$},\\
  \min\{p,\frac{2d}{d-2(n+1)}\}, & \text{if $2(n+1) < d$},
\end{cases}
\end{equation*}
with the estimate
\begin{equation}\label{bootstrapping-estimate-1}
\|\hat\bh\|_{0,q_{n+1};\cV}
\stackrel{\eqref{eq:norm-equivalence}}{\lesssim} \|\wt u\|_{1,\frac{2d}{d-2n};\cV} + \|\hat\bu_f\|_{0,p;\cV}.
\end{equation}
Moreover, recalling that $g = - \bsfA \nablaG \theta \cdot \nablaG u - \nablaG \theta \cdot \bu_f$, we also have $\wt g \in \LL^{q_n}(\cV)$ where $q_n = \min\{p,\frac{2d}{d-2n}\}$ (because $0 \leq n < d/2$) with a similar estimate to \eqref{bootstrapping-estimate-1}.
Consequently, from \autoref{prop:gen-el-sys} we deduce that $\wt\mho \in \WW^{1,s}(\cV)$ and
\begin{equation}\label{higherintegrability-a-priori-param-1}
\|\wt\mho\|_{1,s;\cV} \lesssim \|\hat\bh\|_{0,q_{n+1};\cV} + \|\wt g\|_{0,q_n;\cV}
\stackrel{\eqref{eq:norm-equivalence},\eqref{bootstrapping-estimate-1}}{\lesssim} \|\wt u\|_{1,\frac{2d}{d-2n};\cV} + \|\hat\bu_f\|_{0,p;\cV},
\end{equation}
where
\begin{align*}
s
& := 
\begin{cases}
\min\{q_{n+1}, \frac{d q_n}{d-q_n}\}, & \text{if $q_n < d$},\\
q_{n+1}, & \text{if $q_n \geq d$}.
\end{cases}
= q_{n+1}.
\end{align*}
In order to see this last equality, notice that if $\frac{2d}{d-2n}<d$ and $p<d$ (the case $p \geq d$ is easier), then because of the monotonicity of the map $x \mapsto \frac{dx}{d-x}$ for $x \in [1,d)$, it follows that
\begin{equation*}
\frac{dq_n}{d-q_n}
= \min\left\{\frac{dp}{d-p}, \frac{d \frac{2d}{d-2n}}{d-\frac{2d}{d-2n}}\right\}
= \min\left\{\frac{dp}{d-p}, \frac{2d}{d-2(n+1)}\right\}
\geq \min\left\{p,\frac{2d}{d-2(n+1)}\right\}
= q_{n+1}.
\end{equation*}
Combining estimate \eqref{higherintegrability-a-priori-param-1} for each of the charts we deduce that $u \in \WW^{1,q_{n+1}}(\G)$ and
\begin{multline*}
\|u\|_{1,q_{n+1};\G}
\leq \sum_{i=1}^N \|\mho_i\|_{1,q_{n+1};\G}
\stackrel{\eqref{eq:norm-equivalence}}{\lesssim} \sum_{i=1}^N \|\wt\mho_i\|_{1,q_{n+1};\cV_i}\\
\stackrel{\eqref{higherintegrability-a-priori-param-1}}{\lesssim} \sum_{i=1}^N \left(\|\wt u_i\|_{1,\frac{2d}{d-2n};\cV_i} + \|\hat\bu_f\|_{0,p;\cV_i}\right)
\stackrel{\eqref{equiv-norms-manifold-param-tangentpart},\eqref{scalar-Sobolev-norm}}{\cong} \|u\|_{1,\frac{2d}{d-2n};\G} + \|\bu_f\|_{0,p;\G}
\stackrel{\eqref{inv-div-bound}}{\lesssim} \|u\|_{1,\frac{2d}{d-2n};\G} + \|f\|_{(\WW^{1,p^*}_\#(\G))'}.
\end{multline*}
Recall that $u \in \WW^{1,2}(\G) = \HH^1(\G)$ satisfies \eqref{eq:div-form-W1p-weakform}.
Hence, by iterating over $n$ starting from $n=0$, and using that $\frac{2d}{d-x} \to \infty$ as $x\to d$, we eventually find (the first) $N$ such that $q_{N+1} = p$ and so
\begin{multline*}
\|u\|_{1,q_{N+1};\G}
\lesssim \|u\|_{1,\frac{2d}{d-2N};\G} + \|f\|_{(\WW^{1,p^*}_\#(\G))'}
= \|u\|_{1,q_N;\G} + \|f\|_{(\WW^{1,p^*}_\#(\G))'}\\
\lesssim \ldots
\lesssim \|u\|_{1,\G} + \|f\|_{(\WW^{1,p^*}_\#(\G))'}
\stackrel{\eqref{LaplaceBeltrami-apriori}}{\lesssim} \|f\|_{(\WW^{1,p^*}_\#(\G))'}.
\end{multline*}
This finishes the proof for $p>2$ and $\G$ of class $C^1$.

\medskip\noindent
\textbf{Step 2.}
If $p>2$, $\G$ is only of class $C^{0,1}$ and if $\bsfA$ lies only in $\bbL^\infty(\G)$, we can proceed as before with a $C^{0,1}$ partition of unity to arrive at the uniformly elliptic equation \eqref{localized:H1-LB-rhs-better_parametric} with coefficient matrix $\wt a_{\bchi} \wt\bM \in \bbL^\infty(\cV)$.
Then, because of \autoref{app:rem1} (Meyers), the bootstrapping argument still holds as long as $p$ lies in $[2,2+\eta)$ with $\eta$ depending only on $\G$ and $\bsfA$ (via the coefficients $\wt a_{\bchi} \wt\bM \in \bbL^\infty(\cV)$).

\medskip\noindent
\textbf{Step 3.}
For $p \in (1,2)$ we argue by duality;
roughly speaking, we will apply the case $p>2$ to the problem resulting from replacing $\bsfA$ by $\bsfA^T$.
Indeed, we just showed that $T : \WW^{1,p^*}_\#(\G) \rightarrow (\WW^{1,p}_\#(\G))'$ defined by $\langle T v, u \rangle = \int_\G \bsfA^T \nablaG v \cdot \nablaG u$ is a linear continuous bijection (because $p^* \in (2,\infty)$ and $\bsfA^T$ satisfies \eqref{A-ellipticity-Gamma}) if $\G$ is of class $C^1$ and $\bsfA \in \bbC(\G)$ or if $\G$ is of class $C^{0,1}$, $\bsfA \in \bbL^\infty(\G)$ and $p^*>2$ is sufficiently close to $2$.
Under these assumptions, we deduce from \autoref{thm:inj-surj-rel} (characterization of injectivity and surjectivity) that the adjoint $T^\mathrm{t} : \WW^{1,p}_\#(\G) \rightarrow (\WW^{1,p^*}_\#(\G))'$ defined by $\langle T^\mathrm{t} u, v \rangle := \langle T v, u \rangle = \int_\G \bsfA^T \nablaG v \cdot \nablaG u = \int_\G \bsfA \nablaG u \cdot \nablaG v$ is also a continuous bijection.
This finishes the proof for $p \in (1,2)$.
\end{proof-thm}

A straightforward combination of \autoref{thm:div-form-wp} with Sobolev embeddings (cf.~\autoref{sec:Sobolev-spaces-manifolds}) yields the following result.
In particular, we will utilize it in our companion paper \cite{BenavidesNochettoShakipov2025-b}.
\begin{corollary}\label{cor:LB-W1p-special-RHS}
Let $p \in (1,\infty)$ and $q \in (1,d)$, and define $s := \min\{p,\frac{dq}{d-q}\}$.
If $\G$ is of class $C^1$ and $\bsfA \in \bbC(\G)$ satisfy the ellipticity condition \eqref{A-ellipticity-Gamma},
then for each $\bf \in \bL^p_t(\G)$ and $f \in \LL^q_\#(\G)$, there is a unique $u \in \WW^{1,s}_\#(\G)$ such that
\begin{equation}\label{eq:LaplaceBeltrami-W1p-special-RHS}
\int_\G \bsfA \nablaG u \cdot \nablaG v = - \int_\G \bf \cdot \nablaG v + \int_\G f \, v, \qquad \forall v \in \WW^{1,s^*}_\#(\G).
\end{equation}
Moreover, there exists a positive constant $C$, depending only on $\G$, $\bsfA$, $p$ and $q$ such that
\begin{equation*}
\|u\|_{1,s;\G} \leq C \left( \|\bf\|_{0,p;\G} + \|f\|_{0,q;\G} \right).
\end{equation*}
If $\G$ and $\bsfA$ are, respectively, only of class $C^{0,1}$ and $\bbL^\infty(\G)$, there exists $\varepsilon>0$, depending only on $\G$ and $\bsfA$, such that the result still holds as long as $s \in (2-\varepsilon,2+\varepsilon)$.
\end{corollary}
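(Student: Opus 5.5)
The plan is to reduce \autoref{cor:LB-W1p-special-RHS} to \autoref{thm:div-form-wp} with exponent $s$. To do so, we show that the right-hand side of \eqref{eq:LaplaceBeltrami-W1p-special-RHS} defines an element $F \in (\WW^{1,s^*}_\#(\G))'$ given by
\begin{equation*}
\langle F, v\rangle := -\int_\G \bf\cdot\nablaG v + \int_\G f\,v ,
\end{equation*}
with $\|F\|_{(\WW^{1,s^*}_\#(\G))'} \lesssim \|\bf\|_{0,p;\G} + \|f\|_{0,q;\G}$. Existence, uniqueness and the a priori bound then follow directly from \eqref{eq:div-form-W1p-weakform}--\eqref{eq:div-form-W1p-apriori} with $p$ replaced by $s$. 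For the Lipschitz case the same argument applies, using the last assertion of \autoref{thm:div-form-wp} under the restriction $s\in(2-\varepsilon,2+\varepsilon)$.

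For the first term, since $s\le p$ we have $s^*\ge p^*$. By Hölder's inequality on the finite-measure set $\G$,
\begin{equation*}
\Big|\int_\G \bf\cdot\nablaG v\Big| \le \|\bf\|_{0,s;\G}\,\|\nablaG v\|_{0,s^*;\G} \lesssim \|\bf\|_{0,p;\G}\,\|v\|_{1,s^*;\G}.
\end{equation*}

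For the second term we need $\WW^{1,s^*}(\G)\hookrightarrow \LL^{q^*}(\G)$, which by duality is the same as $\LL^q(\G)$ being continuously contained in $(\WW^{1,s^*}(\G))'$. Since $s\le \frac{dq}{d-q}$, its conjugate satisfies
\begin{equation*}
\frac1{s^*} = 1-\frac1s \le 1-\frac1q+\frac1d = \frac1{q^*}+\frac1d .
\end{equation*}
If $s^*<d$, this gives $q^*\le \frac{ds^*}{d-s^*}$, and \autoref{prop:GNG-embeddings} with $k=1$, $m=0$ yields the embedding. If $s^*\ge d$, then $\WW^{1,s^*}(\G)$ embeds into every $\LL^r(\G)$ with $r<\infty$, by \autoref{prop:Rellich-Kondrachov} applied to a slightly smaller exponent below $d$ or by \autoref{prop:Holder-compactly-embedded}; in particular it embeds into $\LL^{q^*}(\G)$. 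In either case
\begin{equation*}
\Big|\int_\G f v\Big| \le \|f\|_{0,q;\G}\,\|v\|_{0,q^*;\G} \lesssim \|f\|_{0,q;\G}\,\|v\|_{1,s^*;\G}.
\end{equation*}

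The main point requiring care is this exponent bookkeeping in the embedding step, in particular the borderline case $s^*= d$, which is handled by passing to a slightly smaller exponent. The embeddings are stated for $\G$ of class $C^{0,1}$, so they also cover the Lipschitz case. The constants depend only on $\G$, $\bsfA$, $p$ and $q$, as claimed.
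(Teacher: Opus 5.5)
Your proposal is correct and is essentially the paper's argument, which it states in one line as a ``straightforward combination'' of \autoref{thm:div-form-wp} with Sobolev embeddings. You bound the right-hand side in $(\WW^{1,s^*}_\#(\G))'$ via H\"older's inequality and $\WW^{1,s^*}(\G)\hookrightarrow\LL^{q^*}(\G)$ (your exponent bookkeeping, including the borderline case $s^*=d$, is sound), and then apply that theorem, or its Lipschitz variant, with exponent $s$.
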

We next establish the higher $\LL^p$-based regularity of solutions to problem \eqref{eq:div-form-W1p-weakform}, namely \autoref{thm:div-form-higreg} ($\WW^{m+2,p}$-regularity for the operator $-\divG(\bsfA \nablaG \cdot)$ on $\G$).
Similarly to \autoref{thm:div-form-wp}, the proof also follows by ``localization".

\begin{proof-thm}{{thm:div-form-higreg}}
We first argue with one chart, and hence we drop the index $i$.
Similarly to the proof of \autoref{thm:div-form-wp}, it is easy to see that the ``localized'' variable $\mho := \theta u \in \WW^{1,p}_\#(\G)$ is compactly supported on $\cW \cap \G$ and satisfies
\begin{equation}\label{localized-LaplaceBeltrami}
\int_\G \bsfA\nablaG \mho \cdot \nablaG v = \int_\G g v, \qquad \forall v \in \WW^{1,p^*}(\G),
\end{equation}
where
$$
g := \theta f - \bsfA\nablaG\theta \cdot \nablaG u - \divG (u \bsfA \nablaG \theta) = \theta f - (\bsfA + \bsfA^T)\nablaG u \cdot \nablaG \theta - u \divG (\bsfA \nablaG \theta)
$$
is also compactly supported on $\cW_i \cap \G$.
Fix $k=0,\dots,m$ and assume by induction that $u \in \WW^{k+1,p}(\G)$ and that
\begin{equation}\label{ind_hyp-lp-hireg}
\|u\|_{k+1,p;\G} \leq C \|f\|_{\max\{0,k-1\},p;\G};
\end{equation}
this is valid for $k=0$ in view of \eqref{eq:div-form-W1p-apriori} and $\|f\|_{(\WW^{1,p^*}_\#(\G))'} \leq C \|f\|_{0,p;\G}$.
Then, $g \in \WW^{k,p}(\G)$ and $\|g\|_{k, p;\cW \cap \G} \lesssim \|f\|_{k, p;\cW \cap \G} + \|u\|_{k+1,p;\cW \cap \G}$.
Recalling \eqref{change-of-variable} and \eqref{surface-gradient-param}, we express \eqref{localized-LaplaceBeltrami} in parametric domain as
\begin{equation}\label{loc_lb_hi_reg}
\int_{\cV} \wt a_{\bchi} \wt\bM \nabla \wt \mho \cdot \nabla \wt v = \int_{\cV} \wt a_{\bchi} \wt g \, \wt v, \qquad \forall \wt v \in \WW^{1,p^*}(\cV),
\end{equation}
where $\wt\bM := \wt\bg^{-1} \nabla^T \wt\bchi \wt\bsfA \nabla \wt\bchi \, \wt\bg^{-1}$.
Since $\wt a_{\bchi} \wt\bM$ is uniformly elliptic and of class $C^{m,1}$ (because of \eqref{A-ellipticity-Gamma} and the facts that $\bsfA \in \bbC^{m,1}(\G)$ and $\G$ is of class $C^{m+1,1}$) and $\wt g \in \WW^{k,p}(\cV)$ (because $g \in \WW^{k,p}(\G)$), we deduce from \autoref{app:prop-hi-reg} with $s=p$ that $\wt\mho \in \WW^{k+2,p}(\cV)$ and
\begin{equation*}
\|\wt\mho\|_{k+2,p;\cV}
\lesssim \|\wt g\|_{k,p;\cV}
\stackrel{\eqref{eq:norm-equivalence}}{\lesssim} \|g\|_{k,p;\cU \cap \G}
\lesssim \|f\|_{k,p;\cU \cap \G} + \|u\|_{k+1,p;\cU \cap \G}.
\end{equation*}
Combining this last estimate for each of the charts of $\G$ we deduce that $u \in \WW^{k+2,p}(\G)$ and
\begin{multline*}
\|u\|_{k+2,p;\G}
\leq \sum_{i=1}^N \|\mho_i\|_{k+2,p;\G}
\stackrel{\eqref{eq:norm-equivalence}}{\lesssim} \sum_{i=1}^N \|\wt\mho_i\|_{k+2,p;\cV_i}\\
\lesssim \sum_{i=1}^N (\|f\|_{k,p;\cU_i \cap \G} + \|u\|_{k+1,p; \cU_i \cap \G})
\lesssim \|f\|_{k,p;\G} + \|u\|_{k+1,p;\G}
\stackrel{\eqref{ind_hyp-lp-hireg}}{\lesssim} \|f\|_{k,p;\G}.
\end{multline*}
This concludes the induction step, and hence, finishes the proof of estimate \eqref{eq:div-form-higher-apriori}.

Finally, integration-by-parts formula \eqref{eq:int-by-parts} applied to the left-hand side of \eqref{eq:div-form-W1p-weakform} and the density of $\WW^{1,p^*}(\G)$ in $\LL^{p^*}(\G)$ yields \eqref{div-form-a.e.}.
\end{proof-thm}

It is sometimes useful to consider a weaker notion of solution than $u \in \WW^{1,p}_\#(\G)$ for a right-hand side datum $f$ that is not in $(\WW^{1,p^*}_\#(\G))'$, but rather in $(\WW^{2,p^*}_\#(\G))'$.
We call these solutions ``ultra-weak''.
Examples of such applications include, e.g., optimal control problems where the datum is a measure, and the analysis of tangential vector-valued problems on manifolds \cite{BenavidesNochettoShakipov2025-b}. 
\begin{lemma}[ultra-weak solutions of the $-\divG(\bsfA \nablaG \cdot)$ operator on $\G$]\label{lem:ultraweak-allp-LB}
Let $p \in (1,\infty)$,
$\G$ be of class $C^{1,1}$ and $\bsfA \in \bbC^{0,1}(\G)$ satisfy \eqref{A-ellipticity-Gamma}.
Then, for each $f \in (\WW^{2,p^*}_\#(\G))'$ there exists a unique $u \in \LL^p_\#(\G)$ such that
\begin{equation}\label{eq:dual-problem}
-\int_\G u \, \divG(\bsfA^T \nablaG v) = \langle f, v \rangle, \qquad \forall v \in \WW^{2,p^*}_\#(\G).
\end{equation}
Moreover, there exists a constant $C>0$ depending only on $\G$, $\bsfA$ and $p$ such that
\begin{equation}\label{eq:dual-problem-apriori}
\|u\|_{0,p;\G} \leq C \|f\|_{(\WW^{2,p^*}_\#(\G))'}.
\end{equation}
\begin{proof}
Since the operator $T := -\divG(\bsfA\nablaG \cdot)$ is a continuous bijection from $\WW^{2,p^*}_\#(\G)$ onto $\LL^{p^*}_\#(\G) \equiv (\LL^p_\#(\G))'$, according to~\autoref{thm:div-form-higreg},
it follows from \autoref{app:var-prob} that the adjoint $T^\mathrm{t} : \LL^p_\#(\G) \rightarrow (\WW^{2,p^*}_\#(\G))'$ defined by $\langle T^\mathrm{t} u , v \rangle := \langle Tv, u \rangle = - \int_\G u \, \divG(\bsfA^T \nablaG v) $ is also a continuous bijection.
This finishes the proof.
\end{proof}
\end{lemma}

\subsection{\texorpdfstring{$\LL^p$}{Lᵖ}-based regularity for general scalar elliptic problems} \label{subsec:lp-reg-GSES}
%
This section is devoted to establishing \autoref{thm:gsep-wp} (well-posedness) and \autoref{thm:gsep-reg} (higher regularity).
We begin by establishing the uniqueness of the variational formulation associated to problem \eqref{eq:gsep-weakform} for $p \in [2,\infty)$ provided \eqref{cond-coeffs-with-b} holds true.
Our arguments are inspired by the proof of the Maximum Principle for weak solutions of elliptic equations in flat domains \cite[Section 8.1]{GilbargTrudinger2001}.

\begin{proposition}[uniqueness for $p \geq 2$]\label{prop:uniqueness-GSES}
Let $\G$ be of class $C^{0,1}$, let $p \in [2,\infty)$, $(\bsfA, \bsfb, \bsfc, \sfd) \in \bbL^\infty(\G) \times \bL^\infty_t(\G) \times \bL^\infty_t(\G) \times \LL^\infty(\G)$ and let conditions \eqref{A-ellipticity-Gamma} and \eqref{cond-coeffs-with-b} hold.
If $u \in \WW^{1,p}(\G)$ satisfies
\begin{equation}\label{u-kernel-GSES}
\int_\G \left(\bsfA \nablaG u \cdot \nablaG v + u (\bsfb \cdot \nablaG v)  + (\bsfc \cdot \nablaG u) v + \sfd \, u v\right)  = 0, \qquad \forall v \in \WW^{1,p^*}(\G),
\end{equation}
then $u = 0$ a.e.~on $\G$.
\begin{proof}
It is enough to consider the case $p=2$.
Let $u \in \HH^1(\G)$ satisfy \eqref{u-kernel-GSES}.

Notice that for each $v \in \HH^1(\G)$ we have $w := uv \in \WW^{1,1}(\G)$ and $\nablaG w = v \nablaG u + u \nablaG v$ a.e.~on $\G$ (cf.~\autoref{def:diff-ops-SSS} and \cite[pp.~150-151]{GilbargTrudinger2001}).
Hence, for each $v \in \HH^1(\G)$ such that $w \geq 0$ a.e.~on $\G$, condition \eqref{cond-coeffs-with-b} implies
\begin{equation}\label{proto:ineq-uv-v-geq0}
\begin{aligned}
0
&= \int_\G \left(\bsfA \nablaG u \cdot \nablaG v + u (\bsfb \cdot \nablaG v)  + (\bsfc \cdot \nablaG u) v + \sfd \, u v\right)\\
&= \int_\G \left(\bsfA \nablaG u \cdot \nablaG v + u (\bsfb \cdot \nablaG v)  + (\bsfc \cdot \nablaG u) v - \bsfb \cdot \nablaG(uv)\right) + \int_\G (\sfd \, u v + \bsfb \cdot \nablaG(uv))\\
&\geq \int_\G \left(\bsfA \nablaG u \cdot \nablaG v  + v(\bsfc - \bsfb)\cdot \nablaG u \right) + \lambda \int_M uv,
\end{aligned}
\end{equation}
whence
\begin{equation}\label{ineq-uv-v-geq0}
\int_\G \bsfA \nablaG u \cdot \nablaG v + \lambda \int_M uv \leq C \int_\G |\nablaG u| |v|.
\end{equation}
Denote $M := \operatorname{esssup}_{x \in \G} u(x) \in (-\infty,\infty]$ and suppose by contradiction that $M > 0$.
Then, for each $0 < k < M$, define $v_k := (u-k)^+ := \max\{u-k,0\}$.
Recalling \autoref{def:Sobolev-spaces-manifold} and standard results for Sobolev spaces in flat domains \cite[Lemma 7.6]{GilbargTrudinger2001}, we have that $v_k \in \HH^1(\G)$ and
\begin{equation*}
\nablaG v_k := \begin{cases}
\nablaG u, & \text{if $u>k$},\\
0, & \text{if $u \leq k$}.
\end{cases}
\end{equation*}
Consequently, from \eqref{ineq-uv-v-geq0} and noticing that $u-k = v_k \leq u$ if $u \geq k$, we have
\begin{equation}\label{ineq-vk}
\int_\G \bsfA \nablaG v_k \cdot \nablaG v_k + \lambda \int_M v_k^2
\leq \int_\G \bsfA \nablaG u \cdot \nablaG v_k + \lambda \int_M u\,v_k
\stackrel{\eqref{ineq-uv-v-geq0}}{\leq} C \int_{\gamma_k} |\nablaG v_k| |v_k|,
\end{equation}
where $\gamma_k := \{x \in \G: (\nablaG v_k)(x) \neq 0\} \subseteq \{x \in \G: v_k(x) > 0\}$.
Then, it is easy to prove (using the compactness of $\HH^1(\G)$ into $\LL^2(\G)$, similarly to \cite[Lemma 2]{BonitoDemlowNochetto2020}) that $v \mapsto \left(\int_\G \bsfA \nablaG v \cdot \nablaG v + \lambda \int_M v^2\right)^{\frac{1}{2}}$ is an equivalent norm in $\HH^1(\G)$.
Therefore, we further obtain from \eqref{ineq-vk} that
\begin{equation*}
\|v_k\|_{1,\G}^2
\lesssim \int_{\gamma_k} |\nablaG v_k| |v_k|
\leq \|v_k\|_{1,\G} \|v_k\|_{0,\gamma_k}.
\end{equation*}
In turn, for any fixed $q \in (2,\frac{2d}{d-2}]$ if $d \geq 3$ or $q \in (2,\infty)$ if $d=2$, the continuous injection $\HH^1(\G) \hookrightarrow \LL^q(\G)$ (cf.~\autoref{sec:Sobolev-spaces-manifolds}) and H\"older's inequality further yield
\begin{equation*}
\|v_k\|_{0,q;\G} \lesssim \|v_k\|_{1,\G} \lesssim \|v_k\|_{0,\gamma_k} \leq |\gamma_k|^{\frac{q-2}{2q}} \|v_k\|_{0,q;\G},
\end{equation*}
and so, since $\|v_k\|_{0,q;\G} > 0$,
\begin{equation*}
|\gamma_k| \geq C, \qquad \forall k < M,
\end{equation*}
for a constant $C > 0$ independent of $k$.

We claim that $M < \infty$.
Otherwise, by Chebyshev's inequality
\begin{equation*}
C
\leq \liminf_{n \to \infty} |\gamma_n|
\leq \liminf_{n \to \infty} |\{v_n > 0\}|
\leq \liminf_{n \to \infty} |\{u > n\}|
\leq \liminf_{n \to \infty} \frac{1}{n} \|u\|_{0,1;\G} = 0,
\end{equation*}
which is a contradiction.
Since $|\{u = M: \nablaG u \neq 0\}| = 0$, according to \cite[Lemma 7.7]{GilbargTrudinger2001} and its relation to \autoref{def:Sobolev-spaces-manifold},
we obtain
\begin{equation*}
|\gamma_M|
= |\{\nablaG v_M \neq 0\}|
= |\{u > M: \nablaG u \neq 0\}|
= |\{u \geq M: \nablaG u \neq 0\}|\\
= \left|\bigcap_{n \in \NN} \gamma_{M - \frac{1}{n}}\right| 
= \lim_{n \to \infty} \big|\gamma_{M - \frac{1}{n}} \big|
\geq C,
\end{equation*}
by continuity of the Lebesgue measure on $\G$.
However, $|\gamma_M| \leq |\{u > M \}| = 0$, because $u \leq M$ a.e.~on $\G$,
which is a contradiction; hence $M \leq 0$ and $u \leq 0$ a.e.~on $\G$.
Since \eqref{u-kernel-GSES} is linear in $u$, applying this estimate to $-u$, instead of $u$, yields $-u\le0$ a.e. on $\G$, whence $u = 0$ a.e.~on $\G$ as asserted.
\end{proof}
\end{proposition}

We now proceed to prove \autoref{thm:gsep-wp} (well-posedness of general scalar elliptic equations). Even though we follow the general argument used in flat domains, the lack of boundary of $\G$ plays against us.
As a result, conditions \eqref{cond-coeffs-with-b} and \eqref{cond-coeffs-with-c} on coefficients $\bsfb, \bsfc, \sfd$ appear to be inevitable.
Moreover, in order to retain both of these conditions for any choice of $p \in (1, \infty)$, we use a delicate argument shown below.

\begin{proof-thm}{{thm:gsep-wp}}
Roughly speaking, the argument goes as follows. For $p=2$, we establish the well-posedness of \eqref{eq:gsep-weakform} assuming \eqref{cond-coeffs-with-b} using \autoref{prop:uniqueness-GSES} and the Fredholm alternative. We then recognize that the same argument still applies to the adjoint problem, wherein $(\bsfA, \bsfb, \bsfc, \sfd)$ are replaced by $(\bsfA^T, \bsfc, \bsfb, \sfd)$---this yields a second sufficient condition, namely, \eqref{cond-coeffs-with-c}. At this point, for $p=2$, we have the well-posedness of \eqref{eq:gsep-weakform} when at least one of the conditions \eqref{cond-coeffs-with-b}, \eqref{cond-coeffs-with-c} holds. We obtain the well-posedness for $p \in (2,\infty)$ under the same conditions by bootstrapping integrability $p$ based upon \autoref{thm:div-form-wp} (well-posedness of the operator $-\divG(\bsfA \nablaG \cdot)$ in $\WW^{1,p}_\#(\G)$); the analogous result for $p<2$ follows by duality. The well-posedness for $\G$ of class $C^{0,1}$ and $p \in (2-\varepsilon, 2+\varepsilon)$ follows from Meyers' argument.
For readability purposes, we divide the proof into three cases.

\medskip\noindent\textbf{Case 1:}
$\G$ is of class $C^{0,1}$ and $p=2$.
We further split the proof according to whether condition \eqref{cond-coeffs-with-b} or \eqref{cond-coeffs-with-c} hold.

\smallskip
\noindent\textbf{Subcase 1.1: Condition \eqref{cond-coeffs-with-b}.}
Recalling the algebraic manipulations performed in \eqref{proto:ineq-uv-v-geq0} and \eqref{ineq-uv-v-geq0} within the proof of \autoref{prop:uniqueness-GSES}, we observe that upon defining the linear and bounded operators $L, K: \HH^1(\G) \rightarrow (\HH^1(\G))'$ by
\begin{align*}
\langle Lu,v \rangle & := \int_\G (\bsfA \nablaG u \cdot \nablaG v) + \int_\G (\sfd \, u v + \bsfb \cdot \nablaG(uv)), \\
\langle Ku,v \rangle & := \int_\G v(\bsfc - \bsfb)\cdot \nablaG u,
\end{align*}
we can equivalently express \eqref{eq:gsep-weakform} as:
find $u \in \HH^1(\G)$ such that
\begin{equation}\label{eq:bij-plus-comp}
(L+K)u = f.
\end{equation}
Taking $v=u$, arguing as in the proof of \autoref{prop:uniqueness-GSES} and using condition \eqref{cond-coeffs-with-b} gives the Poincaré-type inequality $\langle Lu,u \rangle \ge \Lambda \int_\G |\nablaG u|^2 + \lambda \int_M |u|^2$.
This in turn implies that $L$ is a continuous bijection from $\HH^1(\G) \rightarrow (\HH^1(\G))'$.

On the other hand, regarding the operator $K$, H\"older's inequality yields
\begin{equation*}
|\langle Ku,v \rangle| \leq C \|\nablaG u\|_{0,\G} \|v\|_{0,\G}, \qquad \forall u,v \in \HH^1(\G),
\end{equation*}
and so
\begin{equation*}
\|K^\mathrm{t}(v)\|_{(\HH^1(\G))'} \leq C \|v\|_{0,\G}, \qquad \forall v \in \HH^1(\G),
\end{equation*}
where $K^\mathrm{t}: \HH^1(\G) \rightarrow (\HH^1(\G))'$ is the ``adjoint'' of $K$ (see \autoref{app:var-prob}).
Consequently, since the embedding $\HH^1(\G) \hookrightarrow \LL^2(\G)$ is compact (cf.~\autoref{rem:W1p-compactly-embedded-in-L^p}) we deduce that $K^\mathrm{t}$ is compact and, by \cite[Theorem 6.4]{Brezis2011}, so is $K$.

Since $L$ is bijective and $K$ is compact, it follows from the Fredholm's alternative for reflexive Banach spaces \cite[Theorem 6.6]{Brezis2011} (see also \cite[Teorema 6.9]{Gatica2024}) that $L+ K$ has closed range and $\dim \operatorname{ker}(L+K) = \dim \operatorname{ker}((L+K)^\mathrm{t}) < \infty$.
In order to conclude that $L+K$ is invertible, it suffices to assert that $\ker(L+K) = \{\mathbf{0}\}$, which is a consequence of \autoref{prop:uniqueness-GSES}.
Finally, the Bounded Inverse Theorem implies \eqref{eq:gsep-apriori} and finishes the proof of Subcase 1.1.

\medskip\noindent\textbf{Subcase 1.2: Condition \eqref{cond-coeffs-with-c}.}
%
We argue by duality.
Let us define the linear operator $T: \HH^1(\G) \rightarrow (\HH^1(\G))'$ by
\begin{equation*}
\langle Tv, u \rangle :=
\int_\G \bsfA^T \nablaG v \cdot \nablaG u + v (\bsfc \cdot \nablaG u)  + (\bsfb \cdot \nablaG v) u + \sfd \, v u.
\end{equation*}
This is similar to the operator in \eqref{u-kernel-GSES} upon replacing $\bsfA$, $\bsfb$ and $\bsfc$ respectively by $\bsfA^T$, $\bsfc$ and $\bsfb$. Since \eqref{cond-coeffs-with-b} becomes \eqref{cond-coeffs-with-c} in the present context,
applying Subcase 1.1 implies that $T$ is an isomorphism.
Consequently, from \autoref{thm:inj-surj-rel} (characterization of injectivity and surjectivity) it transpires that $T^\mathrm{t}: \HH^1(\G) \rightarrow (\HH^1(\G))'$ defined by
\begin{align*}
\langle T^\mathrm{t} u, v \rangle
:= \langle Tv, u \rangle
& = \int_\G \bsfA^T \nablaG v \cdot \nablaG u + v (\bsfc \cdot \nablaG u)  + (\bsfb \cdot \nablaG v) u + \sfd \, v u\\
& = \int_\G \bsfA \nablaG u \cdot \nablaG v + u(\bsfb \cdot \nablaG v) + (\bsfc \cdot \nablaG u)v   + \sfd \, u v
\end{align*}
is also an isomorphism and \eqref{eq:gsep-apriori} is valid.
This finishes the proof of Subcase 1.2.

\medskip\noindent\textbf{Case 2:}
Let $p \in (2,\infty)$ and either $\G$ be of class $C^1$ or $\G$ be of class $C^{0,1}$ and $p$ sufficiently close to $2$.
If either \eqref{cond-coeffs-with-b} or \eqref{cond-coeffs-with-c} is satisfied, then the unique solvability of problem \eqref{eq:gsep-weakform} with a priori bound \eqref{eq:gsep-apriori} follows from Case 1 ($p=2$) and a bootstrapping argument based upon \autoref{thm:div-form-wp} (invertibility of the operator $-\divG(\bsfA \nablaG \cdot)$ in $\WW^{1,p}_\#(\G)$).

\medskip\noindent\textbf{Case 3:}
Let $p \in (1,2)$ and either $\G$ be of class $C^1$ or $\G$ be of class $C^{0,1}$ and $p$ sufficiently close to $2$. If either \eqref{cond-coeffs-with-b} or \eqref{cond-coeffs-with-c} holds, then reinterpreting the linear operator $T$ defined in Subcase 1.2 as $T: \WW^{1,p^*}(\G) \rightarrow (\WW^{1,p}(\G))'$ it follows from Case 2 (because $p^* > 2$) that $T$ is an isomorphism, whence so is $T^\mathrm{t}: \WW^{1,p}(\G) \rightarrow (\WW^{1,p^*}(\G))'$ and \eqref{eq:gsep-apriori} is valid.
This concludes the proof of Case 3, and hence the overall proof.
\end{proof-thm}

We now establish the higher $\LL^p$-based Sobolev regularity enjoyed by problem \eqref{eq:gsep-weakform} (cf.~\autoref{thm:gsep-reg}).

\begin{proof-thm}{{thm:gsep-reg}}
Using the integration-by-parts formula \eqref{eq:int-by-parts} (and \autoref{lem:product-by-smooth} (product rule on $\WW^{m,p}(\G)$), it is easy to see that the solution $u \in \WW^{1,p}(\G)$ of \eqref{eq:gsep-weakform} also satisfies the divergence-form equation
\begin{equation*}
\int_\G \bsfA \nablaG u \cdot \nablaG v = \int_\G h \, v, \qquad \forall v \in \WW^{1,p^*}(\G),
\end{equation*}
where $h := f + u \divG \bsfb + \bsfb \cdot \nablaG u - \bsfc \cdot \nablaG u - \sfd u$.
Consequently, a straightforward bootstrapping argument based upon \autoref{thm:div-form-higreg} ($\WW^{m+2,p}$-regularity for the operator $-\divG(\bsfA \nablaG \cdot)$ on $\G$) yields $u \in \WW^{m+2,p}(\G)$ and estimate \eqref{eq:gsep-higher-apriori}.
On the other hand, utilizing the integration-by-parts formula \eqref{eq:int-by-parts} applied to \eqref{eq:gsep-weakform} and the density of $\WW^{1,p^*}(\G)$ in $\LL^{p^*}(\G)$, we obtain the strong form \eqref{gsep-a.e.} of the PDE.
This concludes the proof.
\end{proof-thm}

\subsection{Extensions of \autoref{thm:gsep-wp} and \autoref{thm:gsep-reg}}\label{subsec:extensions}

In this section we discuss \autoref{cor:gsep-wp-b=0} and \autoref{thm:gset-wp-div-free-conv}, which explore further the solvability of general scalar elliptic equations.
\autoref{cor:gsep-wp-b=0} concerns the classical {\it convection-diffusion-reaction} equation
\begin{equation*}
-\divG(\bsfA \nablaG u) + \bsfc \cdot \nablaG u + \sfd u = f,
\end{equation*}
for which we can deduce, as a consequence of \autoref{thm:gsep-wp} and \autoref{thm:gsep-reg}, the $\LL^p$-based well-posedness and higher regularity assuming that $\sfd \geq 0$ is strictly positive on a subset of positive measure on $\G$ and $\bsfc$ is only in $\bL^\infty_t(\G)$. The proof reduces to verifying assumption \eqref{cond-coeffs-with-b}.

On the other hand, \autoref{thm:gset-wp-div-free-conv} is concerned with the solvability of the {\it convection-diffusion} equation
\begin{equation*}
-\divG(\bsfA \nablaG u) + \bsfc \cdot \nablaG u = f,
\end{equation*}
provided the convective term $\bsfc$ is $\divG$-free.
This PDE can be viewed as a special instance of the general equation \eqref{eq:gsep} for $(\bsfb,\sfd) = (\mathbf{0},0)$, for which \eqref{cond-coeffs-with-b} fails.
Moreover, \eqref{cond-coeffs-with-c} is ruled out by simply considering $w$ to be constant.
Therefore, \autoref{thm:gsep-wp} does not apply in this case.

Recall the notation: $\bC^{-1,1}_t(\G) := \bL^\infty_t(\G)$ and $\CC^{-1,1}(\G) := \LL^\infty(\G)$.

\begin{corollary}[convection-diffusion-reaction] \label{cor:gsep-wp-b=0}
Let $p \in (1,\infty)$ and let $\G$ be of class $C^1$. Consider coefficients $(\bsfA, \bsfc, \sfd) \in \bbC(\G) \times \bL^\infty_t(\G) \times \LL^\infty(\G)$, with $\bsfA$ satisfying the ellipticity condition \eqref{A-ellipticity-Gamma}, and $\sfd \geq 0$ a.e.~such that $\sfd > 0$ on a subset of positive measure on $\G$.
Then, for every $f \in (\WW^{1,p^*}(\G))'$, there exists a unique $u \in \WW^{1,p}(\G)$ such that
\begin{equation}\label{eq:gsep-weakform-without-b}
\int_\G \bsfA \nablaG u \cdot \nablaG v + (\bsfc \cdot \nablaG u) v + \sfd \, u v  = \langle f, v \rangle, \qquad \forall v \in \WW^{1,p^*}(\G).
\end{equation}
Moreover, there exists a constant $C>0$, depending only on $\G$, $\bsfA$, $\bsfc$, $\sfd$ and $p$, such that
\begin{equation}\label{eq:gsep-apriori-without-b}
\|u\|_{1,p;\G} \leq C \|f\|_{(\WW^{1,p^*}(\G))'}.
\end{equation}
This result is still valid for $\G$ of class $C^{0,1}$ and $\bsfA \in \bbL^\infty(\G)$ provided $p \in (2-\varepsilon, 2+\varepsilon)$ for $\varepsilon > 0$ sufficiently small depending only on $\G$ and $\bsfA$.

If in addition to the assumptions above, $\G$ is of class $C^{m+1,1}$ for some nonnegative integer $m$, and $(\bsfA, \bsfc, \sfd) \in \bbC^{m,1}(\G) \times \bC_t^{m-1,1}(\G) \times C^{m-1,1}(\G)$, then, for each $f \in \WW^{m,p}(\G)$, the solution $u \in \WW^{1,p}(\G)$ of \eqref{eq:gsep-weakform-without-b} is in fact in $\WW^{m+2, p}(\G)$
and satisfies
\begin{equation}\label{gsep-a.e.-without-b}
- \divG (\bsfA \nablaG u) + \bsfc \cdot \nablaG u + \sfd u = f, \qquad \text{a.e.~on $\G$}.
\end{equation}
Moreover, there is a constant $C >0$, depending only on $\G$, $\bsfA$, $\bsfc$, $\sfd$ and $p$, such that
\begin{equation}\label{eq:gsep-higher-apriori-without-b}
\|u\|_{m+2,p;\G} \leq C \|f\|_{m,p;\G}.
\end{equation}
\begin{proof}
Since $0 < |\{\sfd > 0\}| = \left| \bigcup_{n \in \NN} \{\sfd \geq \frac{1}{n}\} \right| = \lim_{n \to \infty} |\{\sfd \geq \frac{1}{n}\}|$, there exists $N \in \NN$ such that $M := \{x \in \G: \sfd(x) \geq \frac{1}{N}\}$ has positive measure.
Therefore, for each $w \in \WW^{1,1}(\G)$ such that $w \geq 0$ a.e.~on $\G$ it holds that
\begin{equation*}
\int_\G \sfd\,w \geq \int_M \sfd\,w \geq \frac{1}{N} \int_M w,
\end{equation*}
which shows that problem \eqref{eq:gsep-weakform-without-b} satisfies condition \eqref{cond-coeffs-with-b} with $\bb = \mathbf{0}$.
The proof then follows from straightforward applications of \autoref{thm:gsep-wp} and \autoref{thm:gsep-reg}.
\end{proof}
\end{corollary}

\begin{thm}[convection-diffusion] \label{thm:gset-wp-div-free-conv}
Let $p \in (1,\infty)$ and let $\G$ be of class $C^1$. Consider coefficients $(\bsfA, \bsfc) \in \bbC(\G) \times \bL^\infty_t(\G)$, with $\bsfA$ satisfying the ellipticity condition \eqref{A-ellipticity-Gamma} and $\bsfc$ being weakly $\divG$-free:
\begin{equation}\label{eq:c-div-free}
\int_\G \bsfc \cdot \nablaG w = 0, \qquad w \in \WW^{1,1}(\G).
\end{equation}
Then, for every $f \in (\WW^{1,p^*}_\#(\G))'$, there exists a unique $u \in \WW^{1,p}_\#(\G)$ such that 
\begin{equation}\label{eq:gsep-weakform-without-b-div-free-conv}
\int_\G \bsfA \nablaG u \cdot \nablaG v + (\bsfc \cdot \nablaG u) v = \langle f, v \rangle, \qquad \forall v \in \WW^{1,p^*}_\#(\G).
\end{equation}
Moreover, there exists a constant $C>0$, depending only on $\G$, $\bsfA$, $\bsfc$ and $p$, such that
\begin{equation}\label{eq:gsep-apriori-without-b-div-free-conv}
\|u\|_{1,p;\G} \leq C \|f\|_{(\WW^{1,p^*}_\#(\G))'}.
\end{equation}
This result is still valid for $\G$ of class $C^{0,1}$ and $\bsfA \in \bbL^\infty(\G)$ provided $p \in (2-\varepsilon, 2+\varepsilon)$ for $\varepsilon > 0$ sufficiently small depending only on $\G$ and $\bsfA$.

If in addition to the assumptions above, $\G$ is of class $C^{m+1,1}$ for some nonnegative integer $m$, and $(\bsfA, \bsfc) \in \bbC^{m,1}(\G) \times \bC_t^{m-1,1}(\G)$, then, for each $f \in \WW^{m,p}(\G)$, the solution $u \in \WW^{1,p}(\G)$ of \eqref{eq:gsep-weakform-without-b-div-free-conv} is in fact in $\WW^{m+2, p}(\G)$ and satisfies
\begin{equation}\label{gsep-a.e.-without-b-div-free-conv}
- \divG (\bsfA \nablaG u) + \bsfc \cdot \nablaG u = f, \qquad \text{a.e.~on $\G$}.
\end{equation}
Moreover, there is a constant $C >0$, depending only on $\G$, $\bsfA$, $\bsfc$ and $p$, such that
\begin{equation}\label{eq:gsep-higher-apriori-without-b-div-free-conv}
\|u\|_{m+2,p;\G} \leq C \|f\|_{m,p;\G}.
\end{equation}
\begin{proof}
\noindent\textbf{Well-posedness:}
We first tackle the case $p=2$.
Notice that (as in the proof of \autoref{prop:uniqueness-GSES}) for each $v \in \HH^1(\G)$ we have $\nablaG(uv) = v \nablaG u + u \nablaG v$ a.e.~and therefore from \eqref{eq:c-div-free} we deduce that
\begin{equation}\label{trilinear-skew}
\int_\G (\bsfc \cdot \nablaG u) v = - \int_\G (\bsfc \cdot \nablaG v) u, \qquad \forall u,v \in \HH^1(\G);
\end{equation}
hence $ \int_\G (\bsfc \cdot \nablaG u) u = 0$.
The well-posedness of \eqref{eq:gsep-weakform-without-b-div-free-conv} for $p=2$ and corresponding estimate \eqref{eq:gsep-apriori-without-b-div-free-conv}), are thus a consequence of Lax--Milgram theorem because the ellipticity condition \eqref{A-ellipticity-Gamma} and Poincaré--Friedrichs inequality in $\HH^1_\#(\G)$ \cite[Lemma 2]{BonitoDemlowNochetto2020} imply that \eqref{eq:gsep-weakform-without-b-div-free-conv} is coercive.

For $p \in (2,\infty)$, the unique solvability of problem \eqref{eq:gsep-weakform-without-b-div-free-conv} and estimate \eqref{eq:gsep-apriori-without-b-div-free-conv} follow from the case $p=2$ and a bootstrapping argument for the integrability of the solution based on \autoref{thm:div-form-wp} (invertibility of the operator $-\divG(\bsfA \nablaG \cdot)$ in $\WW^{1,p}_\#(\G)$). Therefore, the map $f \mapsto u$ is an isomorphism from $(\WW^{1,p^*}_\#(\G))'$ onto $\WW^{1,p}_\#(\G)$.

For $p \in (1,2)$ we argue by duality:
consider the operator $T : \WW^{1,p^*}_\#(\G) \rightarrow (\WW^{1,p}_\#(\G))'$ defined by $\langle Tv,u \rangle  := \int_\G \bsfA^T \nablaG v \cdot \nablaG u - (\bsfc \cdot \nablaG v)u$, which is an isomorphism because of the preceding step for $p^* > 2$.
Consequently, by \autoref{thm:inj-surj-rel} (characterization of injectivity and surjectivity), the adjoint operator $T^\mathrm{t}: \WW^{1,p}_\#(\G) \rightarrow (\WW^{1,p^*}_\#(\G))'$ defined by $\langle T^\mathrm{t} u,v \rangle := \langle Tv,u \rangle$ is also an isomorphism.
Moreover, because of identity \eqref{trilinear-skew} (which can be extended by a density argument to $u \in \WW^{1,p^*}_\#(\G)$ and $v \in \WW^{1,p^*}_\#(\G)$), we can write
\begin{equation*}
\langle T^\mathrm{t} u,v \rangle
= \langle Tv,u \rangle
= \int_\G \bsfA \nablaG u \cdot \nablaG v + (\bsfc \cdot \nablaG u) v,
\end{equation*}
which is the left-hand side of \eqref{eq:gsep-weakform-without-b-div-free-conv}.
This finishes the proof of the well-posedness of \eqref{eq:gsep-weakform-without-b-div-free-conv} for $p \in (1,\infty)$.

\medskip\noindent\textbf{Higher regularity:}
Finally, a bootstrapping argument based upon \autoref{thm:div-form-higreg} ($\WW^{m+2,p}$-regularity for the operator $-\divG(\bsfA \nablaG \cdot)$ on $\G$) akin to the proof of \autoref{thm:gsep-reg} (higher regularity for general scalar elliptic equations) leads to identity \eqref{gsep-a.e.-without-b-div-free-conv} and the a-priori estimate \eqref{eq:gsep-higher-apriori-without-b-div-free-conv}.
We omit further details.
\end{proof}
\end{thm}

\subsection{The biharmonic problem on \texorpdfstring{$\G$}{Γ}}\label{subsec:biharmonic}

The biharmonic problem is instrumental in modeling of plates and Stokes flow (via the stream function formulation).
In addition to having a self-standing interest, we shall also utilize it in \cite[\S6]{BenavidesNochettoShakipov2025-b} to prove higher-regularity estimates for solutions of tangential vector-valued PDEs driven by various definitions of the vector Laplacian.
Consider the following weak formulation for the biharmonic problem $\DeltaG^2 u = f$ on $\Gamma$: given $f \in (\WW^{2,p^*}_\#(\G))'$, find $u \in \WW^{2,p}_\#(\G)$ such that
\begin{equation}\label{eq:biharmonic}
\int_\G \DeltaG u \, \DeltaG v = \langle f, v \rangle, \qquad \forall v \in \WW^{2,p^*}_\#(\G).
\end{equation}
The next two theorems concern the well-posedness and higher regularity of \eqref{eq:biharmonic}.
\begin{thm}[well-posedness of the biharmonic problem] \label{thm:biharmonic-wp}
Let $p \in (1,\infty)$ and let $p^* := \frac{p}{p-1}$ be the dual exponent of $p$.
Assume $\G$ is of class $C^{1,1}$.
Then for every $f \in (\WW^{2,p^*}_\#(\G))'$, there exists a unique $u \in \WW^{2,p}_\#(\G)$ that solves \eqref{eq:biharmonic}.
Moreover, there exists a constant $C>0$, depending only on $\G$ and $p$, such that
\begin{equation*}
\|u\|_{2,p;\G} \leq C \|f\|_{(\WW^{2,p^*}_\#(\G))'}.
\end{equation*}
\end{thm}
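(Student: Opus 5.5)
The plan is to show that $\DeltaG$ is an isomorphism between $\WW^{2,q}_\#(\G)$ and $\LL^q_\#(\G)$ for every $q\in(1,\infty)$. The biharmonic bilinear form in \eqref{eq:biharmonic} is then a composition of isomorphisms, and the inf-sup (Banach--Ne\v{c}as--Babu\v{s}ka) theory of \autoref{app:var-prob} does the rest.

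\textbf{Step 1: $\DeltaG$ is an isomorphism.} Apply \autoref{thm:div-form-higreg} with $\bsfA=\bI$ and $m=0$; this needs $\G$ of class $C^{1,1}$, which we have. It shows that for each $g\in\LL^q_\#(\G)$ there is a unique $w\in\WW^{2,q}_\#(\G)$ with $-\DeltaG w=g$ a.e., and $\|w\|_{2,q;\G}\le C\|g\|_{0,q;\G}$. Conversely, every $w\in\WW^{2,q}(\G)$ has $\DeltaG w\in\LL^q(\G)$ with zero mean. This follows from the integration-by-parts formula \eqref{eq:int-by-parts} with the constant test function, since $\nablaG w$ is tangential. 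Hence $\DeltaG:\WW^{2,q}_\#(\G)\to\LL^q_\#(\G)$ is a continuous bijection. Its inverse is bounded, and so $\|\DeltaG w\|_{0,q;\G}\cong\|w\|_{2,q;\G}$ on $\WW^{2,q}_\#(\G)$.

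\textbf{Step 2: inf-sup conditions.} Let $a(u,v):=\int_\G\DeltaG u\,\DeltaG v$ on $\WW^{2,p}_\#(\G)\times\WW^{2,p^*}_\#(\G)$. Continuity is immediate from H\"older's inequality.

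For the inf-sup condition, fix $u\in\WW^{2,p}_\#(\G)$ and set $g:=|\DeltaG u|^{p-2}\DeltaG u-c$. Here $c$ is the mean of the first term, chosen so that $g\in\LL^{p^*}_\#(\G)$, and $\|g\|_{0,p^*;\G}\lesssim\|\DeltaG u\|_{0,p;\G}^{p-1}$. By Step 1 with $q=p^*$, there is $v\in\WW^{2,p^*}_\#(\G)$ with $\DeltaG v=g$ and $\|v\|_{2,p^*;\G}\lesssim\|\DeltaG u\|^{p-1}_{0,p;\G}$. Since $\DeltaG u$ has zero mean, the constant $c$ drops out, and
\[
a(u,v)=\|\DeltaG u\|_{0,p;\G}^p\gtrsim\|u\|_{2,p;\G}\,\|v\|_{2,p^*;\G}.
\]
An equivalent route is to use the duality $(\LL^p_\#)'\equiv\LL^{p^*}_\#$ together with surjectivity of $\DeltaG$ onto $\LL^{p^*}_\#(\G)$.

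For the second (injectivity) condition, suppose $a(u,v)=0$ for all $u\in\WW^{2,p}_\#(\G)$. As $u$ ranges over this space, $\DeltaG u$ ranges over all of $\LL^p_\#(\G)$, by Step 1 with $q=p$. Hence $\DeltaG v$, which itself has zero mean, is orthogonal to $\LL^p_\#(\G)$. So $\DeltaG v=0$, and therefore $v=0$.

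\textbf{Step 3: conclusion.} Invoking \autoref{thm:inj-surj-rel} (or the Banach--Ne\v{c}as--Babu\v{s}ka theorem in \autoref{app:var-prob}) gives existence, uniqueness and the bound $\|u\|_{2,p;\G}\le C\|f\|_{(\WW^{2,p^*}_\#(\G))'}$.

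As an alternative construction, write $f=\DeltaG^{\mathrm t}$ applied to some $h\in\LL^p_\#(\G)$, which is possible by the adjoint of Step 1 with $q=p^*$, and then solve $\DeltaG u=h$.

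The main obstacle is Step 1, and specifically the zero-mean bookkeeping: checking that $\DeltaG$ maps $\WW^{2,q}_\#(\G)$ onto exactly $\LL^q_\#(\G)$. This relies on \eqref{eq:int-by-parts} for $C^{1,1}$ manifolds, proved earlier, and on \eqref{div-form-a.e.} to identify the weak solution with a strong one. Everything else is functional-analytic routine.
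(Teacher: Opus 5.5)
Your proposal is correct and follows the paper's route. Both arguments use \autoref{thm:div-form-higreg} with $\bsfA=\bI$, $m=0$, which makes $\DeltaG$ an isomorphism from $\WW^{2,q}_\#(\G)$ onto $\LL^q_\#(\G)$; from this they verify the injectivity and inf-sup conditions and conclude with the Banach--Ne\v{c}as--Babu\v{s}ka theorem. Your explicit test function $|\DeltaG u|^{p-2}\DeltaG u - c$ and your zero-mean check via \eqref{eq:int-by-parts} make precise a duality step that the paper writes as the equality $\sup_{w\in\LL^{p^*}_\#(\G)}\int_\G w\,\DeltaG u/\|w\|_{0,p^*;\G}=\|\DeltaG u\|_{0,p;\G}$, which for $p\neq 2$ actually holds only up to a factor of at most $2$.
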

\begin{proof}
Let $v \in \WW^{2,p^*}_\#(\G)$ be such that $\int_\G \DeltaG u \, \DeltaG v = 0$ for each $u \in \WW^{2,p}_\#(\G)$.
First, we observe that due to \autoref{thm:div-form-higreg} ($\WW^{m+2,p}$-regularity of the operator $-\divG(\bsfA \nablaG \cdot)$ on $\G$), this is equivalent to $\int_\G w \, \DeltaG v = 0$ for all $w \in \LL^p_\#(\G)$.
Consequently, we infer that $v = 0$. 

On the other hand, again due to \autoref{thm:div-form-higreg}, we also have
\begin{equation*}
\sup_{\substack{v \in \WW^{2,p^*}_\#(\G)\\ v \neq 0}} \frac{\int_\G \DeltaG u \, \DeltaG v}{\|v\|_{2,p^*;\G}}
\gtrsim
\sup_{\substack{v \in \WW^{2,p^*}_\#(\G)\\ v \neq 0}} \frac{\int_\G \DeltaG u \, \DeltaG v}{\|\DeltaG v\|_{0,p^*;\G}}
= \sup_{\substack{w \in \LL^{p^*}_\#(\G)\\ w \neq 0}} \frac{\int_\G w \, \DeltaG u}{\|w\|_{0,p^*;\G}} = \|\DeltaG u\|_{0,p;\G}
\gtrsim \|u\|_{2,p;\G}.
\end{equation*}
This verifies the conditions of \autoref{thm:BNB} (Banach--Ne\v{c}as--Babu\v{s}ka theorem), thus yielding the asserted well-posedness and a priori estimate.
\end{proof}
We would like to emphasize that $C^2$-regularity of $\G$ is not required for the aforementioned result to hold because it hinges on the $\WW^{2,p}(\G)$-regularity of the Laplace--Beltrami problem that only requires $\G$ to be $C^{1,1}$. For comparison, $\WW^{1,p}(\G)$-regularity for the Laplace--Beltrami problem requires $\G$ to be $C^1$, and would not be true for $\G$ of class $C^{0,1}$ unless $p$ is sufficiently close to $2$.
\begin{thm}[higher regularity for the biharmonic problem] \label{thm:biharmonic-reg}
Let $\G$ be of class $C^{m+2,1}$ for a nonnegative integer $m$ and let $f \in X_{m-1,p}$, where
\begin{equation*}
X_{m-1,p} := 
\begin{cases}
(\WW^{1,p^*}_\#(\G))', & m = 0,\\
\WW^{m-1,p}_\#(\G), & m \geq 1.
\end{cases}
\end{equation*}
Then, the solution $u \in \WW^{2,p}_\#(\G)$ of \eqref{eq:biharmonic} is in fact in $\WW^{m+3}_\#(\G)$ and there exists $C = C(\G, p) > 0$ such that
\begin{equation*}
\|u\|_{m+3,p;\G} \leq C \|f\|_{X_{m-1,p}}.
\end{equation*}
\end{thm}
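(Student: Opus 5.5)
The approach is to factor the biharmonic operator through two Laplace--Beltrami solves and to apply \autoref{thm:div-form-wp} and \autoref{thm:div-form-higreg} (with $\bsfA=\bI$) once to each. Set $w := -\DeltaG u$. By \autoref{thm:biharmonic-wp}, $u \in \WW^{2,p}_\#(\G)$, and the integration-by-parts formula \eqref{eq:int-by-parts-tangential} (valid since $\G$ is $C^{1,1}$) gives $\int_\G w = 0$. Hence $w \in \LL^p_\#(\G)$ with $\|w\|_{0,p;\G}\lesssim\|f\|_{(\WW^{2,p^*}_\#(\G))'}$.

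\textbf{Step 1: $w$ solves a weak second-order problem.} The claim is
\begin{equation*}
\int_\G \nablaG w\cdot\nablaG v = \langle f,v\rangle \qquad \forall v\in \WW^{2,p^*}_\#(\G),
\end{equation*}
understood first in the ultra-weak sense: \eqref{eq:biharmonic} reads $\int_\G w\,\DeltaG v=\langle f,v\rangle$, which is exactly \eqref{eq:dual-problem} with $\bsfA=\bI$ and data $-f$. Now assume $f\in X_{m-1,p}$; in both cases $f$ extends to $(\WW^{1,p^*}_\#(\G))'$. By \autoref{thm:div-form-wp} (with $\G\in C^1$), there is $\tilde w\in \WW^{1,p}_\#(\G)$ with $\int_\G\nablaG\tilde w\cdot\nablaG v=\langle f,v\rangle$ for all $v\in\WW^{1,p^*}_\#(\G)$. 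For $v\in\WW^{2,p^*}_\#(\G)$, integrating by parts via \eqref{eq:int-by-parts-tangential} (using $\nablaG v\in \bW^{1,p^*}_t(\G)$ and density) gives $-\int_\G\tilde w\,\DeltaG v=\langle f,v\rangle$. Uniqueness in \autoref{lem:ultraweak-allp-LB} then yields $w=\tilde w\in\WW^{1,p}_\#(\G)$ with $\|w\|_{1,p;\G}\lesssim\|f\|_{X_{m-1,p}}$.

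\textbf{Step 2: upgrade $w$.} For $m\ge1$ we have $f\in\WW^{m-1,p}_\#(\G)$ and $\G\in C^{m+2,1}\subseteq C^{(m-1)+1,1}$, so \autoref{thm:div-form-higreg} (with index $m-1$ and $\bsfA=\bI$) gives $w\in\WW^{m+1,p}_\#(\G)$ and $\|w\|_{m+1,p;\G}\lesssim\|f\|_{m-1,p;\G}$. For $m=0$, Step 1 already gives $w\in\WW^{1,p}=\WW^{m+1,p}$.

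\textbf{Step 3: upgrade $u$.} Since $u\in\WW^{2,p}_\#(\G)$ and $-\DeltaG u=w$ a.e., integration by parts shows that $u$ is the $\WW^{1,p}_\#$ solution of \eqref{eq:div-form-W1p-weakform} with $\bsfA=\bI$ and data $w$. Applying \autoref{thm:div-form-higreg} with index $m+1$ (this needs $\G\in C^{m+2,1}$, which is exactly the hypothesis) gives $u\in\WW^{m+3,p}_\#(\G)$ with $\|u\|_{m+3,p;\G}\lesssim\|w\|_{m+1,p;\G}\lesssim\|f\|_{X_{m-1,p}}$.

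The main obstacle is Step 1, which identifies the $\LL^p$ function $w=-\DeltaG u$ with the $\WW^{1,p}$ weak solution. This is where the ultra-weak uniqueness of \autoref{lem:ultraweak-allp-LB} and the $C^{1,1}$ integration by parts are essential. A secondary point to check is the zero-mean compatibility: $f$ is only tested against mean-zero functions, and $\DeltaG u$ has zero mean, so no constant ambiguity arises.
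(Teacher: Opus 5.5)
Your proposal is correct and is essentially the paper's argument: both factor $\DeltaG^2$ through two Laplace--Beltrami solves, identify $\DeltaG u$ (up to sign convention) with the $\WW^{m+1,p}_\#(\G)$ solution of a Laplace--Beltrami problem with datum $f$ using the bijectivity of $\DeltaG$ from $\WW^{2,p^*}_\#(\G)$ onto $\LL^{p^*}_\#(\G)$, and finish with \autoref{thm:div-form-higreg} at index $m+1$; you invoke that bijectivity through \autoref{lem:ultraweak-allp-LB}, whereas the paper uses it directly. The only slip is a sign in Step 1: \eqref{eq:biharmonic} gives $-\int_\G w\,\DeltaG v=\langle f,v\rangle$, i.e.\ \eqref{eq:dual-problem} with datum $f$ rather than $-f$, which is what actually yields $w=\tilde w$ (the regularity conclusion would be unaffected either way).
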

\begin{proof}
By virtue of either \autoref{thm:div-form-wp} (invertibility of the operator $-\divG(\bsfA \nablaG \cdot)$ in $\WW^{1,p}_\#(\G)$) if $m=0$ or
\autoref{thm:div-form-higreg} ($\WW^{m+2,p}$-regularity of the operator $-\divG(\bsfA \nablaG \cdot)$ on $\G$) if $m \geq 1$,
there exists a unique $u_f \in \WW^{m+1,p}_\#(\G)$ that solves $\DeltaG u_f = f$ in the sense of \eqref{eq:div-form-W1p-weakform} and $\|u_f\|_{m+1,p;\G} \lesssim \|f\|_{X_{m-1,p}}$.
Consequently, by integrating by parts, problem \eqref{eq:biharmonic} can be equivalently stated as
\begin{equation*}
\int_\G \DeltaG u \; \DeltaG v = -\int_\G \nablaG u_f \cdot \nablaG v = \int_\G u_f \; \DeltaG v, \qquad \forall v \in \WW^{2,p^*}_\#(\G).
\end{equation*}
Again, by \autoref{thm:div-form-higreg}, we infer that $\DeltaG u = u_f$ a.e.~on $\G$ with $u_f \in \WW^{m+1,p}_\#(\G)$.
Therefore, because $\G$ is of class $C^{m+2,1}$, once again by \autoref{thm:div-form-higreg}, we deduce that $u \in \WW^{m+3,p}_\#(\G)$ and
\begin{equation*}
\|u\|_{m+3,p;\G} \lesssim \|u_f\|_{m+1,p;\G} \lesssim \|f\|_{X_{m-1,p}}.
\end{equation*}
This concludes the proof.
\end{proof}
Observe that we only invoked the $\WW^{m,p}(\G)$-regularity as well as the well-posedness in $\WW^{1,p}_\#(\G)$ of the Laplace--Beltrami problem to derive the well-posedness and higher regularity of the biharmonic problem \eqref{eq:biharmonic}.

\section*{Acknowledgements}
The authors are grateful to professor Giuseppe Savaré (Bocconi University), whose incisive observations on a previous version of this manuscript motivated us to streamline the overall arguments in the paper and improve its results.\looseness=-1


\appendix

\section{Variational problems in reflexive Banach spaces}\label{app:var-prob}
In this section we recall results on the well-posedness of variational problems in reflexive Banach spaces.
Classical references on the subject include \cite{ErnGuermond2004,Ciarlet2013,Brezis2011}.
We also mention the recent work \cite{Gatica2024} (in Spanish).

For any continuous linear operator $T: X \rightarrow Y'$ (that is, $T \in \sL(X,Y')$), where $X$ and $Y$ are Banach spaces, we let $T': Y'' \rightarrow X'$ denote its Banach adjoint defined by $T'(\sG) = \sG \circ T$ for each $\sG \in Y''$.
If $Y$ is reflexive, we also define $T^\mathrm{t}: Y \rightarrow X'$ by $T' \circ \cJ_Y$, where $\cJ_Y : Y \rightarrow Y''$ is the canonical embedding of $Y$ onto its bidual $Y''$.
In this case, $T$ and $T^\mathrm{t}$ are related via the identity
\begin{equation}\label{Banach-duality}
\langle T(x), y \rangle_{Y' \times Y} = \langle T^\mathrm{t}(y), x \rangle_{X' \times X}, \qquad \forall x \in X, \, y \in Y.
\end{equation}
The relation between the injectivity and surjectivity of the operators $T$ and $T^\mathrm{t}$ is given by the following result \cite[Section 2.7]{Brezis2011}.
\begin{thm}[characterization of injectivity and surjectivity]\label{thm:inj-surj-rel}
Let $X$ and $Y$ be two Banach spaces with $Y$ reflexive, and consider $T \in \sL(X,Y')$.
Then, the following set of equivalences hold true
\begin{center}
\begin{tabular}{lccccc}
i) & $T$ is surjective & $\Longleftrightarrow$ & \begin{minipage}{3cm} $T^\mathrm{t}$ is injective and has closed range \end{minipage} & $\Longleftrightarrow$ &
\begin{minipage}[c]{5cm} there is $\alpha>0$ such that $$\sup_{\substack{x \in X\\ x \neq 0}} \frac{\langle T(x), y \rangle}{\|x\|} \geq \alpha \|y\|, \quad \forall y \in Y$$
\end{minipage}\\
& & & & & \\
ii) & \begin{minipage}{3cm} $T$ is injective and has closed range \end{minipage} & $\Longleftrightarrow$ & $T^\mathrm{t}$ is surjective & $\Longleftrightarrow$ & \begin{minipage}[c]{3.8cm} there is $\wt\alpha>0$ such that $$\sup_{\substack{y \in Y\\ y \neq 0}} \frac{\langle T(x), y \rangle}{\|y\|} \geq \wt\alpha \|x\|, \quad \forall x \in X$$.
\end{minipage}
\end{tabular}
\end{center}
\end{thm}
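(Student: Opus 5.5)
We prove the two rows of \autoref{thm:inj-surj-rel} separately. Each row rests on the closed range theorem, together with the identification $T^\mathrm{t} = T'\circ\cJ_Y$, where $\cJ_Y$ is an isometric isomorphism because $Y$ is reflexive. The first step is to record the standard facts about annihilators. Let $\operatorname{R}(T)\subseteq Y'$ denote the range of $T$, and use \eqref{Banach-duality}. Then $\ker T^\mathrm{t} = \{y\in Y : \langle Tx,y\rangle = 0\ \forall x\}$, which is the pre-annihilator of $\operatorname{R}(T)$. Likewise, $\ker T$ is the pre-annihilator of $\operatorname{R}(T^\mathrm{t})$ in $X$. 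By reflexivity, closures of subspaces of $Y'$ coincide with their weak-$*$ closures, so $\overline{\operatorname{R}(T)} = (\ker T^\mathrm{t})^\perp$. In addition, the closed range theorem (Brezis, Thm.~2.19) gives that $\operatorname{R}(T)$ is closed if and only if $\operatorname{R}(T')$ is closed. Since $\cJ_Y$ is onto, $\operatorname{R}(T^\mathrm{t}) = \operatorname{R}(T')$, so this is equivalent to $\operatorname{R}(T^\mathrm{t})$ being closed.

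For row i), the first equivalence comes from $\overline{\operatorname{R}(T)} = (\ker T^\mathrm{t})^\perp$ and the closed-range equivalence. If $T$ is surjective, then $\operatorname{R}(T)$ is closed, and $(\ker T^\mathrm{t})^\perp = Y'$ forces $\ker T^\mathrm{t} = \{0\}$ by Hahn--Banach; hence $T^\mathrm{t}$ is injective with closed range. Conversely, if $T^\mathrm{t}$ is injective with closed range, then $\operatorname{R}(T)$ is closed and dense, hence all of $Y'$. For the second equivalence, we rewrite the supremum using \eqref{Banach-duality}: $\sup_{x\neq0}\langle Tx,y\rangle/\|x\| = \|T^\mathrm{t}y\|_{X'}$. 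The inf-sup condition therefore reads $\|T^\mathrm{t}y\|_{X'}\ge\alpha\|y\|$ for all $y$. This bound implies injectivity directly. It also implies closed range: if $T^\mathrm{t}y_n\to\phi$, then $(y_n)$ is Cauchy in $Y$, its limit $y$ satisfies $T^\mathrm{t}y = \phi$ by continuity, and so $\phi\in\operatorname{R}(T^\mathrm{t})$. Conversely, if $T^\mathrm{t}$ is injective with closed range, it is a bijection onto the Banach space $\operatorname{R}(T^\mathrm{t})$. The bounded inverse theorem then gives a bounded inverse, which is exactly the lower bound with $\alpha = \|(T^\mathrm{t})^{-1}\|^{-1}$.

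Row ii) is symmetric, but the roles of $T$ and $T^\mathrm{t}$ must be swapped with care. The same bounded-inverse and Cauchy-sequence argument shows that ``$T$ injective with closed range'' is equivalent to $\|Tx\|_{Y'}\ge\wt\alpha\|x\|$. This norm equals $\sup_{y\neq0}\langle Tx,y\rangle/\|y\|$, because by reflexivity $\|Tx\|_{Y'}$ is attained as a supremum over elements of $Y$; in fact that holds for any normed $Y$ by the definition of the dual norm. For the middle equivalence, $T$ injective with closed range gives $\operatorname{R}(T^\mathrm{t})$ closed and $\operatorname{R}(T^\mathrm{t}) = (\ker T)^\perp = X'$. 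Conversely, if $T^\mathrm{t}$ is surjective, the pre-annihilator relation gives $\ker T = \{0\}$, and closedness of $\operatorname{R}(T^\mathrm{t})$ gives closedness of $\operatorname{R}(T)$.

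The only delicate point is the identity $\operatorname{R}(T^\mathrm{t}) = (\ker T)^\perp$ when it is closed. This is part (b) of the closed range theorem applied to $T'$: $\operatorname{R}(T') = \ker(T'')^{\perp}$ intersected appropriately. It reduces to the statement $\operatorname{R}(T') = (\ker T)^\perp$ in Brezis, once we identify $Y''$ with $Y$, which is where reflexivity is used. I therefore expect the main care to be needed in bookkeeping the canonical embedding $\cJ_Y$, not in any new estimate.
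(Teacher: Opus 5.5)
Your argument is correct and follows essentially the route the paper takes: the paper gives no proof of its own but refers to \cite[Section 2.7]{Brezis2011}, i.e., the closed range theorem and the characterizations of surjectivity of an operator and of its adjoint (Theorems 2.19--2.21 there), transferred to $T^\mathrm{t} = T'\circ\cJ_Y$ exactly as you do. The only blemish is your last paragraph: $\operatorname{R}(T') = (\ker T)^\perp$ for closed range follows from the closed range theorem applied to $T$ itself, so no $T''$ is needed, and reflexivity enters only through $\operatorname{R}(T^\mathrm{t}) = \operatorname{R}(T')$ and $\ker T' = \cJ_Y(\ker T^\mathrm{t})$.
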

A direct consequence of \autoref{thm:inj-surj-rel} is the celebrated Banach--Ne\v{c}as--Babu\v{s}ka theorem.

\begin{thm}[Banach--Ne\v{c}as--Babu\v{s}ka]\label{thm:BNB}
Let $X$ and $Y$ be two Banach spaces with $Y$ reflexive, and consider $T \in \sL(X,Y')$.
Then, $T$ is invertible (and hence $T^{-1} \in \sL(Y',X)$ by the Bounded Inverse Theorem) if and only if one of the following pair of conditions hold

\vspace*{0.5cm}
\begin{minipage}{0.47\textwidth}
\begin{enumerate}
\item[i)]\label{it:cond-i} If $x \in X$ is such that $\langle T(x), y \rangle = 0$ for each $y \in Y$, then $x = 0$.
\item[ii)]\label{it:cond-ii} There is $\alpha>0$ such that $$\sup_{\substack{x \in X\\ x \neq 0}} \frac{\langle T(x), y \rangle}{\|x\|} \geq \alpha \|y\|, \quad \forall y \in Y.$$
\end{enumerate}
\end{minipage}%
\begin{minipage}{0.47\textwidth}
\begin{enumerate}
\item[i)']\label{it:cond-i'} If $y \in Y$ is such that $\langle T(x), y \rangle = 0$ for each $x \in X$, then $y = 0$.
\item[ii)']\label{it:cond-ii'} There is $\wt\alpha>0$ such that $$\sup_{\substack{y \in Y\\ y \neq 0}} \frac{\langle T(x), y \rangle}{\|y\|} \geq \wt\alpha \|x\|, \quad \forall x \in X.$$
\end{enumerate}
\end{minipage}

\noindent Moreover, if $T$ is invertible, then the ``optimal'' constants $\alpha$ and $\wt\alpha$ coincide, that is
\begin{equation*}
\alpha_{\text{opt}} = \inf_{\substack{y \in Y\\ y \neq 0}}\sup_{\substack{x \in X\\ x \neq 0}} \frac{\langle T(x), y \rangle}{\|x\| \|y\|} = \inf_{\substack{x \in X\\ x \neq 0}}\sup_{\substack{y \in Y\\ y \neq 0}} \frac{\langle T(x), y \rangle}{\|x\| \|y\|},
\end{equation*}
and it also holds that
\begin{equation*}
\|T^{-1}\|_{\sL(Y',X)} = \|(T^\mathrm{t})^{-1}\|_{\sL(X',Y)} = \frac{1}{\alpha_{\text{opt}}}.
\end{equation*}
\end{thm}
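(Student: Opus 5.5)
We derive the Banach--Ne\v{c}as--Babu\v{s}ka statement directly from \autoref{thm:inj-surj-rel}, treating the left and right columns of conditions separately.

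\textbf{Left column.} Condition ii) is exactly the third item in part i) of \autoref{thm:inj-surj-rel}, so it is equivalent to $T$ being surjective. Condition i) states that $\ker T=\{0\}$, since $\langle T(x),y\rangle=0$ for all $y\in Y$ means $T(x)=0$ in $Y'$. Hence i)+ii) is equivalent to $T$ being bijective. The Bounded Inverse Theorem then gives $T^{-1}\in\sL(Y',X)$.

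\textbf{Right column.} By \eqref{Banach-duality}, condition i)' states $\ker T^\mathrm{t}=\{0\}$. By part ii) of \autoref{thm:inj-surj-rel}, condition ii)' is equivalent to $T$ being injective with closed range. Part i) of the same theorem, read in the forward direction, says that injectivity of $T^\mathrm{t}$ together with closedness of its range gives surjectivity of $T$. To supply the closed-range hypothesis, I would show $T^\mathrm{t}$ has closed range when $T$ does, via the closed range theorem in \cite[Section 2.7]{Brezis2011}; under reflexivity of $Y$ this is legitimate, since $T^\mathrm{t}=T'\circ\cJ_Y$ with $\cJ_Y$ an isomorphism. Alternatively, I would argue directly: the range of $T$ is closed, and its annihilator in $Y''\cong Y$ is $\ker T^\mathrm{t}=\{0\}$, so by Hahn--Banach the range is dense, hence all of $Y'$. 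The direct route is preferable. Conversely, if $T$ is invertible, then ii)' follows from boundedness of $T^{-1}$, and i)' follows because $T^\mathrm{t}$ is the adjoint of an isomorphism.

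\textbf{Constants.} If $T$ is invertible, then $T^\mathrm{t}\colon Y\to X'$ is an isomorphism with $(T^\mathrm{t})^{-1}=(T^{-1})^\mathrm{t}$ up to the canonical identifications, so $\|(T^\mathrm{t})^{-1}\|=\|T^{-1}\|$. I would then compute the norms directly:
\begin{equation*}
\frac{1}{\|T^{-1}\|}=\inf_{x\neq0}\frac{\|T x\|_{Y'}}{\|x\|}=\inf_{x\neq 0}\sup_{y\neq0}\frac{\langle Tx,y\rangle}{\|x\|\|y\|}.
\end{equation*}
By reflexivity, $\|T^\mathrm{t}y\|_{X'}=\sup_x\langle Tx,y\rangle/\|x\|$, which gives the other inf-sup expression as $1/\|(T^\mathrm{t})^{-1}\|$. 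Equality of the two norms then yields equality of the two inf-sups.

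The main obstacle is the density step in the right column, and that step is where reflexivity is used.
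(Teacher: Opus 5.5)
Your proof is correct and follows the paper's route: the paper simply states that the result is a direct consequence of \autoref{thm:inj-surj-rel}, and your norm computations for the constants are the standard argument it leaves to the cited reference. The density/closed-range detour in your right column is unnecessary, though: part ii) of \autoref{thm:inj-surj-rel} already shows that ii)' makes $T^\mathrm{t}$ surjective. Its range $X'$ is then trivially closed, and together with i)' (injectivity of $T^\mathrm{t}$), part i) gives surjectivity of $T$ immediately.
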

The development of the functional analytical framework presented in \autoref{thm:inj-surj-rel} and \autoref{thm:BNB} is the consequence of decades of research built upon the seminal work by Ne\v{c}as \cite{Necas1961,Necas1962}, and later enriched by Hayden \cite{Hayde1968}, Babu\v{s}ka \cite{Babuska1970}, Babu\v{s}ka and Aziz \cite{BabuskaAziz1972} and Simader \cite{Simader1972};
we refer to \cite{Saito2018} for a thorough discussion on its historical development.
An exhaustive and self-contained derivation of \autoref{thm:BNB} can be found in the classical reference \cite[Chapter 2]{ErnGuermond2004} (see also \cite[Capítulo 5]{Gatica2024}).

\begin{remark}[continuous inverse of a surjective operator]\label{rem:inv-surj}
If $T \in \sL(X,Y')$ is surjective (but not necessarily injective), then there exists a constant $\beta > 0$ such that for $F \in Y'$ there exists $x \in X$ such that $T(x) = F$ and $\|x\| \leq \frac{1}{\beta} \|F\|$.
If $X$ is reflexive, then $\beta$ can be taken as equal to the constant $\alpha$ given by item i) of \autoref{thm:inj-surj-rel}.
\end{remark}

\section{Regularity estimates in flat domains}\label{app:regularity-flat}
This section contains results on higher Sobolev regularity of elliptic PDEs in flat domains, which are essential for our arguments in \autoref{sec:scal-ell}.

The following is a particularization of \cite[Lemma 2]{DolzmannMuller1995}.
\begin{proposition}[well-posedness of general scalar elliptic equations]\label{prop:gen-el-sys}
Let $\Omega$ be an open connected subset of $\RR^d$ with a $C^1$ boundary and let $\bA \in \bbC(\overline{\Omega})$ be uniformly elliptic on $\Omega$, i.e.,~there exists $\Lambda > 0$ such that
\begin{equation*}
\bxi \cdot \bA(\bx) \bxi \geq \Lambda |\bxi|^2, \qquad \forall \xi \in \RR^d, \quad \forall \bx \in \Omega.
\end{equation*}
Moreover, let $p \in (1,\infty)$ and $q \in (1,d)$.
Then, for each $\bf \in \bL^p(\Omega)$ and $f \in \LL^q(\Omega)$, the problem
\begin{equation}\label{eq:gen-el-sys}
\begin{aligned}
-\div(\bA \nabla u) & = \div \bf + f, \qquad \text{in $\Omega$},\\
u & = 0, \qquad \text{on $\partial\Omega$}
\end{aligned}
\end{equation}
has a weak solution $u \in \WW_0^{1,s}(\Omega)$, where $s := \min\{p, \frac{dq}{d-q}\}$.
Moreover, there exists a constant $C>0$ depending only on $\Omega$, $\bA$, $p$ and $q$, such that the following a-priori estimate holds
\begin{equation*}
\|u\|_{1,s;\Omega} \leq C \left( \|\bf\|_{0,p;\Omega} + \|f\|_{0,q;\Omega}  \right).
\end{equation*}
Moreover, $u$ is unique in the sense that if for some $t \in (1,\infty)$ there is a weak solution $\wt u \in \WW_0^{1,t}(\Omega)$ of \eqref{eq:gen-el-sys}, then $u = \wt u$ a.e.
\end{proposition}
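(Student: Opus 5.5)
The statement is Proposition~\ref{prop:gen-el-sys}: the Dirichlet problem \eqref{eq:gen-el-sys} on a flat $C^1$ domain with continuous coefficients. The plan is to reduce everything to the divergence-form datum $\bf$, then prove $\WW^{1,r}$ well-posedness by the classical Calder\'on--Zygmund route: constant coefficients first, then freezing, then duality.

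\textbf{Step 1: reduce to $f=0$.} Since $q<d$, the Sobolev embedding gives $\WW^{1,q^*}_0(\Omega)\hookrightarrow \LL^{(dq/(d-q))^*}(\Omega)$. Hence $v\mapsto\int_\Omega f v$ is a bounded functional on $\WW^{1,s^*}_0(\Omega)$ with norm $\lesssim\|f\|_{0,q;\Omega}$, because $s\le dq/(d-q)$ and $\Omega$ is bounded. By the flat analogue of Proposition~\ref{prop:surj-div} (surjectivity of the divergence onto $(\WW^{1,s^*}_0)'$, proved the same way from Theorem~\ref{thm:inj-surj-rel} and the Poincar\'e inequality), there is $\bg\in\bL^s(\Omega)$ with $\div\bg=f$ weakly and $\|\bg\|_{0,s}\lesssim\|f\|_{0,q}$. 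Replacing $\bf$ by $\bf+\bg\in\bL^s$ reduces the problem to $-\div(\bA\nabla u)=\div\bF$ with $\bF\in\bL^s$. It therefore suffices to prove that $T_r:\WW^{1,r}_0(\Omega)\to(\WW^{1,r^*}_0(\Omega))'$, $u\mapsto-\div(\bA\nabla u)$, is an isomorphism for every $r\in(1,\infty)$.

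\textbf{Step 2: a priori estimate for $r\ge2$.} For $r=2$ this is Lax--Milgram. For constant $\bA$, $\WW^{1,r}$ estimates for $-\div(\bA\nabla u)=\div\bF$ follow from Calder\'on--Zygmund: the map $\bF\mapsto\nabla u$ is a singular integral with kernel $\nabla^2\Gamma_{\bA}$, where $\Gamma_{\bA}$ is the fundamental solution, which after a linear change of variables is the Newtonian potential. Near the boundary, flatten $\partial\Omega$ with a $C^1$ map whose Jacobian is close to the identity on small balls, and use reflection for the half-space problem. With continuous $\bA$ and a $C^1$ boundary, freeze coefficients on balls of radius $\delta$ chosen so that the oscillation of $\bA$ and the deviation of the flattening are below a threshold $\eta$. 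Using a partition of unity, the perturbation terms are absorbed, giving
\[
\|\nabla u\|_{0,r}\lesssim\|\bF\|_{0,r}+\|u\|_{0,r}.
\]
This is the main obstacle. The delicate point is that the perturbation term $(\bA-\bA(x_0))\nabla u$ must be small in $\bL^r$ uniformly, which is where continuity of $\bA$ and $C^1$ regularity of the boundary are used. I would try to borrow the half-space estimate directly rather than reprove it.

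\textbf{Step 3: remove the lower-order term, obtain existence, and treat $r<2$.} The term $\|u\|_{0,r}$ is removed by a compactness--uniqueness argument. Any $u\in\WW^{1,r}_0$ with $T_r u=0$ and $r>2$ lies in $\HH^1_0$ because $\Omega$ is bounded, so $u=0$ by Lax--Milgram. Rellich--Kondrachov then yields $\|u\|_{1,r}\lesssim\|\bF\|_{0,r}$. For existence when $r>2$, approximate $\bF$ by smooth fields, take the $\HH^1_0$ solutions, and pass to the limit using the uniform estimate. This is bootstrapped through intermediate exponents via Sobolev embedding, exactly as in Step~1 of the proof of Theorem~\ref{thm:div-form-wp}. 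For $r<2$, apply the result for $\bA^T$ and $r^*>2$, then transpose via Theorem~\ref{thm:inj-surj-rel}, as in Step~3 of that proof.

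\textbf{Uniqueness.} If $\wt u\in\WW^{1,t}_0$ also solves the problem, then $u-\wt u\in\WW^{1,\min\{s,t\}}_0$ solves the homogeneous problem. Injectivity of $T_{\min\{s,t\}}$ gives $u=\wt u$ a.e.
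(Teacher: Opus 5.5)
The paper does not prove this proposition; it quotes it as a particularization of \cite[Lemma~2]{DolzmannMuller1995}. Your skeleton is the standard one: reduce to a pure divergence datum, apply Calder\'on--Zygmund with frozen coefficients and a flattened boundary, use compactness--uniqueness, and transpose for $r<2$. Step~1, the transposition for $r<2$, and the final uniqueness argument are all correct \emph{once} $T_r$ is known to be an isomorphism for every $r>2$. One minor point: in Step~1 the embedding you need is $\WW^{1,(dq/(d-q))^*}_0(\Omega)\hookrightarrow\LL^{q^*}(\Omega)$, combined with $s^*\ge(dq/(d-q))^*$. The embedding you wrote has the roles reversed, although your conclusion is right.

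\textbf{The gap is surjectivity of $T_r$ for $r>2$}, i.e.\ the fact that the $\HH^1_0$ solution actually lies in $\WW^{1,r}_0$.

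Step~2 yields only an \emph{a priori} estimate. Absorbing $\|(\bA-\bA(x_0))\nabla u\|_{0,r}$ into the left-hand side presupposes that $\|\nabla u\|_{0,r}<\infty$ is already known.

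\emph{Smooth data does not help.} Approximating $\mathbf{F}$ by smooth fields does not supply this finiteness. With $\bA$ merely continuous and $\partial\Omega$ merely $C^1$, there is no reason for the $\HH^1_0$ solution with smooth data to lie in $\WW^{1,r}_0$, so your uniform estimate cannot be applied to the approximants. Smoothing $\bA$ alone would not help either, since flattening a $C^1$ boundary reintroduces merely continuous coefficients.

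\emph{The bootstrap is circular.} You cannot bootstrap ``as in Step~1 of the proof of \autoref{thm:div-form-wp}''. Each step there invokes the present proposition as a \emph{regularity} statement on a flat domain, which is exactly what you are trying to prove.

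\emph{What is missing} is a local regularity lemma, not just an estimate. For example:
\begin{itemize}
\item Let $w=\eta u\in\HH^1$ be supported in a small (half-)ball $B$ on which $|\bA-\bA(x_0)|\le\epsilon$, and suppose $-\div(\bA\nabla w)=\div\mathbf{G}$ with $\mathbf{G}\in\bL^2\cap\bL^r$.
\item Then $w$ is the unique $\dot{\HH}^1$ fixed point of $z\mapsto S_0[\mathbf{G}+1_B(\bA-\bA(x_0))\nabla z]$, where $S_0$ is the constant-coefficient whole- or half-space solution operator.
\item Since $S_0$ is bounded both $\bL^2\to\dot{\HH}^1$ and $\bL^r\to\dot{\WW}^{1,r}$, for small $\epsilon$ this map is a contraction on $\dot{\HH}^1\cap\dot{\WW}^{1,r}$. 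Its fixed point there coincides with $w$, so $\nabla w\in\bL^r$.
\end{itemize}
Localization also creates zeroth-order data such as $\bA\nabla u\cdot\nabla\eta$, which lie only in $\LL^{r_0}$ when $u\in\WW^{1,r_0}$. So this lemma must be iterated from $r_0=2$ upward through Sobolev exponents, now with genuine regularity gained at every step.

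An alternative is to approximate both $\bA$ and $\Omega$ by smooth ones while keeping all constants, including those of the compactness step, uniform. The other option is to cite the literature, as the paper does.
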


\begin{remark}[Meyers]\label{app:rem1}
From \cite[Remark 2]{DolzmannMuller1995} and \cite[Theorem 1]{Meyers1963} we know that if $\Omega$ is only of class $C^{0,1}$ and $\bA$ only lies $\bL^\infty(\Omega)$, then there exists $\varepsilon>0$, depending only on $\Omega$ and $\bA$ such that, \autoref{prop:gen-el-sys} also holds as long as $p,s \in (2-\varepsilon,2+\varepsilon)$.
\end{remark}

The following result concerning the higher regularity of solutions to \eqref{eq:gen-el-sys} is an easy consequence of the $\LL^p$-based theory found in \cite[Chapter 9]{GilbargTrudinger2001}.
For completeness, we provide a short proof below.
\begin{proposition}[higher regularity]\label{app:prop-hi-reg}
In addition to the assumptions of \autoref{prop:gen-el-sys}, assume there exists $m \in \NN$ such that $\Omega$ is of class $C^{m,1}$, $\bA \in \bbC^{m-1,1}(\overline\Omega)$, $\bf \in \bW^{m,p}(\Omega)$ and $f \in \WW^{m,q}(\Omega)$.
Then, $u \in \WW^{m+1,s}(\Omega)$ and
\begin{equation}\label{eq:app-hig-est}
\|u\|_{m+1,s;\Omega} \leq \wt C \left( \|\bf\|_{m,p;\Omega} + \|f\|_{m,q;\Omega}  \right),
\end{equation}
where $s: = \min\{p, \frac{dq}{d-q}\}$ and $\wt C >0$ depends only on $\Omega$, $\bA$, $p$ and $q$.
\begin{proof}
We first notice that for $u \in \WW^{2,s}(\Omega)$ we can write $\div(\bA \nabla u) = \bA : \nabla^2 u + \div(\bA^T) \cdot \nabla u = : Lu$, with $\div(\bA^T)$ lying in $\bL^\infty(\Omega)$ if $m=1$ or $\bC^{m-2,1}(\overline{\Omega})$ if $m \geq 2$.
Consequently, it follows from \cite[Theorem 9.15]{GilbargTrudinger2001} for $m=1$ and \cite[Theorem 9.19]{GilbargTrudinger2001} for $m \geq 2$ that $L$ is a linear and continuous bijection from $\WW^{m+1,s}(\Omega) \cap \WW^{1,s}_0(\Omega)$ onto $\WW^{m-1,s}(\Omega)$.
By the Bounded Inverse Theorem we deduce that $\|L^{-1} g\|_{m+1,s;\Omega} \lesssim \|g\|_{m-1,s;\Omega}$ for each $g \in \WW^{m-1,s}(\Omega)$.
In particular, choosing $g := -(\div \bf + f)$, which satisfies $\|g\|_{m-1,s;\Omega} \lesssim \|\bf\|_{m,s;\Omega} + \|f\|_{m-1,s;\Omega} \lesssim \|\bf\|_{m,p;\Omega} + \|f\|_{m,q;\Omega}$, implies that the solution $u$ of \eqref{eq:gen-el-sys} is in fact in $\WW^{m+1,s}(\Omega)$ and satisfies the a-priori estimate \eqref{eq:app-hig-est}.
\end{proof}
\end{proposition}

\bibliographystyle{abbrv}
\bibliography{references}
\end{document}